\newcommand{\ind}{I}  % Indicator
\newcommand{\mv}{v}
\newcommand{\mx}{x}
\newcommand{\my}{y}
\newcommand{\mX}{X}
\newcommand{\mY}{Y}
\newcommand{\cI}{\mathcal I}
\newcommand{\cF}{\mathcal F}
\newcommand{\cT}{\mathcal T}
\newcommand{\cX}{\mathcal X}
\newcommand{\fA}{A}
\newcommand{\fI}{I}
\newcommand{\mc}{\:\!\mathrm{c}}
\newcommand{\ac}{\:\!\mathrm{ac}}
\newcommand{\cFD}{\cF^{\ac}}
\def\pr{{\mathrm{pr}}}
\def\E{{  E}}
\def\T{{ \mathrm{\scriptscriptstyle T} }}
\def\d{{\mathrm d}}
\newcommand{\sfi}{\mathsf{i}}
\newcommand{\cov}{\mathrm{cov}}
\newcommand{\Cov}{\mathrm{cov}}
\newcommand{\var}{\mathrm{var}}
\newcommand{\Var}{\mathrm{var}}
\newcommand{\TV}{ {TV}}
\newcommand{\HL}{ {H\!L}}
\newcommand\yestag{\addtocounter{equation}{1}\tag{\theequation}}
\newcolumntype{L}[1]{>{\raggedright\let\newline\\\arraybackslash\hspace{0pt}}m{#1}}
\newcolumntype{C}[1]{>{  \centering\let\newline\\\arraybackslash\hspace{0pt}}m{#1}}
\newcolumntype{R}[1]{>{ \raggedleft\let\newline\\\arraybackslash\hspace{0pt}}m{#1}}
\newcolumntype{d}[1]{D{.}{.}{#1}}
\newcolumntype{H}{>{\setbox0=\hbox\bgroup}c<{\egroup}@{}}
\newcolumntype{Z}{>{\setbox0=\hbox\bgroup}c<{\egroup}@{\hspace*{-\tabcolsep}}}
\newtheorem{theorem}{Theorem}%[section]
\newtheorem{proposition}{Proposition}%[section]
\newtheorem{assumption}{Assumption}%[section]
\theoremstyle{definition}
\newtheorem{definition}{Definition}%[section]
\newtheorem{example}{Example}%[section]
\newtheorem{remark}{Remark}%[section]
\let\hat\widehat
\let\tilde\widetilde
\newcommand{\nb}[1]{{\color{black}{#1}}}
\begin{document}

\setlength{\abovedisplayskip}{5pt}
\setlength{\belowdisplayskip}{5pt}
\setlength{\abovedisplayshortskip}{5pt}
\setlength{\belowdisplayshortskip}{5pt}
\hypersetup{colorlinks,breaklinks,urlcolor=blue,linkcolor=blue}

\title{\LARGE On the power of Chatterjee's rank correlation}

\author{
Hongjian Shi\thanks{Department of Statistics, University of Washington, Seattle, WA 98195, USA; e-mail: {\tt hongshi@uw.edu}},~~
Mathias Drton\thanks{Department of Mathematics, Technical University
  of Munich, 85748 Garching b.~M\"unchen, Germany; e-mail: {\tt mathias.drton@tum.de}},~~and~
Fang Han\thanks{Department of Statistics, University of Washington, Seattle, WA 98195, USA; e-mail: {\tt fanghan@uw.edu}}.
}

\date{}

\maketitle

%\vspace{-2em}

\begin{abstract}
  \citet{chatterjee2020new} introduced a simple new rank correlation
  coefficient that has attracted much recent attention.  The
  coefficient has the unusual appeal that it not only estimates a
  population quantity {first proposed by \citet{MR3024030}} 
  that is zero if and only if the underlying pair
  of random variables is independent, but also is
  asymptotically normal under independence.  This paper compares
  Chatterjee's new {correlation} coefficient to three established rank correlations
  that also facilitate consistent tests of independence, namely,
  Hoeffding's $D$, Blum--Kiefer--Rosenblatt's $R$, and
  Bergsma--Dassios--Yanagimoto's $\tau^*$.  We contrast their
  computational efficiency in light of recent advances, and
  investigate their power against local {rotation and mixture} alternatives.  
  Our main results show that Chatterjee's coefficient is unfortunately rate
  sub-optimal compared to $D$, $R$, and $\tau^*$. { The situation is
  more subtle for a related earlier estimator of
  \citet{MR3024030}.}   
  These results favor $D$, $R$, and $\tau^*$ over
  Chatterjee's new {correlation} coefficient for the purpose of testing
  independence.
\end{abstract}

{\bf Keywords:} 
Dependence measure; Independence test; Le~Cam's third
lemma; Rank correlation; Rate-optimality.

\section{Introduction}\label{sec:intro}

%\fbox{revise the whole paper based on the rule ``$[\{($''}

Let $X^{(1)},X^{(2)}$ be two real-valued random variables 
defined on %the same (otherwise unspecified) probability space.
a common probability space.
%and neither is almost surely a constant. 
{We will be} %This paper is
concerned with testing the null hypothesis
\begin{align}\label{eq:H0}
H_0: X^{(1)}\text{ and }X^{(2)}\text{ are independent},
\end{align}
based on a sample from the joint distribution
% realizations $(X^{(1)}_1,X^{(2)}_1),\ldots,(X^{(1)}_n,X^{(2)}_n)$
of $(X^{(1)},X^{(2)})$.
This classical problem has seen
% received
% much attention, with many early contributions but
%also
revived interest in recent years as independence tests constitute a
key component in modern statistical methodology such as, e.g., methods
for causal discovery \citep[Section~18.6.3]{MR3889064}.

% In this paper we will focus on the family that both $X^{(1)}$ and $X^{(2)}$ are (absolutely) continuous. 
%Early attempts
%can be traced back to 
%since \citet{Pearson1895}.

{The problem of testing independence has been examined from a
  number of different perspectives; see, for example, the work of
\citet{meynaoui2019adaptive}, \cite{berrett2020optimal}, and
\cite{kim2020minimax}, and the references therein.  In this paper, our} 
 focus will be on testing $H_0$ via rank correlations that measure ordinal
association. Rank correlations are
particularly attractive for continuous distributions for which 
they are distribution-free under $H_0$.  Early
proposals of rank correlations include the widely-used $\rho$ of
\cite{Spearman1904} and $\tau$ of \cite{Kendall1938}, but also the footrule %$\phi$
of \cite{Spearman1906}, the $\gamma$
of \cite{gini1914ammontare}, and the $\beta$ of \cite{MR39190}.
% among many others.
%A detailed historical review can be found in 
%\citet{MR100941} and \citet{MR2738876}.
Unfortunately,
all five of these rank correlations fail to give a
consistent test of independence.  Indeed, each correlation coefficient consistently estimates a population correlation measure
that takes the same value under $H_0$ and certain fixed alternatives to $H_0$.  This fact leads to 
trivial power at such alternatives.
%cannot be rejected even with an infinite sample. 
%For example, all the five rank correlations (on the population level) are zero
%or $X^{(2)} = (X^{(1)})^2$ and $X^{(1)}$ symmetric about $0$. 
%In comparison,

% To fix it, a consistent correlation measure, which is 0 if and only if the random variables $X^{(1)}$ and $X^{(2)}$ are independent, is needed. 
In order to arrive at a consistent test of independence,
\citet{MR0029139} proposed a correlation measure that, for
absolutely continuous bivariate distributions, vanishes if and only if
$H_0$ holds.
% U-statistic estimator.
\citet{MR0125690} considered a modification that
is
consistent against all dependent bivariate alternatives 
\citep[cf.][]{MR0004426}.   \citet{MR3178526} proposed a
new test of independence and showed its consistency for bivariate
distributions that are discrete, absolutely continuous, or a mixture
of both types.  As pointed out by \cite{MR4185806}, mere
continuity of the marginal distribution functions is sufficient for
consistency of their test.  This follows from a relation discovered by
\citet{yanagimoto1970measures} who implicitly considers the correlation of \citet{MR3178526} when proving a
conjecture of \citet{MR0029139}.

All three aforementioned correlation measures admit natural efficient
estimators in the form of U-statistics that depend only on ranks.
However, in each case, the U-statistic is degenerate and has a
non-normal asymptotic distribution under $H_0$.  In light of this
fact, it is interesting that \citet{MR3024030} were able to construct
a consistent correlation measure $\xi$ 
{which is also able to detect perfect functional dependence}
(see also \citealp{MR3784121}) %with a  rank-based estimator.  
%that is asymptotically normal under the null.  
{and in} a recent paper that received much attention,
\citet{chatterjee2020new} gives a very simple rank correlation, with no tuning parameter involved, that 
surprisingly estimates $\xi$ and {has} an asymptotically normal
null distribution.

This paper  compares Chatterjee's and also
Dette--Siburg--Stoimenov's rank correlation coefficients to the three
obvious competitors given by the $D$ of \citet{MR0029139}, the $R$ of
\citet{MR0125690}, and the $\tau^*$ of \citet{MR3178526}.  Our
comparison considers  three  criteria:
\begin{enumerate}[label=(\roman*)]
\item {\it Statistical consistency of the independence test.}  A correlation measure $\mu$
  assigns to each joint distribution of $(\mX^{(1)},\mX^{(2)})$ a real
  number $\mu(\mX^{(1)},\mX^{(2)})$.  Such a correlation measure is
  consistent in a family of distributions $\cF$ if for all pairs
  $(\mX^{(1)},\mX^{(2)})$ with joint distribution in $\cF$, it holds that
  $\mu(\mX^{(1)},\mX^{(2)})=0$ if and only if $\mX^{(1)}$ is
  independent of $\mX^{(2)}$. %; compare \citet{MR3842884}.
Correlation measures that are consistent within a large nonparametric
family are able to detect non-linear, non-monotone relationship, and facilitate consistent tests of independence.
If a correlation measure $\mu$ is consistent, then the consistency of
tests of independence based on an estimator $\mu_n$ of $\mu$ is
guaranteed by the consistency of that estimator.
\item {\it Computational efficiency.}  Computing ranks requires
  $O(n\log n)$ time.  With a view towards large-scale applications, we
  prioritize rank correlation coefficients that are computable
  without much additional effort, that is, also in
  $O(n\log n)$ time.  This is easily seen to be the case
  for Chatterjee's coefficient but, as we shall survey in
  Section~\ref{sec:prelim}, recent advances clarify that $D$, $R$, and
  $\tau^*$ can be computed similarly efficiently.
\item {\it Statistical efficiency of the independence test.}
  Our final criterion is optimal efficiency in the statistical sense
  \citep[Section~5.4]{MR1335235}.  To assess this, we use different
  local alternatives inspired from work of \citet{MR79384} and of
  \citet{MR119312,MR124108}; the latter type of alternatives was
  further developed in \citet{MR3466185}.  We then call an
  independence test \emph{rate-optimal} (or \emph{rate sub-optimal}) against a family
  of local alternatives if within this family the test achieves the
  detection boundary up to constants  (or not).
\end{enumerate}

The main contribution of this paper pertains to statistical
efficiency.  { Chatterjee's derivation of asymptotic normality for his
rank correlation coefficient relies on a reformulation of his statistic and then invoking a type of permutation central limit theorem that was established in \citet{chao_bai_liang_1993}. We found that a direct use of this technique to analyse the local power is hard.} In recent
related work we were able to overcome a similar issue in a related
multivariate setting \citep{shi2019distribution,deb2019multivariate}
by developing a suitable H\'ajek representation theory
\citep{shi2020rate}. Applying this philosophy here, we { build a particular form of the projected statistic that was introduced in \citet{MR1378827}} to provide an alternative proof of Theorem~2.1 in
\citet{chatterjee2020new} that gives an asymptotic representation. 
Integrating the representation into Le Cam's third lemma and employing
further a version of {the} conditional multiplier central limit theorem 
(cf. Chapter~2.9 in \citealp{MR1385671}), we are then
able to show that the test based on Chatterjee's rank correlation
coefficient is in fact \emph{rate sub-optimal} against the {two}
considered local alternative families; recall point (iii) above.  Our theoretical analysis thus echos Chatterjee's empirical observation,
that is, his test of independence can suffer from low power; 
see Remark~\ref{rmk:morealter} below.
%Furthermore, we are able to show that this rate sub-optimality also
%occurs for the test based on Dette--Siburg--Stoimenov's coefficient,
%but interestingly for this test the specific form of the alternative
%considered here matters.  
In contrast, the tests based on the more
established coefficients $D$, $R$, and $\tau^*$ are all rate-optimal
for all considered local alternative families.  We therefore consider the latter
more suitable for testing independence
{than Chatterjee's test. On the other hand, the test based on Dette--Siburg--Stoimenov's coefficient is empirically observed to have non-trivial power against certain alternatives in finite-sample simulations. A theoretical study of this phenomenon, however, has to be left to the future due to involved technical difficulties.}
%
%Our analysis of statistical efficiency is presented in
%Section~\ref{sec:subopt}.  This analysis is preceded in Section~\ref{sec:prelim}
%by a discussion of the consistency of the different tests we consider as
%well as a survey of recent advances that facilitate efficient
%computation.  Our focus in Sections~\ref{sec:prelim}--\ref{sec:subopt}
%is on continuous distributions, but we present some results of cases
%where ties among ranks may exist nonvanishing probability in
%Section~\ref{sec:newtie}. 
%{Numerical results of the finite-sample performances of all the tests
%are presented in Section~\ref{sec:sim}.}
%A summary of the whole paper, 
%along with some other discussions, is given in Section~\ref{sec:dis}.
The proofs of our claims, including details on examples, are given in 
{the supplementary material}. %Section~\ref{sec:proofs}.

% To complete the story, this paper develops further a complementary comparison, illustrating advantages and disadvantages between Chatterjee's, Dette--Siburg--Stoimenov's, Hoeffding's, Blum--Kiefer--Rosenblatt's, and Bergsma--Dassios--Yanagimoto's rank correlation coefficients, with regard to both continuous and discontinuous cases. 
% In the  continuous case, we claim that all the rank correlation coefficients except Dette--Siburg--Stoimenov's one can be computed in nearly linear time,  
% and all the rank correlation measures perform equally well regarding  consistency, except for Hoeffding's~$D$ that requires joint absolute continuity to achieve consistency. 
% %On the other hand, when ties exist with a nonvanishing probability, Chatterjee's rank correlation coefficient has its own advantages in terms of computational efficiency and consistency. 
% Further discussions on the properties of rank correlations when ties exist with a nonvanishing probability are put in Section~\ref{sec:dis}. 

As we were completing the manuscript, we became aware of independent
work by \citet{cao2020correlations}, who accomplished a similar local
power analysis for Chatterjee's %and Dette--Siburg--Stoimenov's rank
correlation coefficient and presented {a} result that {is} similar to our
Theorem~\ref{thm:power}, Claim~\eqref{eq:subopt}. %and \eqref{eq:subopt-dette}. 
The local alternatives considered in their
paper are, however, different from ours. 
%For instance,
%Dette--Siburg--Stoimenov's rank correlation coefficient cannot
%provide a test of power differentiating the null from the local
%alternatives considered in \citet{cao2020correlations}, while such an
%alternative (family (C) in Section~\ref{sec:subopt})
%introduced in \citet[Sec.~3]{MR3466185} %where Siburg--Stoimenov's rank correlation coefficient is powerful was identified in our results. 
In addition, the two papers differ in
their focus.  The work of \citeauthor{cao2020correlations} concentrates on correlation
measures that are 1 if and only if one variable is a
shape-restricted function of the other variable, while our interest
is in comparing consistent tests of independence.

\section{Rank correlations and independence tests}\label{sec:prelim}

\subsection{Considered rank correlations and their computation}

When considering correlations, we will use the term \emph{correlation
  measure} to refer to population quantities, which we write using
Greek or Latin letters.  The term
\emph{correlation coefficient} is reserved for sample
quantities, which are written with an added subscript $n$.
% the will
% are used to represent population quantities (\emph{correlation
%   measures}), and we use the corresponding ones with a subscript $n$
% to represent the sample analogues (\emph{correlation coefficients}).
% %We use the terms \emph{correlation coefficient} and \emph{correlation} in short to refer to either correlation measure or statistic. 
The symbol $F$ denotes a joint bivariate distribution function for
the considered pair of random variables  $(X^{(1)},X^{(2)})$, and
$F_1$ and $F_2$ are the respective marginal distribution functions.
Throughout,
%we assume that neither $X^{(1)}$ nor $X^{(2)}$ is  almost surely constant, 
$(X^{(1)}_1,X^{(2)}_1),\dots,(X^{(1)}_n,X^{(2)}_n)$ is a sample
comprised of $n$ independent copies of $(X^{(1)},X^{(2)})$.

We now introduce in precise terms the five types of rank correlations
we consider in this paper.  We begin by specifying the correlation
measure and coefficients from \citet{chatterjee2020new} and
\citet{MR3024030}.  To this end, let
$(X^{(1)}_{[1]},X^{(2)}_{[1]}),\dots,(X^{(1)}_{[n]},X^{(2)}_{[n]})$ be
a rearrangement of the sample such that
$X^{(1)}_{[1]}\le\cdots\le X^{(1)}_{[n]}$, with ties, if existing,
broken at random.  Define
\begin{equation}
  \label{eq:r[i]}
  r_{[i]}\equiv\sum_{j=1}^{n}\ind\Big(X^{(2)}_{[j]}\le
  X^{(2)}_{[i]}\Big)
\end{equation}
 %is the rank of $X^{(2)}_{[i]}$ among $X^{(2)}_{[1]},\dots,X^{(2)}_{[n]}$,
with $I(\cdot)$ representing the indicator function, and
$\ell_{[i]}\equiv\sum_{j=1}^{n}\ind(X^{(2)}_{[j]}\ge X^{(2)}_{[i]})$.
We emphasize that if $F_2$ is continuous, then there are almost surely
no ties among $X^{(2)}_{1},\dots,X^{(2)}_{n}$, in which case $r_{[i]}$
is simply the rank of $X^{(2)}_{[i]}$ among
$X^{(2)}_{[1]},\dots,X^{(2)}_{[n]}$.

\begin{definition}%[Chatterjee's $\xi_n$]
\label{eg:cha}
The correlation coefficient of \citet{chatterjee2020new} is
\begin{equation}
  \label{eq:chatt:ties}
\xi_n\equiv1-\frac{n\sum_{i=1}^{n-1}\lvert r_{[i+1]}-r_{[i]}\rvert}{2\sum_{i=1}^{n}\ell_{[i]}(n-\ell_{[i]})}.
\end{equation}
If there are no ties among $X^{(2)}_{1},\dots,X^{(2)}_{n}$, it holds
that
\[
\xi_n =1-\frac{3\sum_{i=1}^{n-1}\lvert r_{[i+1]}-r_{[i]}\rvert}{n^2-1}.
\]
\citet{chatterjee2020new} proved that $\xi_n$ estimates the correlation measure
\[\xi\equiv\frac{\int\var[\E\{\ind(X^{(2)}\ge x)\mid X^{(1)}\}] \d F_2(x)}{\int\var\{\ind(X^{(2)}\ge x)\} \d F_2(x)}.\]
This measure was in fact first proposed in \citet{MR3024030}; cf.~$r(X,Y)$ in their
Theorem~2.  We thus term $\xi$ the
Dette--Siburg--Stoimenov's rank correlation measure.
\end{definition}

We note that $\xi$ was also considered by
\citet{MR3784121}; see the Cram\'er--von Mises index $S^{v}_{2,CVM}$
before their Properties
3.2.  For estimation of $\xi$, \citet{MR3024030} proposed the
following coefficient; denoted $\widehat r_n$ in their Equation~(15).

\begin{definition}%[Dette--Siburg--Stoimenov's $\xi_n^*$]
  \label{def:dette}
  Let $K$ be a symmetric and twice continuously differentiable kernel
  with compact support, and let
  $\overline K(x)\equiv \int_{-\infty}^{x}K(t)\d t$.  Let $h_1, h_2>
  0$ be
  bandwidths that are chosen such that they tend to zero with
  \[nh_1^3\to\infty, ~~~
    nh_1^4\to 0, ~~~
    nh_2^4\to 0, ~~~
    nh_1h_2\to\infty
  \yestag\label{eq:tuning}\]
  as $n\to\infty$.  Define
\begin{align}\label{eq:dette1}
\zeta_n\big(u^{(1)},u^{(2)}\big)\equiv \frac{1}{nh_1}\sum_{i=1}^{n}K\Big(\frac{u^{(1)}-i/n}{h_1}\Big)\overline K\Big(\frac{u^{(2)}-r_{[i]}/n}{h_2}\Big)
\end{align}
with $r_{[i]}$ as in~\eqref{eq:r[i]}.  Then the {Dette--Siburg--Stoimenov's}
 correlation coefficient is
\[\xi_n^{*}\equiv 6\int_{0}^{1}\int_{0}^{1}\big\{\zeta_n\big(u^{(1)},u^{(2)}\big)\big\}^2\d u^{(1)}\d u^{(2)} -2.\]
\end{definition}

%In comparison to Chatterjee's rank correlation, we consider the following ones:
%Hoeffding's $D$ \citep{MR0029139}, 
%Blum--Kiefer--Rosenblatt's $R$ \citep{MR0125690}, 
%and Bergsma--Dassios--Yanagimoto's $\tau^*$ \citep{MR3178526,yanagimoto1970measures}.

Next we introduce two classical rank correlations of
\cite{MR0029139} and \cite{MR0125690}, both of which assess dependence
in a very intuitive way by integrating squared deviations between the
joint distribution function and the product of the marginal
distribution functions. 

\begin{definition}%[Hoeffding's $D$]
\label{eg:hd}
Hoeffding's correlation measure is defined as
\[
  D\equiv\int
  \Big\{F\big(x^{(1)},x^{(2)}\big)-F_1\big(x^{(1)}\big)F_2\big(x^{(2)}\big)\Big\}^2
  \d F\big(x^{(1)},x^{(2)}\big).
\]
It is unbiasedly estimated by the correlation coefficient 
\begin{align*}
 D_n\equiv\;&\frac{1}{n(n-1)\cdots(n-4)}\sum_{i_1\ne \dots\ne i_5}\frac14\\
&\!\!\!\!\!\!\Big[\Big\{\ind\Big(X^{(1)}_{i_1}\leq X^{(1)}_{i_5}\Big)-\ind\Big(X^{(1)}_{i_2}\leq X^{(1)}_{i_5}\Big)\Big\}
         \Big\{\ind\Big(X^{(1)}_{i_3}\leq X^{(1)}_{i_5}\Big)-\ind\Big(X^{(1)}_{i_4}\leq X^{(1)}_{i_5}\Big)\Big\}\Big]\\
&\!\!\!\!\!\!\Big[\Big\{\ind\Big(X^{(2)}_{i_1}\leq X^{(2)}_{i_5}\Big)-\ind\Big(X^{(2)}_{i_2}\leq X^{(2)}_{i_5}\Big)\Big\}
         \Big\{\ind\Big(X^{(2)}_{i_3}\leq X^{(2)}_{i_5}\Big)-\ind\Big(X^{(2)}_{i_4}\leq X^{(2)}_{i_5}\Big)\Big\}\Big],
\yestag\label{eq:Dn}
\end{align*}
which is a rank-based U-statistic of order $5$. 
\end{definition}

\begin{definition}%[Blum--Kiefer--Rosenblatt's $R$]
\label{eg:bkr}
Blum--Kiefer--Rosenblatt's correlation measure is defined as
\[
R\equiv\int \Big\{F\big(x^{(1)},x^{(2)}\big)-F_1\big(x^{(1)}\big)F_2\big(x^{(2)}\big)\Big\}^2 
\d F_1\big(x^{(1)}\big)\d F_2\big(x^{(2)}\big).
\]
It is unbiasedly estimated by the {Blum--Kiefer--Rosenblatt's} correlation coefficient 
\begin{align*}
 R_n\equiv\;&\frac{1}{n(n-1)\cdots(n-5)}\sum_{i_1\ne \dots\ne i_6}\frac14\\
&\!\!\!\!\!\!\Big[\Big\{\ind\Big(X^{(1)}_{i_1}\leq X^{(1)}_{i_5}\Big)-\ind\Big(X^{(1)}_{i_2}\leq X^{(1)}_{i_5}\Big)\Big\}
         \Big\{\ind\Big(X^{(1)}_{i_3}\leq X^{(1)}_{i_5}\Big)-\ind\Big(X^{(1)}_{i_4}\leq X^{(1)}_{i_5}\Big)\Big\}\Big]\\
&\!\!\!\!\!\!\Big[\Big\{\ind\Big(X^{(2)}_{i_1}\leq X^{(2)}_{i_6}\Big)-\ind\Big(X^{(2)}_{i_2}\leq X^{(2)}_{i_6}\Big)\Big\}
         \Big\{\ind\Big(X^{(2)}_{i_3}\leq X^{(2)}_{i_6}\Big)-\ind\Big(X^{(2)}_{i_4}\leq X^{(2)}_{i_6}\Big)\Big\}\Big],
\yestag\label{eq:Rn}
\end{align*}
which is a rank-based U-statistic of order $6$. 
\end{definition}

More recently, \citet{MR3178526} introduced the following rank
correlation, which is connected to work by
\citet{yanagimoto1970measures}.  We refer the reader to
\citet{MR3178526} for a motivation via con-/disconcordance of 4-point patterns and connections to Kendall's tau.

\begin{definition}%[Bergsma--Dassios--Yanagimoto's $\tau^*$]
  \label{eg:bdy}
  Write $\ind(x_1,x_2<x_3,x_4)\equiv
  \ind(\max\{x_1,x_2\}<\min\{x_3,x_4\})$.  The
  Bergsma--Dassios--Yanagimoto's correlation measure is
  \begin{align*}
\tau^*\equiv\;
& 4\pr\Big(X^{(1)}_{1},X^{(1)}_{3}< X^{(1)}_{2},X^{(1)}_{4}\ ,\ 
           X^{(2)}_{1},X^{(2)}_{3}< X^{(2)}_{2},X^{(2)}_{4}\Big)\\
&+4\pr\Big(X^{(1)}_{1},X^{(1)}_{3}< X^{(1)}_{2},X^{(1)}_{4}\ ,\ 
           X^{(2)}_{2},X^{(2)}_{4}< X^{(2)}_{1},X^{(2)}_{3}\Big)\\
&-8\pr\Big(X^{(1)}_{1},X^{(1)}_{3}< X^{(1)}_{2},X^{(1)}_{4}\ ,\ 
           X^{(2)}_{1},X^{(2)}_{4}< X^{(2)}_{2},X^{(2)}_{3}\Big).
\end{align*}
It is unbiasedly estimated by a U-statistic of order $4$, namely, the
{Bergsma--Dassios--Yanagimoto's} correlation coefficient
\begin{align*}
 \tau^*_n\equiv\;&\frac{1}{n(n-1)(n-2)(n-3)}\sum_{i_1\ne \dots\ne i_4}\\
&   \Big\{\ind\Big(X^{(1)}_{i_1},X^{(1)}_{i_3}< X^{(1)}_{i_2},X^{(1)}_{i_4}\Big)
         +\ind\Big(X^{(1)}_{i_2},X^{(1)}_{i_4}< X^{(1)}_{i_1},X^{(1)}_{i_3}\Big)\\
&        -\ind\Big(X^{(1)}_{i_1},X^{(1)}_{i_4}< X^{(1)}_{i_2},X^{(1)}_{i_3}\Big)
         -\ind\Big(X^{(1)}_{i_2},X^{(1)}_{i_3}< X^{(1)}_{i_1},X^{(1)}_{i_4}\Big)\Big\}\\
&   \Big\{\ind\Big(X^{(2)}_{i_1},X^{(2)}_{i_3}< X^{(2)}_{i_2},X^{(2)}_{i_4}\Big)
         +\ind\Big(X^{(2)}_{i_2},X^{(2)}_{i_4}< X^{(2)}_{i_1},X^{(2)}_{i_3}\Big)\\
&        -\ind\Big(X^{(2)}_{i_1},X^{(2)}_{i_4}< X^{(2)}_{i_2},X^{(2)}_{i_3}\Big)
         -\ind\Big(X^{(2)}_{i_2},X^{(2)}_{i_3}< X^{(2)}_{i_1},X^{(2)}_{i_4}\Big)\Big\}.
\yestag\label{eq:taustarn}
\end{align*}
\end{definition}

\begin{remark}[Relation between $D_n$, $R_n$, and $\tau^*_n$] \label{remark:identity}
As conveyed by Equation~(6.1) in \cite{MR4185806}, as long as $n\ge 6$ and there are
no ties in the data, it holds that $12D_n+24R_n=\tau^*_n$.
Consequently, $12D+24R=\tau^*$ given continuity but not necessarily
absolute continuity of $F$; compare page 62 of \citet{yanagimoto1970measures}.
%In particular, the fastest algorithm to compute $R_n$ (Proposition~\ref{prop:corr-compu}(iii)) is
%combing algorithms for computing $D_n$ and $\tau^*_n$
%and using the first identity, and 
%Proposition~\ref{prop:d-consistent}(iv) is established using the properties of $D,R$ and the latter identity. 
\end{remark}

At first sight the computation of the different correlation
coefficients appears to be of very different complexity.  However,
this is not the case due to recent developments, which yield nearly
linear computation time for all
coefficients except $\xi^*_n$.

\begin{proposition}[Computational efficiency]
  \label{prop:corr-compu}
  If data have no ties, then $\xi_n$, $D_n$, $R_n$, and $\tau^*_n$ can
  all be computed in $O(n\log n)$ time.
\end{proposition}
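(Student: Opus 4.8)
The plan is to treat the four coefficients one at a time, since the nontrivial cases ($D_n$, $R_n$, $\tau^*_n$) all reduce to the same combinatorial device. For $\xi_n$ the claim is essentially immediate: after computing the ranks $r_{[1]},\dots,r_{[n]}$ by sorting the sample on the first coordinate (with ties in $X^{(1)}$ broken arbitrarily) and ranking on the second coordinate, which costs $O(n\log n)$, the statistic in~\eqref{eq:chatt:ties} is just a single linear pass over the consecutive differences $\lvert r_{[i+1]}-r_{[i]}\rvert$ and over $\ell_{[i]}(n-\ell_{[i]})$, i.e.\ $O(n)$ additional work. So I would dispose of $\xi_n$ in one or two sentences.

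For $\tau^*_n$, I would first rewrite the order-$4$ U-statistic in~\eqref{eq:taustarn} in a form amenable to counting. The key observation is that the summand factorizes into a function of the $X^{(1)}$-configuration times a function of the $X^{(2)}$-configuration, and that, after replacing each observation by its pair of (within-sample) ranks $(R^{(1)}_i,R^{(2)}_i)$ — obtainable in $O(n\log n)$ — the statistic depends only on these ranks. The strategy is then to express $\tau^*_n$ as a fixed linear combination of quantities of the form ``number of $4$-tuples whose rank pattern lies in a prescribed set,'' and to show each such count is computable in $O(n\log n)$. The standard tool is a sweep over the points sorted by $R^{(1)}$ while maintaining, in a Fenwick/BIT indexed by $R^{(2)}$, running prefix counts; for the quadratic-in-the-pattern pieces one also maintains prefix sums of partial counts, exactly the divide-and-conquer/BIT trick used to compute Kendall's $\tau$ and the number of inversions in $O(n\log n)$. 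I would cite the relevant recent references surveyed in Section~\ref{sec:prelim} (e.g.\ the algorithmic work underlying \citet{MR4185806}) rather than reproduce the bookkeeping.

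For $D_n$ and $R_n$ I would not attack the order-$5$ and order-$6$ U-statistics directly. Instead I would invoke Remark~\ref{remark:identity}: when $n\ge 6$ and the data have no ties, $12D_n+24R_n=\tau^*_n$, so one more linear-algebraic identity is needed to separate $D_n$ from $R_n$. I would supply a second, elementary $O(n\log n)$-computable statistic that is another independent linear combination of $D_n$ and $R_n$ — for instance a Hoeffding-type expression built from the empirical distribution function evaluated at the data points, $\frac1n\sum_i\{\hat F(X^{(1)}_i,X^{(2)}_i)-\hat F_1(X^{(1)}_i)\hat F_2(X^{(2)}_i)\}^2$, whose defining sums over ranks are again computable by a single BIT sweep — and then solve the resulting $2\times2$ linear system for $D_n$ and $R_n$. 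Equivalently one can give direct counting formulas for $D_n$ and $R_n$ in terms of rank patterns of $5$- and $6$-tuples and reduce them to $O(n)$ quantities that a BIT sweep accumulates; the point is only that finitely many such bounded-arity rank counts suffice. The case $n<6$ is handled by brute force in $O(1)$.

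The main obstacle is purely bookkeeping: verifying that each of the bounded-arity ``rank-pattern counts'' appearing in the expansions of $D_n$, $R_n$, and $\tau^*_n$ genuinely collapses to a sum of products of one-dimensional prefix statistics, so that a constant number of Fenwick-tree sweeps (each $O(n\log n)$) computes them all, and then confirming that the final linear combinations recover the U-statistics exactly (including the normalizing falling-factorial denominators and the subtraction of lower-order ``diagonal'' terms where indices coincide). None of this is conceptually hard, but it is the step where an explicit, careful enumeration is required; I would present it compactly and defer the fully detailed verification to the supplementary material.
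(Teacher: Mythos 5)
Your treatment of $\xi_n$ matches the paper's. For the remaining three the paper's proof is essentially a series of citations, and its route differs from yours in one substantive way. The paper obtains $D_n$ \emph{directly} from the closed-form rank formula that Hoeffding already gave in Section~5 of \citet{MR0029139} (a sum over $i$ of products of concordance counts and ranks, each computable by one Fenwick-tree sweep), obtains $\tau^*_n$ from the recent dedicated algorithms of \citet{doi:10.1137/1.9781611976465.136} and \citet{even2020independence}, and then gets $R_n$ for free from the identity $12D_n+24R_n=\tau^*_n$. You instead go the other way around: compute $\tau^*_n$ first, then try to \emph{separate} $D_n$ and $R_n$ by producing a second linear relation. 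That reversal introduces a soft spot: the quantity $\frac1n\sum_i\{\hat F(X^{(1)}_i,X^{(2)}_i)-\hat F_1(X^{(1)}_i)\hat F_2(X^{(2)}_i)\}^2$ you propose is \emph{not} a linear combination of $D_n$ and $R_n$ alone; it is a degree-5 V-statistic whose Hoeffding/U-statistic expansion also contains lower-order rank statistics (and diagonal terms where indices coincide). Those extra pieces are indeed $O(n\log n)$-computable, and since the leading U-piece is a multiple of $D_n$ (not $R_n$) the resulting system is triangular rather than a genuine $2\times2$, but you would need to carry out that decomposition explicitly for the argument to close — which is precisely the bookkeeping that Hoeffding's original Section~5 formula renders unnecessary. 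Two smaller points: the $O(n\log n)$ algorithm for $\tau^*_n$ should be credited to the algorithmic papers above rather than to \citet{MR4185806}, which only supplies the identity; and your ``brute force for $n<6$'' remark is unneeded since the proposition is an asymptotic claim.
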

\begin{proof}
  It is evident from its simple form that  $\xi_n$ can be computed in
  $O(n\log n)$ time \citep[Remark~4]{chatterjee2020new}.  The result
  about $D_n$ is due to \citet[Section~5]{MR0029139}; see also
  \citet[page~557]{MR3842884}.  The claim about $\tau^*_n$ is based on
  recent new methods due to \citet[Corollary~4]{doi:10.1137/1.9781611976465.136} 
  and \citet[Theorem~6.1]{even2020independence}; for
  an implementation see \citet{R:independence}.
  The claim about $R_n$ then follows from the relation given in
  Remark~\ref{remark:identity}.
\end{proof}

\begin{remark}[Computation of $\xi^*_n$]
  The definition of $\xi^*_n$ involves an integral over the unit
  square $[0,1]^2$.  How quickly the integral can be computed depends
  on smoothness properties of the considered kernel and the bandwidth
  choice.  \citet[Remark~5]{chatterjee2020new} suggests a time
  complexity of $O(n^{5/3})$.  Indeed, for a symmetric and four times
  continuously differentiable kernel $K$ that has compact support,
  there is a choice of bandwidths $h_1,h_2$ that satisfies the
  requirements of Definition~\ref{def:dette} and for which $\xi_n^*$
  can be approximated with an absolute error of order $o(n^{-1/2})$ in
  $O(n^{5/3})$ time.

  To accomplish this we may choose $h_1=h_2=n^{-1/4-\epsilon}$ for
  small $\epsilon>0$ and apply Simpson's rule to the two-dimensional
  integral in the definition of $\xi^*_n$.  By assumptions on $K$, the
  function $\zeta_n^2$ has continuous and compactly supported fourth
  partial derivatives that are bounded by a constant multiple of
  $h_1^{-5}$.  The error of Simpson's rule applied with a grid of
  $M^2$ points in $[0,1]^2$ is then $O(h_1^{-5}/M^4)$.  With
  $M^2=O(h_1^{-5/2}n^{1/4+\epsilon/2})=O(n^{7/8+3\epsilon})$, this
  error becomes $O(n^{-1/2-\epsilon})=o(n^{-1/2})$.  Due to the
  compact support of $K$, one evaluation of $\zeta_n$ requires $O(nh_1)$
  operations.  The overall computational time is thus
  $O(nh_1M^2)=O(n^{13/8 + 2\epsilon})$, which is $O(n^{5/3})$ as long
  as $\epsilon\le1/48$.
\end{remark}

\begin{remark}[Computation with ties] \label{remark:tie-computation}
  When the data can be considered as generated from a continuous
  distribution but featuring a small number of ties due to rounding,
  then ad-hoc breaking of ties poses little problem.  In contrast, if
  ties arise due to discontinuity of the data-generating distribution,
  then the situation is more subtle.  In this case, Chatterjee's
  $\xi_n$ is to be computed in the form from~\eqref{eq:chatt:ties},
  but the computational time clearly remains $O(n\log n)$. In
  contrast, $\xi_n^*$ is no longer a suitable estimator of
  $\xi$. Hoeffding's formulas for $D_n$ continue to apply with ties,
  keeping the computation at $O(n\log n)$ but, as we shall emphasize in Section \ref{sec:newtie}, 
  the estimated $D$ may lose some of its appeal.  Bergsma--Dassios--Yanagimoto's $\tau^*_n$ is
  suitable also for discrete data, but the available implementations
  that explicitly account for data with ties \citep{R:taustar} are
  based on the $O(n^2\log n)$ algorithm of \citet[Sec.~3]{MR3481807}
  or the slighly more memory intensive but faster $O(n^2)$ algorithm
  of \citet[Sec.~2.2]{heller2016computing}.  Computation of $R_n$ 
  with ties is also $O(n^2)$ \citep{MR3842884,R:taustar}.  
\end{remark}

%\subsection{Rank correlations without ties}\label{sec:notie}
\subsection{Consistency}

In the rest of this section as well as in Section~\ref{sec:subopt}, we
will always assume that the joint distribution function $F$ is
continuous, though not necessarily jointly absolutely continuous with regard to the Lebesgue measure.
%This is equivalent to say, marginal distribution functions $F_1$ and $F_2$ are continuous, though not necessarily marginally absolutely continuous.
Accordingly, both 
$X^{(1)}_1,\dots,X^{(1)}_n$ and $X^{(2)}_1,\dots,X^{(2)}_n$ are free of ties
with probability one. 
To clearly state the following results, we introduce three families of
bivariate distributions specified via their joint distribution
function $F$:
\begin{align*}
%\cF&=\{F: F\text{ is any bivariate distribution}\},\\
\cF^{\mc}&\equiv \bigl\{F: F\text{ is continuous as a bivariate function}\bigr\},\\%=\cF^{\xi}=\cF^{R}=\cF^{\tau^*},\\
%\cF^{\xi}&=\cF^{\mc},\\
%\{F: F\text{ is marginally not almost surely constant}\},\\
\cFD&\equiv \bigl\{F: F\text{ is absolutely continuous with regard to the Lebesgue measure}\bigr\},\\
%\cF^{R}&=\cF^{\mc},\\
%\cF^{\tau^*}&=\cF^{\mc}.
%\{F: F\text{ is discrete, marginally continuous, or a mixture of both}\}.
\cF^{\rm DSS}&\equiv \bigl\{F\in\cF^{\mc}: F\text{ has a copula
               $C(u^{(1)},u^{(2)})$ that  is three and two times continuously }\\
             &\qquad\qquad \text{differentiable
with respect to the arguments $u^{(1)}$ and $u^{(2)}$, respectively}\bigr\}\yestag\label{eq:bifamily1}. 
\end{align*}
Recall that the copula of $F$ satisfies $F(x^{(1)},x^{(2)})=C\{F_1(x^{(1)}),F_2(x^{(2)})\}$.
% Recall that $F$ is the joint distribution function of $X=(X^{(1)},X^{(2)})$, and see Section~\ref{sec:dis} ahead for further results as the continuity requirement is dropped. 

% We first summarize the properties of the correlation measures and coefficients, starting from Property (i), computational efficiency. Noting the assumption of no ties, the following proposition summarizes the computational complexity of calculating each of the aforementioned five rank correlation coefficients; note that in the following all but $\xi_n^*$ can be computed in nearly linear time. 

% \begin{proposition}[Computational efficiency]\label{prop:corr-compu}
% Assuming $F$ to be continuous, one can compute
% \begin{enumerate}[label=(\roman*)]
% \item $\xi_n$    in $O(n\log n)$ time (\citealp[page~2, Remark~4]{chatterjee2020new});
% \item approximated $\xi_n^*$ with the absolute error $o(n^{-1/2})$ in $O(n^{5/3})$ time for any arbitrarily small constant $\delta>0$ assuming $K$ is symmetric with compact support and four times continuously differentiable and $h_1,h_2$ are chosen appropriately (\citealp[page~2, Remark~5]{chatterjee2020new});
% \item $D_n$      in $O(n\log n)$ time (\citealp[Section~5]{MR0029139}, \citealp[page~557]{MR3842884});
% \item $R_n$      in $O(n\log n)$ time (\citealp[Equation~(6.1)]{MR4185806}, \citealp[page~557]{MR3842884}, \citealp[Corollary~4]{doi:10.1137/1.9781611976465.136}, see also Remark \ref{remark:identity});
% \item $\tau^*_n$ in $O(n\log n)$ time \citep[Corollary~4]{doi:10.1137/1.9781611976465.136}.
% \end{enumerate}
% \end{proposition}

We first discuss the large-sample consistency of the correlation coefficients as
estimators of the corresponding correlation measures.  Convergence in
probability is denoted $\longrightarrow_p$.

% The second proposition shows the   consistency of correlation coefficients $\xi_n$, $\xi_n^*$, $D_n$, $R_n$, and $\tau^*_n$ to the corresponding correlation measures. %Recall that we have assumed $F$ to be continuous throughout this section.

\begin{proposition}[Consistency of estimators]\label{prop:strong}
  For any $F\in\cF^{\mc}$ and  $n\to\infty$, we have
  \begin{equation*}
    %\label{eq:inprob}
    \xi_n\longrightarrow_p \xi,
    ~~~D_n\longrightarrow_p D,
    ~~~R_n\longrightarrow_p R, 
    ~~~\text{and}~~~
    \tau^*_n\longrightarrow_p \tau^*.      
  \end{equation*}
If in addition $F\in\cF^{\rm DSS}$ {and $K,h_1,h_2$ satisfy all assumptions stated in Definition~\ref{def:dette}}, then also $\xi^*_n\longrightarrow_p \xi$.
\end{proposition}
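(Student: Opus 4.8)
The plan is to treat the five coefficients separately, grouped by how much work each requires. The coefficients $D_n$, $R_n$, and $\tau^*_n$ are $U$-statistics with bounded kernels (fixed finite linear combinations of indicator functions), so after passing to the symmetrized kernel --- which changes neither the value of the statistic nor its mean --- the strong law of large numbers for $U$-statistics applies with no integrability issue and yields $D_n\to D$, $R_n\to R$, $\tau^*_n\to\tau^*$ almost surely, the limits being the corresponding kernel expectations, which equal $D$, $R$, $\tau^*$ by the unbiasedness recorded in Definitions~\ref{eg:hd}--\ref{eg:bdy}. Because $F\in\cF^{\mc}$ makes both coordinate samples almost surely tie-free, the weak-versus-strict inequalities in~\eqref{eq:Dn}--\eqref{eq:taustarn} are immaterial and the population targets do not depend on them. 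Convergence in probability follows a fortiori; alternatively, a Hoeffding decomposition gives $\var(D_n),\var(R_n),\var(\tau^*_n)=O(n^{-1})$, and once $D_n\longrightarrow_p D$ and $\tau^*_n\longrightarrow_p\tau^*$ are known, the identity $12D_n+24R_n=\tau^*_n$ of Remark~\ref{remark:identity} re-derives $R_n\longrightarrow_p R$.

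For $\xi_n$ there is nothing to do beyond citing Theorem~1.1 of \citet{chatterjee2020new}: its only hypothesis is that $X^{(2)}$ not be almost surely constant, and continuity of $F$ forces $F_2$ to be continuous, hence $X^{(2)}$ to be non-degenerate; thus $\xi_n\to\xi$ almost surely, and in particular in probability. The substantive case is $\xi^*_n$. Writing $C$ for the copula of $F$ and $U^{(j)}\equiv F_j(X^{(j)})$, which are uniform on $[0,1]$, a direct computation from Definition~\ref{eg:cha} --- using $\E\{\ind(X^{(2)}\ge x)\mid X^{(1)}\}=1-\partial_1 C(U^{(1)},F_2(x))$ together with $\int\var\{\ind(X^{(2)}\ge x)\}\,\d F_2(x)=\int_0^1 v(1-v)\,\d v=1/6$ --- gives the identity $\xi=6\int_0^1\int_0^1\{\partial_1 C(u^{(1)},u^{(2)})\}^2\,\d u^{(1)}\d u^{(2)}-2$. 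It therefore suffices to show $\|\zeta_n\|_{L^2([0,1]^2)}\longrightarrow_p\|\partial_1 C\|_{L^2([0,1]^2)}$, and via $\|\zeta_n\|^2-\|\partial_1 C\|^2=\langle\zeta_n-\partial_1 C,\,\zeta_n+\partial_1 C\rangle$ together with the Cauchy--Schwarz inequality this reduces to $\|\zeta_n-\partial_1 C\|_{L^2}\longrightarrow_p 0$ plus tightness of $\|\zeta_n\|_{L^2}$.

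Now $\zeta_n$ in~\eqref{eq:dette1} is exactly a kernel-type estimator of $\partial_1 C$ in which the unobserved copula coordinates $U^{(1)}_{[i]}$ and $U^{(2)}_{[i]}$ have been replaced by the scaled ranks $i/n$ and $r_{[i]}/n$. Following \citet{MR3024030}, I would split $\zeta_n-\partial_1 C$ into (i) a rank-substitution error, handled by empirical-process arguments --- the uniform closeness of the empirical marginals and empirical copula to their population counterparts, together with the Lipschitz property of $K$ and $\overline K$; (ii) a deterministic smoothing bias, which tends to $0$ because $h_1,h_2\to0$ and $\partial_1 C$ is continuous (indeed smooth, as $F\in\cF^{\rm DSS}$ in~\eqref{eq:bifamily1}); and (iii) a stochastic-fluctuation term, whose $L^2([0,1]^2)$ norm is driven to $0$ by the conditions $nh_1^3\to\infty$ and $nh_1h_2\to\infty$ in~\eqref{eq:tuning}. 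Each of these estimates, under precisely the smoothness hypothesis $F\in\cF^{\rm DSS}$ and the bandwidth constraints~\eqref{eq:tuning}, is contained in the asymptotic analysis of $\widehat r_n$ in \citet{MR3024030}, so the cleanest exposition simply invokes that result; the only point to verify is that the quantity they estimate is the measure $\xi$ as defined here, which it is --- it is their $r(X,Y)$.

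I expect the main obstacle, and the reason I would defer to \citet{MR3024030} rather than reprove the statement, to be step~(iii): one must control the fluctuation of a rank-based kernel estimator in an \emph{integrated} (not merely pointwise) sense, uniformly enough over $[0,1]^2$ --- in particular near the boundary, where $\partial_1 C$ need not be bounded away from $0$ or $1$ --- and check that the delicate balance among the four requirements in~\eqref{eq:tuning} is exactly what makes this, and the rank-substitution step (i), go through. The remaining pieces (the $U$-statistic law of large numbers and Chatterjee's consistency theorem) are entirely routine once the reductions above are in place.
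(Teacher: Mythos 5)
Your treatment of $\xi_n$, $D_n$, $R_n$, and $\tau^*_n$ matches the paper exactly: cite Chatterjee's Theorem~1.1 for the first, and standard U-statistic theory (and, if desired, the identity $12D_n+24R_n=\tau^*_n$) for the other three. For $\xi^*_n$ you also end up in the same place the paper does --- invoking the asymptotic analysis of $\widehat r_n$ from \citet{MR3024030} and checking that their $r(X,Y)$ coincides with $\xi$ --- but there is one caveat worth flagging: the paper does not take that result off the shelf, because it identifies a sign error in Equation~(21) of \citet{MR3024030} (the correct statement is $\hat B_{2n}-\tilde B_{2n}+C_{1n}+C_{2n}=o_p(n^{-1/2})$, not with minus signs), and then works with a corrected version of their Equations~(24)--(26) to obtain the linear representation $n^{1/2}(\xi^*_n-\xi)=\tfrac{12}{n^{1/2}}\sum_i(Z_i-\E Z_i)+o_p(1)$, from which consistency follows since the leading term has finite variance. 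For the consistency claim alone this sign glitch is harmless --- changing the signs of the $Z_{i,j}$'s does not affect finiteness of the variance --- so your plan is sound; the paper corrects it primarily because the same representation is re-used for Proposition~\ref{prop:cha}(ii), where the signs do matter. Your intermediate reduction to $\|\zeta_n-\partial_1 C\|_{L^2([0,1]^2)}\to_p 0$ with a bias/rank-substitution/fluctuation split is a slightly more elementary route to the same place and is essentially the structure of the proof inside \citet{MR3024030}; the paper skips it and reads off the $\sqrt{n}$-representation directly.
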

\begin{proof}
  The claim about $\xi_n$ is Theorem~1.1 in \citet{chatterjee2020new},
  and the one about $\xi_n^*$ {is proved in the supplement Section~\ref{subsec:proof:prop:strong} based on a revised version of} Theorem~3 in \cite{MR3024030}.  The
  remaining claims are immediate from U-statistics theory (e.g.,
  Proposition~1 in \citealp{MR3842884}, Theorem~5.4.A in
  \citealp{MR595165}). 
\end{proof}

Next, we turn to the correlation measures themselves.  % and their consistency.
% The third proposition shows the consistency of correlation measures
% $\xi$, $D$, $R$, and $\tau^*$
%in view of the discussions for Property (i) in our introduction.
It is clear that $\xi$, $D$, and $R$ are always nonnegative, and that
the same is true for $\tau^*$ when applied to $F\in\cF^{\mc}$; this
follows from Remark~\ref{remark:identity}.  The consistency
properties for continuous observations can be summarized as follows.

\begin{proposition}[Consistency of correlation
  measures]\label{prop:d-consistent}
  Each one of the correlation measures $\xi$, $R$, and $\tau^*$ is consistent for the entire class
  $\cF^{\mc}$, that is, if
  $F\in\cF^{\mc}$, then $\xi=0$ (or $R=0$ or $\tau^*=0$) if
  and only if the pair $(X^{(1)},X^{(2)})$ is independent.
  Hoeffding's $D$ is consistent for $\cF^{\ac}$ but not $\cF^{\mc}$.
\end{proposition}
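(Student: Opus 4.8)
The plan is to separate the routine ``if'' direction from the substantive ``only if'' direction and from the single negative claim. If $X^{(1)}$ and $X^{(2)}$ are independent then $F=F_1F_2$, which makes the integrands of $D$ and $R$ vanish identically, forces $\tau^*=12D+24R=0$ through Remark~\ref{remark:identity} (applicable since $F$ is continuous), and makes $\E\{\ind(X^{(2)}\ge x)\mid X^{(1)}\}$ equal to the constant $\pr(X^{(2)}\ge x)$, so that the numerator of $\xi$ vanishes. What remains is (i) the converses for $R$, $\tau^*$, and $\xi$, and (ii) the failure of that converse for $D$ on $\cF^{\mc}$.

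For $R$ I would pass to the copula. Since $F\in\cF^{\mc}$ makes $F_1$ and $F_2$ continuous, the probability integral transform gives $F_i(X^{(i)})\sim\mathrm{Unif}(0,1)$; the pair $(F_1(X^{(1)}),F_2(X^{(2)}))$ then has distribution function equal to the copula $C$ of $F$, which satisfies $F(x^{(1)},x^{(2)})=C\{F_1(x^{(1)}),F_2(x^{(2)})\}$ and is continuous (being $1$-Lipschitz in each argument). Changing variables $u=F_1(x^{(1)})$, $v=F_2(x^{(2)})$ in the definition of $R$ turns it into $R=\int_0^1\!\int_0^1\{C(u,v)-uv\}^2\,\d u\,\d v$. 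Hence $R=0$ forces $C(u,v)=uv$ for Lebesgue-almost every $(u,v)\in[0,1]^2$, then for every $(u,v)$ by continuity, so $F\equiv F_1F_2$, i.e.\ $X^{(1)}$ and $X^{(2)}$ are independent. For $\tau^*$ I would argue not directly but through Remark~\ref{remark:identity}: since $\tau^*=12D+24R$ for continuous $F$ and $D\ge0$, $R\ge0$, one has $\tau^*=0$ if and only if $D=R=0$, and the statement just proved for $R$ then finishes the job.

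For $\xi$ the first step is the denominator: continuity of $F_2$ gives $\pr(X^{(2)}\ge x)=1-F_2(x)$, so $\int\var\{\ind(X^{(2)}\ge x)\}\,\d F_2(x)=\int_0^1 u(1-u)\,\d u=1/6>0$. Therefore $\xi=0$ if and only if $\int\var[\E\{\ind(X^{(2)}\ge x)\mid X^{(1)}\}]\,\d F_2(x)=0$, i.e.\ for $F_2$-almost every $x$ the conditional survival function $\E\{\ind(X^{(2)}\ge x)\mid X^{(1)}\}$ equals its mean $1-F_2(x)$ almost surely. I would then upgrade ``for almost every $x$'' to ``for every $x$'' by a routine measure-theoretic limiting argument --- take a countable set of reals dense in the support of $F_2$, intersect the associated probability-one events, invoke monotonicity and one-sided continuity of conditional distribution functions, and use the probability integral transform to neutralise the flat stretches of $F_2$ --- obtaining that the conditional law of $X^{(2)}$ given $X^{(1)}$ coincides almost surely with its marginal law, which is independence. (This ``only if'' direction is also established in \citet{MR3024030} and \citet{chatterjee2020new}.)

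Finally, for $D$: its consistency on $\cF^{\ac}$ is the classical fact of \citet{MR0029139} --- when $F$ has a density, $D=0$ forces $F=F_1F_2$ on the closed support of that density, from which $F\equiv F_1F_2$ follows. The negative claim, that $D$ is \emph{not} consistent on $\cF^{\mc}$, is the one genuinely non-routine part, and I expect it to absorb most of the work: one must exhibit an atomless bivariate distribution $F$ that is singular with respect to Lebesgue measure, has dependent coordinates, yet satisfies $F(x^{(1)},x^{(2)})=F_1(x^{(1)})F_2(x^{(2)})$ at every point of the (Lebesgue-null) support of $F$, so that $D=\int\{F-F_1F_2\}^2\,\d F=0$ while $R>0$. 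In copula language the target is a singular copula $C\ne\Pi$ that agrees with the independence copula $\Pi(u,v)=uv$ on $\mathrm{supp}(C)$; such a distribution isolates exactly the gap --- already implicit in \citet{MR0029139} and \citet{MR0125690} --- between averaging $\{F-F_1F_2\}^2$ against $\d F$ as opposed to against $\d F_1\,\d F_2$, and producing it explicitly and checking these two properties is, in my estimation, the crux of the argument.
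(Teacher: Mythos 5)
Your proposal is correct and takes a genuinely more hands-on route than the paper, whose proof of this proposition is a pure chain of citations: it invokes Theorem~2 of \citet{MR3024030} and Theorem~1.1 of \citet{chatterjee2020new} for $\xi$, Theorem~2 of \citet{MR3842884} (and p.~490 of \citealp{MR0125690}) for $R$, Remark~\ref{remark:identity} plus Theorem~6.1 of \citet{MR4185806} for $\tau^*$, and Theorem~3.1 of \citet{MR0029139} together with Proposition~3 of \citet{yanagimoto1970measures} for $D$. Your copula change of variables for $R$, giving $R=\int_0^1\int_0^1\{C(u,v)-uv\}^2\,\d u\,\d v$ and concluding via Lipschitz continuity of $C$, is a clean elementary substitute for the first citation, and your reduction of $\tau^*$ to $R$ via $\tau^*=12D+24R$ with $D,R\ge 0$ coincides exactly with the route the paper describes. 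For $\xi$ you only sketch the upgrade from ``$F_2$-a.e.~$x$'' to ``every $x$'' and then lean on the same two references the paper cites; that is acceptable, and indeed the measure-theoretic details there are nontrivial. The one place you should be explicit that you are not independently proving anything is the negative claim for $D$ on $\cF^{\mc}$: you correctly identify the target structure (a continuous singular copula $C\ne\Pi$ agreeing with $\Pi$ on its Lebesgue-null support, giving $D=0$ but $R>0$), but you never produce it, and neither does the paper --- both defer to \citet{yanagimoto1970measures}. A small caution in the $D$ positive direction: the inference ``$D=0$ forces $F=F_1F_2$ on the closed support, from which $F\equiv F_1F_2$ follows'' is stated as if it were one line, but the last step is where Hoeffding's absolute-continuity hypothesis genuinely bites and is precisely what fails for $\cF^{\mc}\setminus\cF^{\ac}$; since you cite \citet{MR0029139} anyway this is fine, but you should not suggest it is routine.
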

\begin{proof}
   The consistency of $\xi$
  is Theorem~2 of \cite{MR3024030}, and Theorem~1.1 of
  \cite{chatterjee2020new}.  The consistency of $R$ is shown in detail
  in Theorem 2 of \citet{MR3842884}; see also p.~490 in
  \citet{MR0125690}.  The consistency of $\tau^*$ was established for
  $\cF^{\ac}$ in
  Theorem~1 in \citet{MR3178526}, and that for $\cF^{\mc}$ can be shown
  via Remark~\ref{remark:identity}; compare  Theorem~6.1 of
  \cite{MR4185806}.  Finally, the claim about $D$ follows from
  Theorem~3.1 of \cite{MR0029139} and its generalization in
  Proposition~3 of \cite{yanagimoto1970measures}. 
\end{proof}

\subsection{Independence tests}\label{sec:tests}

For large samples, computationally efficient independence tests may be
implemented using the
asymptotic null distributions of the correlation coefficients, which are summarized below.

% The fourth proposition summarizes some asymptotic results for the five correlation coefficients under $H_0$ in \eqref{eq:H0}. 
% %We will focus on the continuous case, other cases including discrete distributions will be discussed later.
% %The following two propositions summarize the limiting null distributions of 
% %$\xi_n$, $D_n$, $R_n$, and $\tau^*_n$.

\begin{proposition}[Limiting null distributions]\label{prop:cha}
%Let $(X^{(1)}_1,X^{(2)}_1),\dots,(X^{(1)}_n,X^{(2)}_n)$ be $n$ independent copies of $(X^{(1)},X^{(2)})$ where
Suppose $F\in\cF^{\mc}$ has $X^{(1)}$ and $X^{(2)}$ independent. As $n\to\infty$, it holds that
\begin{enumerate}[label=(\roman*)]
\item for Chatterjee's correlation coefficient $\xi_n$, $n^{1/2}\xi_n\to N(0,2/5)$ in distribution (Theorem~2.1 in \citealp{chatterjee2020new}); 
%\item $n^{1/2}\xi_n^{*}\to N(0,64/5)$ in distribution 
%assuming that $F\in\cF^{\rm DSS}$
%and $K,h_1,h_2$ satisfy all assumptions stated in Definition~\ref{def:dette} (Theorem~3 in \citealp{MR3024030}); 
\item for Dette--Siburg--Stoimenov's correlation coefficient $\xi_n^{*}$, 
{$n^{1/2}\xi_n^{*}\to 0$ in probability}
assuming that $F\in\cF^{\rm DSS}$
and $K,h_1,h_2$ satisfy all assumptions stated in
Definition~\ref{def:dette} ({revised version of Theorem~3 in
  \citealp{MR3024030}; see Section \ref{subsec:proof:prop:cha} of
  the supplementary material}); 
\item  for $\mu\in\{D, R, \tau^*\}$, %we have
\[n\mu_n\to\sum_{v_1,v_2=1}^{\infty}\lambda^{\mu}_{v_1,v_2}\Big(\xi_{v_1,v_2}^2-1\Big)~~~\text{in distribution},\]
where 
\[
\lambda^{\mu}_{v_1,v_2}=
\begin{cases}1/(\pi^4 v_1^2 v_2^2) &\text{ when }\mu=D,R,\\
36/(\pi^4 v_1^2 v_2^2) &\text{ when }\mu=\tau^*,\end{cases}
\]
for $v_1,v_2=1,2,\dots$,  
%are the non-zero eigenvalues of the integral equation
%\[
%\E\Big[\binom{m}{2}  h^{\mu}_{2}\Big((x^{(1)},x^{(2)}), (X^{(1)},X^{(2)}); 
%                        \pr_{X^{(1)}}\times \pr_{X^{(2)}}\Big) 
%         \psi\Big((X^{(1)},X^{(2)})\Big)\Big]
%=\lambda \psi\Big((x^{(1)},x^{(2)})\Big),
%\yestag\label{eq:eigen}
%\]
%in which $m$ is the order of U-statistic $\mu_n$ and $X^{(1)}$ is independent of $X^{(2)}$, 
and $\{\xi_{v_1,v_2}\}$ as independent standard normal random variables 
(Proposition~7 in \citealp{MR3842884}, Proposition 3.1 in \citealp{MR4185806}). 
\end{enumerate}
\end{proposition}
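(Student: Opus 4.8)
The plan is to treat the three parts by separate routes. Part~(iii) is the most routine. First I would apply the probability integral transform $(X^{(1)},X^{(2)})\mapsto(F_1(X^{(1)}),F_2(X^{(2)}))$, which under $H_0$ produces i.i.d.\ observations uniform on $[0,1]^2$ and, since $D_n$, $R_n$, and $\tau_n^*$ are rank statistics, leaves them unchanged. Each of $D_n$ and $R_n$ is then a U-statistic whose kernel is \emph{degenerate} under independence---its first-order Hoeffding projection vanishes---so the classical limit theory for degenerate U-statistics (Chapter~5 of \citealp{MR595165}) gives weak convergence of $nD_n$ and $nR_n$ to $\sum_{v_1,v_2}\lambda_{v_1,v_2}(\xi_{v_1,v_2}^2-1)$, where the $\lambda_{v_1,v_2}$ are the eigenvalues of the integral operator on $L^2([0,1]^2)$ attached to the second-order Hoeffding kernel and the $\xi_{v_1,v_2}$ are i.i.d.\ standard normal; the centering by $1$ reflects the unbiasedness of $D_n$ and $R_n$ for $D=R=0$. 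The one computation to carry out is that, for both $D$ and $R$, this second-order kernel factors across the two coordinates and on each coordinate equals the Brownian bridge covariance
\[
\kappa(s,t)=\min(s,t)-st=\sum_{v=1}^{\infty}\frac{1}{\pi^2v^2}\,\varphi_v(s)\,\varphi_v(t),\qquad\varphi_v(t)=\sqrt{2}\,\sin(\pi vt),
\]
so the tensorized eigenvalues are $1/(\pi^4v_1^2v_2^2)$. For $\tau^*$ I would not repeat this but instead invoke Remark~\ref{remark:identity}, $n\tau_n^*=12\,nD_n+24\,nR_n$, after checking that the second-order kernels of $D_n$ and $R_n$ agree after the transform, so that $n(D_n-R_n)\to_p0$ and $n\tau_n^*$ converges to $36$ times the common chaos, i.e.\ $\lambda^{\tau^*}_{v_1,v_2}=36/(\pi^4v_1^2v_2^2)$. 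This recovers Proposition~7 of \citet{MR3842884} and Proposition~3.1 of \citet{MR4185806}.

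For part~(i) I would give a H\'ajek-representation argument rather than Chatterjee's combinatorial-CLT route via \citet{chao_bai_liang_1993}. Since $F$ is continuous there are a.s.\ no ties, so I can use the closed form $\xi_n=1-3(n^2-1)^{-1}\sum_{i=1}^{n-1}\lvert r_{[i+1]}-r_{[i]}\rvert$ of Definition~\ref{eg:cha}. Using $\lvert a-b\rvert=a+b-2\min(a,b)$ together with $\{r_{[1]},\dots,r_{[n]}\}=\{1,\dots,n\}$, I would rewrite $\sum_i\lvert r_{[i+1]}-r_{[i]}\rvert$, up to negligible boundary terms, as an affine function of $\sum_{i=1}^{n-1}\min(r_{[i+1]},r_{[i]})$. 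Next I would pass from ranks to population c.d.f.\ values: with $U_i=F_1(X_i^{(1)})$ and $V_i=F_2(X_i^{(2)})$, which are i.i.d.\ uniform on $[0,1]^2$ under $H_0$, one has $r_{[i]}/n=\hat G_n(V_{[i]})$, the uniform empirical c.d.f.\ of the $V$'s at the concomitant $V_{[i]}$ of the $i$th $U$-order statistic, which is uniformly $O_p(\sqrt{\log n/n})$-close to $V_{[i]}$. Then, building a particular form of the projected statistic of \citet{MR1378827}, I would extract an asymptotically linear representation
\[
n^{1/2}\xi_n=\frac{1}{\sqrt{n}}\sum_{i=1}^{n}\psi(U_i,V_i)+o_p(1)
\]
for an explicit centered, square-integrable $\psi$, apply the ordinary CLT, and compute $\mathrm{var}\{\psi(U,V)\}=2/5$ by a direct integral. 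This reproves Theorem~2.1 of \citet{chatterjee2020new}, and this representation is exactly what feeds into the local-power analysis later in the paper.

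For part~(ii) the target is only $\xi_n^*=o_p(n^{-1/2})$ under $H_0$. Under independence the regression function being estimated is constant, so $\zeta_n$ in \eqref{eq:dette1} is a kernel smoother of a reparametrized constant copula density and $6\int_0^1\int_0^1\zeta_n^2-2$ estimates $\xi=0$. I would write $\zeta_n=\E[\zeta_n]+(\zeta_n-\E[\zeta_n])$, expand the square, and (a)~bound the deterministic contribution, a bias of order $h_1^2+h_2^2+(nh_1h_2)^{-1}$, which is $o(n^{-1/2})$ under the bandwidth conditions \eqref{eq:tuning}, and (b)~bound the variance of the stochastic contribution, which the same conditions force to be $o(n^{-1})$; Chebyshev's inequality then finishes. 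This yields the revised form of Theorem~3 of \citet{MR3024030}; the bias--variance bookkeeping, including the precise role of each condition in \eqref{eq:tuning}, is what makes this part slightly fussy.

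The step I expect to be the genuine obstacle is the remainder control in part~(i). Because $\xi_n$ depends on the data through the nearest-neighbor pattern---in the $X^{(1)}$-ordering---of the second-coordinate ranks, it is not a smooth functional of the empirical distribution, so showing that the difference between $n^{1/2}\xi_n$ and its linearization is $o_p(1)$ uniformly requires real empirical-process estimates, essentially an oscillation bound for $\hat G_n-\mathrm{id}$ along the random partition $\{V_{[i]}\}_{i=1}^n$. This is precisely what the projected-statistic device of \citet{MR1378827} is designed to supply, in the same spirit as the H\'ajek-representation strategy developed in \citet{shi2020rate}; parts~(ii) and (iii) are routine by comparison.
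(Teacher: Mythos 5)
There is a genuine gap in your treatment of part~(i). The representation you aim for, $n^{1/2}\xi_n = n^{-1/2}\sum_{i=1}^n\psi(U_i,V_i)+o_p(1)$ with $\psi$ a fixed function of a \emph{single} observation, does not exist. The projected statistic built from \citet[Eq.~(9)]{MR1378827} --- the very construction the paper deploys in the proof of Theorem~\ref{thm:power} --- gives instead
\[
-n^{1/2}\xi_n/3 \;=\; n^{-1/2}\sum_{i=1}^{n-1}\Xi_{[i]}+o_p(1),
\qquad
\Xi_{[i]}\equiv\big\lvert V_{[i+1]}-V_{[i]}\big\rvert + V_{[i+1]}\big(1-V_{[i+1]}\big) + V_{[i]}\big(1-V_{[i]}\big)-\tfrac{2}{3},
\]
with $V_{[j]}=F_2\big(X^{(2)}_{[j]}\big)$ the concomitants in the $X^{(1)}$-ordering. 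Conditionally on the $X^{(1)}$'s, the $V_{[j]}$ are i.i.d.\ uniform under $H_0$, and $(\Xi_{[i]})_i$ is a centered $1$-dependent sequence; a short calculation gives $\E\big(\Xi_{[i]}\mid V_{[i]}\big)=\E\big(\Xi_{[i]}\mid V_{[i+1]}\big)=0$, so the first-order H\'ajek projection onto any single observation vanishes identically. The limit $N(0,2/5)$ therefore comes from a central limit theorem for $1$-dependent arrays, not from the ordinary CLT for an i.i.d.\ linear expansion. The obstacle you anticipate is thus not ``remainder control of a linearization'' but the non-existence of the linearization itself; as written your Step~(i) cannot deliver the claimed normal limit or variance.

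For part~(ii) your route differs from the paper's and glosses over what actually makes the stochastic term negligible. The paper does not redo a bias--variance analysis of $\int\!\!\int\zeta_n^2$; it invokes the asymptotic linear representation from \citet[Eq.~(21)]{MR3024030} (after a sign correction), $n^{1/2}(\xi_n^*-\xi)=\tfrac{12}{\sqrt{n}}\sum_{i}(Z_i-\E Z_i)+o_p(1)$ with $Z_i=Z_{i,1}-Z_{i,2}-Z_{i,3}$, and observes that under independence $\tau(u^{(1)},u^{(2)})=u^{(2)}$ is constant in $u^{(1)}$, which forces $Z_{i,1}=Z_{i,3}$ and $Z_{i,2}=0$; hence $Z_i\equiv 0$ and $n^{1/2}\xi_n^*\to_p 0$ at once. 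Your claim that the bandwidth conditions~\eqref{eq:tuning} \emph{alone} force the variance of the stochastic contribution to be $o(n^{-1})$ is not justified as stated: the cross term $\int\!\!\int\E\zeta_n\cdot(\zeta_n-\E\zeta_n)$ is generically $O_p(n^{-1/2})$, and it is killed here by a null-specific degeneracy (equivalently, the rank identity that makes $\sum_i B(r_{[i]}/n)$ nonrandom for any deterministic $B$), which must be surfaced explicitly --- that is the content of $Z_i\equiv0$. Part~(iii) is standard degenerate-U-statistic theory and matches the cited sources; note, though, that the paper supplies its own argument only for~(ii) and treats~(i) and~(iii) purely as citations, so only (ii) admits a direct comparison with the paper's proof.
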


For a given significance level $\alpha\in(0,1)$, let $z_{1-\alpha/2}$ be
the $(1-\alpha/2)$-quantile of the standard normal distribution.  Then the
asymptotic test based on Chatterjee's $\xi_n$ is
\begin{align*}
  {T}^{\xi_n}_{\alpha}\equiv \ind\big\{n^{1/2}|\xi_n| >(2/5)^{1/2}\cdot 
                        z_{1-\alpha/2}\big\}. %\quad\text{and} \quad
%  {T}^{\xi_n^*}_{\alpha}\equiv \ind\big\{n^{1/2}|\xi_n^*|
%                          >(64/5)^{1/2}\cdot z_{1-\alpha/2}\big\},
\end{align*}
The tests based on $\mu_n$ with $\mu\in\{D, R, \tau^*\}$ take the form
\[
{T}^{\mu_n}_{\alpha}\equiv \ind\big(n\, \mu_n > q^{\mu}_{1-\alpha}\big),~~~ %
q^{\mu}_{1-\alpha}\equiv \inf\Big[x:\pr\Big\{\sum_{v_1,v_2=1}^{\infty}\lambda^{\mu}_{v_1,v_2}\Big(\xi_{v_1,v_2}^2-1\Big)\le x\Big\}\ge 1-\alpha \Big],
\]
where $\lambda^{\mu}_{v_1,v_2}$ and $\xi_{v_1,v_2}$,
$v_1,v_2=1,\dots,n,\dots$ were presented in Proposition
\ref{prop:cha}.  We note that \cite{R:taustar} gives a routine to
compute the needed quantiles.
{It is unclear how to implement the test based on
  Dette--Siburg--Stoimenov's $\xi_n^*$ without the need for simulation
  or permutation as a non-degenerate limiting null distribution is
  currently unknown.} 

Given the distribution-freeness of ranks for the class $\cF^{\mc}$,
Proposition~\ref{prop:cha} yields uniform asymptotic validity
of the tests just defined.  Moreover,
Propositions~\ref{prop:strong}--\ref{prop:d-consistent} yield
consistency at fixed alternatives.  We summarize these facts below.
% of the tests mAs
% direct corollary of Propositions~\ref{prop:strong}--\ref{prop:cha},
% and using the regarding the uniform validity and consistency of tests
% ${T}^{\xi_n}_{\alpha}$, ${T}^{\xi^*_n}_{\alpha}$,
% ${T}^{D_n}_{\alpha}$, ${T}^{R_n}_{\alpha}$, and
% ${T}^{\tau^*_n}_{\alpha}$ is summarized below.

\begin{proposition}[Uniform validity and consistency of tests]$ $
%Let $(X^{(1)}_1,X^{(2)}_1),\dots,(X^{(1)}_n,X^{(2)}_n)$ be $n$
%independent copies of $(X^{(1)},X^{(2)})\sim \P$.
  The tests based on the correlation coefficients $\mu_n \in \{\xi_n, D_n, R_n, \tau^*_n\}$ are 
  uniformly valid in the sense that
  \[
\lim_{n\to \infty}\sup_{F\in\cF^{\mc}}\pr({T}^{\mu_n}_{\alpha}=1\mid H_0)=\alpha.\yestag\label{eq:valid2}
\]
Moreover, these tests are consistent, i.e., for fixed $F\in\cF^{\mc}$ such that $X^{(1)}$ and $X^{(2)}$ are
dependent and $\mu_n\in\{\xi_n,R_n,\tau^*_n\}$, it holds that
 \[\lim_{n\to \infty}\pr({T}^{\mu_n}_{\alpha}=1\mid H_1)=1.\yestag\label{eq:valid3}\] 
The conclusion \eqref{eq:valid3} holds for $\mu_n=D_n$ if assuming further that $F\in\cFD$. 
\end{proposition}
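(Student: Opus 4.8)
The plan is to reduce the uniform validity claim~\eqref{eq:valid2} to a pointwise limit by exploiting rank distribution-freeness, and then to read off both validity and consistency from Propositions~\ref{prop:strong}--\ref{prop:cha}. First I would note that each of $\xi_n$, $D_n$, $R_n$, $\tau^*_n$ depends on the sample only through its ranks, and that for $F\in\cF^{\mc}$ the marginals are continuous, so that under $H_0$ the two marginal rank vectors form, almost surely, a pair of independent uniformly distributed permutations of $\{1,\dots,n\}$. Hence, for $\mu_n\in\{\xi_n,D_n,R_n,\tau^*_n\}$, the distribution of $\mu_n$ under $H_0$ does not depend on $F$ at all; in particular $\pr(T^{\mu_n}_{\alpha}=1\mid H_0)$ is one and the same number for every $F\in\cF^{\mc}$, so the supremum in~\eqref{eq:valid2} equals that number and it suffices to show that it tends to $\alpha$.

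For this pointwise limit I would invoke Proposition~\ref{prop:cha}. For $\xi_n$, under $H_0$ we have $(2/5)^{-1/2}n^{1/2}|\xi_n|\to|Z|$ in distribution with $Z\sim N(0,1)$, whose distribution function is continuous at $z_{1-\alpha/2}$, so $\pr(T^{\xi_n}_{\alpha}=1\mid H_0)\to\pr(|Z|>z_{1-\alpha/2})=\alpha$. For $\mu\in\{D,R,\tau^*\}$, under $H_0$ we have $n\mu_n\to W^{\mu}\equiv\sum_{v_1,v_2=1}^{\infty}\lambda^{\mu}_{v_1,v_2}(\xi_{v_1,v_2}^2-1)$ in distribution; since all weights $\lambda^{\mu}_{v_1,v_2}$ are strictly positive, $W^{\mu}$ has an absolutely continuous law, so the quantile $q^{\mu}_{1-\alpha}$ is a continuity point of its distribution function and $\pr(T^{\mu_n}_{\alpha}=1\mid H_0)=\pr(n\mu_n>q^{\mu}_{1-\alpha}\mid H_0)\to\pr(W^{\mu}>q^{\mu}_{1-\alpha})=\alpha$. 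Combined with the previous paragraph this gives~\eqref{eq:valid2}.

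For consistency, fix $F\in\cF^{\mc}$ with $X^{(1)}$ and $X^{(2)}$ dependent. By Proposition~\ref{prop:d-consistent} we then have $\xi>0$, $R>0$, $\tau^*>0$, and, once $F\in\cFD$ is additionally assumed, also $D>0$. By Proposition~\ref{prop:strong}, $\xi_n\longrightarrow_p\xi$, so for any $\epsilon\in(0,\xi)$ the event $\{\xi_n>\xi-\epsilon\}$ has probability tending to $1$; on this event $n^{1/2}|\xi_n|>n^{1/2}(\xi-\epsilon)\to\infty$, which eventually exceeds the fixed threshold $(2/5)^{1/2}z_{1-\alpha/2}$, so $\pr(T^{\xi_n}_{\alpha}=1\mid H_1)\to1$, which is~\eqref{eq:valid3} for $\mu_n=\xi_n$. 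The identical argument applies to $\mu_n\in\{D_n,R_n,\tau^*_n\}$, using $\mu_n\longrightarrow_p\mu>0$ and the fact that the critical value $q^{\mu}_{1-\alpha}$ is a fixed constant, so that $n\mu_n\to\infty$ in probability and the test rejects with probability tending to $1$.

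The only non-routine point I anticipate is the assertion that the distribution function of $W^{\mu}$ is continuous at $q^{\mu}_{1-\alpha}$; this uses the strict positivity of the $\lambda^{\mu}_{v_1,v_2}$, since a single term $\lambda(\xi_{v_1,v_2}^2-1)$ with $\lambda>0$ is absolutely continuous and convolution with the independent remainder preserves absolute continuity, giving $W^{\mu}$ a density and hence no atoms. Alternatively one may simply cite the known absolute continuity of the Hoeffding / Cram\'er--von Mises limiting distributions. Everything else is a direct application of the convergence statements in Propositions~\ref{prop:strong} and~\ref{prop:cha} together with the rank distribution-freeness recorded above.
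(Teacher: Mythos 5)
Your proposal is correct and follows the same route the paper indicates: reduce the supremum in \eqref{eq:valid2} to a pointwise limit via rank distribution-freeness, then read validity off Proposition~\ref{prop:cha} and consistency off Propositions~\ref{prop:strong}--\ref{prop:d-consistent}. The paper states this only as a brief sketch before the proposition; you fill in the details faithfully, including the useful verification that the weighted chi-square limit $W^\mu$ has an absolutely continuous law so that $q^\mu_{1-\alpha}$ is a continuity point.
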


\section{Local power analysis}\label{sec:subopt}

This section investigates the local power of the {four} rank
correlation-based tests of $H_0$ introduced in Section
\ref{sec:tests}.  To this end, we consider two classical { and well-used}
families of alternatives to the null hypothesis of independence: {\it
  rotation alternatives} ({\it Konijn alternatives};
\citealp{MR79384}) and {\it mixture alternatives} ({\it Farlie-type
  alternatives}; \citealp{MR119312,MR124108}; see also \citealp{MR3466185}). %For the mixture
%alternatives, we distinguis two classes of models that will give rise to different performances of Dette--Siburg--Stoimenov's coefficient. 
%These two alternatives are classical in the context of independence testing. %, 
%provided that the most natural model of alternatives 
%is lacking in the statistical literature \citep[Section~5.4]{MR1335235}. 
%Within these two families, we establish the rate-optimality and rate sub-optimality (the rate of bivariate alternatives here is the usual $n^{1/2}$ rate) of the four rank correlation tests. 
%Other families of bivariate alternatives can be found in \citet{MR2288045}; %and \citet{MR3466185}. 
%The results regarding to rate-optimality and the proofs are quite similar.
%conclusions for them and proofs are similar to the considered ones.

%We first formally introduce the  families of local alternatives. 

\vspace{.2cm}

{(A) Rotation alternatives. }%\label{sec:alter-kon}
%We now investigate the power of the proposed tests from an asymptotic
%perspective. To justify the rate-optimality of the proposed test, 
%We consider the following alternative model introduced by \citet{MR79384}.
%In detail, 
Let $Y^{(1)}$ and $Y^{(2)}$ be two real-valued independent random
variables that have mean zero and are absolutely continuous with Lebesgue-densities $f_1$ and $f_2$, respectively.  
For  $\Delta\in(-1,1)$, consider 
\[
\mX = \bigg(\begin{matrix}X^{(1)} \\ X^{(2)}\end{matrix}\bigg)
\equiv\bigg(\begin{matrix}1 & \Delta\\
                      \Delta & 1\end{matrix}\bigg) 
              \bigg(\begin{matrix}Y^{(1)} \\ Y^{(2)}\end{matrix}\bigg)
= \fA_{\Delta}\bigg(\begin{matrix}Y^{(1)} \\ Y^{(2)}\end{matrix}\bigg)
= \fA_{\Delta}\mY.
\yestag\label{eq:rotamodel}
\]
 %; all the parameters are chosen such that $\fA_{\Delta}^{-1}$
 %exists.
For all $\Delta\in(-1,1)$, the matrix $\fA_{\Delta}$ is clearly full
rank and invertible.
% for all $\Delta\in\Theta\equiv (-\Delta^*,\Delta^*)$ with some (sufficiently small) constant $\Delta^*>0$.  
For any $\Delta\in(-1,1)$, let $f_{\mX}(\mx;\Delta)$ denote the
density of $\mX=\fA_{\Delta}\mY$.  We then make the following assumptions on $Y^{(1)},Y^{(2)}$.

\begin{assumption}\label{asp:kon}
It holds that %$Y^{(1)},Y^{(2)}$ is such that 
\begin{enumerate}[label=(\roman*)]
%\item $Y^{(1)},Y^{(2)}$ are absolutely continuous with density functions $f_{Y^{(1)}},f_{Y^{(2)}}$ and independent;
%\item $\fA_{\Delta}$ is full rank for all $\Delta\in\Theta\equiv (-\Delta^*,\Delta^*)$ for some (sufficiently small) constant $\Delta^*>0$; 
\item\label{asp:kon1} the distributions of $\mX$ have a common support for all $\Delta\in(-1,1)$, so that without loss of generality ${\cX}\equiv \{\mx: f_{\mX}(\mx;\Delta) > 0\}$ is independent of $\Delta$; 
\item\label{asp:kon2} the density $f_k$ is absolutely continuous with non-constant
  logarithmic derivative
  $\rho_k\equiv f'_k/f_k$,
  %,  
%%such that $\E\{\rho_{Y^{(k)}}(Y^{(k)})\}=0$, 
%where $\rho_{Y^{(k)}}(z)\equiv f'_{Y^{(k)}}(z)/f_{Y^{(k)}}(z)$
  $k=1,2$; 
\item\label{asp:kon3} the Fisher information of $\mX$ relative to $\Delta$ at the point $0$, denoted $\cI_{\mX}(0)$, is strictly positive, and $\E\{(Y^{(k)})^2\} <\infty$, $\E[\{\rho_k(Y^{(k)})\}^2] <\infty$ for $k=1,2$.
\end{enumerate}
\end{assumption}

\begin{remark}\label{rmk:kon}
Assumption~\ref{asp:kon}\ref{asp:kon2},\ref{asp:kon3} implies $\E\{\rho_k(Y^{(k)})\}=0$ and $\cI_{\mX}(0)<\infty$. 
\end{remark}

\begin{example}\label{ex:kon}
Suppose $f_k(z)$ is absolutely continuous and positive for all real numbers $z$, $k=1,2$. If 
\[
  \E\big(Y^{(k)}\big) =0,~~~
  \E\big\{\big(Y^{(k)}\big)^2\big\} <\infty, ~~~
  \E\big[\big\{\rho_k\big(Y^{(k)}\big)\big\}^2\big] <\infty,~~~\text{for}~k=1,2,
  \yestag\label{eq:kon1}
\]
% is satisfied, where $\rho_{Y^{(k)}}(z)\equiv
% f'_{Y^{(k)}}(z)/f_{Y^{(k)}}(z)$,
then Assumption~\ref{asp:kon} holds.  As a special case,
Assumption~\ref{asp:kon} holds if $\mY^{(1)}$ and $\mY^{(2)}$ are
centred and follow normal distributions or $t$-distributions with not necessarily integer-valued degrees of
freedom greater than two.
% \mbox{}
% \begin{enumerate}[label=(\alph*)]
% \item\label{ex:kon1} 
%   Let $f_{Y^{(k)}}(z)$ be absolutely continuous and positive for all $z\in\mathbb{R}$, $k=1,2$. If 
%   \[
%   \E\big(Y^{(k)}\big) =0,~~~
%   \E\big\{\big(Y^{(k)}\big)^2\big\} <\infty, ~~~
%   \E\Big[\big\{\rho_{Y^{(k)}}\big(Y^{(k)}\big)\big\}^2\Big] <\infty,~~~\text{for}~k=1,2,
%   \yestag\label{eq:kon1}
%   \]
%   %is satisfied, where $\rho_{Y^{(k)}}(z)\equiv
%   %f'_{Y^{(k)}}(z)/f_{Y^{(k)}}(z)$,
%   then Assumption~\ref{asp:kon} holds.
% \item\label{ex:kon2} In particular, if $\mY^{(1)}$ and $\mY^{(2)}$ are
%   centred normal or follow centred $t$-distributions with
%   degrees of freedom (not necessarily integer-valued) greater than
%   two, then Assumption~\ref{asp:kon} holds.  
% \end{enumerate}
\end{example}

{(B) Mixture alternatives. }%\label{sec:alter-gof}
%Suppose that we now want to test $H_0: F_0 = F_X F_Y$, where $F_0$ is the joint
%distribution function of $(X, Y)$ with the associated marginal distribution functions
%of $X$ and $Y$ being $F_X$ and $F_Y$, respectively, and we consider a sequence of
%local alternatives $H_n: F_n = (1-\gamma/n^{1/2})F_0+(\gamma/n^{1/2})K$ for a fixed 
%$\gamma > 0$ and a fixed joint distribution $K\ne F_0$ with marginals $F_X$ and $F_Y$. 
Consider the following mixture alternatives that were used in
\citet[Sec.~3]{MR3466185}.  Let $F_{1}$ and $F_{2}$ be fixed univariate
distribution functions that are absolutely continuous with
Lebesgue-density functions $f_{1}$ and $f_{2}$, respectively.  Let
$F_0\big(x^{(1)},x^{(2)}\big)=F_{1}\big(x^{(1)}\big)F_{2}\big(x^{(2)}\big)$
be the product distribution function yielding independence, and let
$G\ne F_0$ be a fixed bivariate distribution function that is
absolutely continuous and such that $(X^{(1)},X^{(2)})$ are dependent
under $G$. %with marginals $F_{1}$ and $F_{2}$,
Let the density functions of $F_0$ and $G$, denoted by $f_0$ and $g$,
respectively, be continuous and have compact supports.  Then define the
following alternative model for the distribution of $\mX=(X^{(1)},X^{(2)})$:
\[F_{\mX}\equiv(1-\Delta)F_{0}+\Delta G,\yestag\label{eq:gof-model}\]
with $0\le \Delta\le 1$. 

We make the following additional assumptions on $F_0$ and $G$.
% , where
% we refer to the function $s(x)\equiv g(x)/f_0(x)-1$.

%\begin{align*}
%(1-\Delta)F_1F_2+\Delta G &= [(1-\Delta)F_1+\Delta G_1][(1-\Delta)F_2+\Delta G_2]\\
%                          & \Updownarrow\\
%                 \Delta G &= -\Delta(1-\Delta)F_1F_2+\Delta(1-\Delta)F_1G_2+\Delta(1-\Delta)F_2G_1+\Delta^2 G_1G_2\\
%                          & \Updownarrow\\
%        \Delta (G-G_1G_2) &= -\Delta(1-\Delta)F_1F_2+\Delta(1-\Delta)F_1G_2+\Delta(1-\Delta)F_2G_1-\Delta(1-\Delta) G_1G_2\\
%                          & \Updownarrow\\
%               (G-G_1G_2) &= -(1-\Delta)(F_1-G_1)(F_2-G_2)\\
%\end{align*}

\begin{assumption}\label{asp:far}
It holds that %$F_0,K$ is such that 
\begin{enumerate}[label=(\roman*)]
\item\label{asp:far1} the distribution $G$ is absolutely continuous with respect to
  $F_0$ and $s(x)\equiv g(x)/f_0(x)-1$ is continuous; 
\item\label{asp:far2} the conditional expectation $\E\{s(Y)|Y^{(1)}\}=0$ almost surely for $Y=(Y^{(1)},Y^{(2)})\sim F_0$;
  % ,  
% $Y^{(k)}$ is distributed with distribution function $F_k$ for $k=1,2$,
% $Y^{(1)}$ and $Y^{(2)}$ are independent;
%\item\label{asp:far3} $\E[s(Y)|Y^{(2)}]=0$ almost surely for $Y=(Y^{(1)},Y^{(2)})\sim F_0$; 
\item\label{asp:far4} the function $s$ is not additively separable, i.e., there
  do not exist univariate functions $h_1$ and $h_2$ such that
  $s(x)=h_1(x^{(1)})+h_2(x^{(2)})$;
  %and $h_k(x^{(k)})$ depend only on $x^{(k)}$ for $k=1,2$;
%\item $s(x)$ is a sum of multiplicatively separable functions, i.e., $s(x)=\sum_{v=1}^{d}h_{1,v}(x^{(1)})h_{2,v}(x^{(2)})$, where $h_{k,v}(x^{(k)})$ depends only on $x^{(k)}$ for $k=1,2$, $v=1,\dots,d$, $d<\infty$;
%\item $s(x)=h_{1,0}(x^{(1)})h_{2,0}(x^{(2)})$, where $h_{k,0}(x^{(k)})$ depends only on $x^{(k)}$ and is non-constant for $k=1,2$;
\item\label{asp:far5} the Fisher information $\cI_{\mX}(0)>0$. %and $\E[\{h_{k,v}(Y^{(k)})\}^2]<\infty$ for $k=1,2$, $v=1,\dots,d$.
\end{enumerate}
\end{assumption}

\begin{remark}\label{rmk:far}
In this model, $g(x)/f_0(x)$ is continuous and has compact support, which guarantees that $\cI_{\mX}(0)<\infty$. 
\end{remark}

\begin{example}\label{ex:far}
%\mbox{}
%\begin{enumerate}[label=(\alph*)]
%\item 
(Farlie alternatives)
Let $G$ in \eqref{eq:gof-model} be given as
\[
G\big(x^{(1)},x^{(2)}\big)\equiv F_{1}\big(x^{(1)}\big)F_{2}\big(x^{(2)}\big)\Big[1+\big\{1-F_{1}\big(x^{(1)}\big)\big\}\big\{1-F_{2}\big(x^{(2)}\big)\big\}\Big].
\]
Then Assumption~\ref{asp:far} is satisfied \citep{MR81575,MR99728,MR119312}. Notice also that $\E\{s(Y)|Y^{(2)}\}=0$ almost surely for $Y=(Y^{(1)},Y^{(2)})\sim F_0$. 
%\item
%\end{enumerate}
\end{example}

\begin{example}\label{ex:gof}
  Let the density $f_2$ %(x^{(2)})$
  be symmetric around $0$, and consider two univariate functions
  $h_1$ and $h_2$ that are both non-constant and bounded by
  $1$ in magnitude, with
  $h_2$ additionally being an odd function.  Let
  $f_1$ be a density such that $\int f_1(x^{(1)})h_1(x^{(1)}) \d
  x^{(1)}\ne 0$.  Then the bivariate density
  $g$ can be chosen such that $s(x)=h_1(x^{(1)})h_2(x^{(2)})$ and
  % , where
  % $h_1(x^{(1)})$ and
  % $h_2(x^{(2)})$ are both non-constant and bounded by $1$, and
  % $h_2$ is an odd function. % satisfies $h_2(x^{(2)})=-h_2(-x^{(2)})$.
  % Then,
  {then Assumption~\ref{asp:far}} holds. %as long as $\int_{-\infty}^{\infty} \{F_{2}(x^{(2)})^2-1/3\}
  % f_2(x^{(2)})h_2(x^{(2)}) \d
  % x^{(2)}\ne0$. %(e.g. $f_2(x^{(2)})=(2\pi)^{-1/2}\exp\{-(x^{(2)})^2/2\}$, $h_2(x^{(2)})=\sin(x^{(2)})$)
  % and $\int f_1(x^{(1)})h_1(x^{(1)}) \d x^{(1)}\ne
  % 0$. %(e.g. $f_1(x^{(1)}),h_1(x^{(1)})>0$),
  {For example, we can take 
  $f_1(t)=f_2(t)=1/2\times\ind(-1\le t\le 1)$, 
  $h_1(t)=|1-2\Psi(t)|$, and $h_2(t)=1-2\Psi(t)$, 
  where $\Psi$ denotes the distribution function 
  of the uniform distribution on $[-1,1]$. 
  In this case, $\E\{s(Y)|Y^{(2)}\}$ is not almost surely
  zero for $Y=(Y^{(1)},Y^{(2)})\sim F_0$.} 
\end{example}

%More examples can be found in \citet[Sec.~3]{MR3466185}. %; see also Example~\ref{ex:far} ahead.

\vspace{0.25cm}

%\begin{remark}
%The proofs for Theorems \ref{thm:powerless-far}--\ref{thm:opt-far} can be adopted to analyse {\it Fr\'echet alternatives} \citep{MR49518}, which also can be treated as special cases of { Dhar--Dassios--Bergsma alternatives} \citep[Sec.~3]{MR3466185}, and similar powerlessness/powerfulness conclusions hold for the considered four tests. We omit the details here due to their intrinsic similarity. 
%while another sets of technical conditions % required, and is hence preferred to be omitted for reading easiness.
%\end{remark}

%\subsection{Sinusoid alternatives}
%
%\[Y = \Delta\cos 8\pi X+ (1-\Delta)\epsilon\]
%where $X\sim {\rm Unif}[-1,1]$ and $\epsilon\sim N(0,1)$ are independent. 
%
%We have
%\begin{align*}
%f_{\mX}(\mx;0) &= \frac12\ind(x\in[-1,1])\cdot \Phi'(y) \\
%f_{\mX}(\mx;\Delta) &= \frac12\ind(x\in[-1,1])\cdot \Phi'\bigg(\frac{y-\Delta\cos 8\pi x}{1-\Delta}\bigg)\cdot\frac{1}{1-\Delta} \\
%L(\mx;\Delta) &=\frac{1}{1-\Delta}\Phi'\bigg(\frac{y-\Delta\cos 8\pi x}{1-\Delta}\bigg)\bigg/\Phi'(y)\\
%L'(\mx;\Delta) &=\bigg[\frac{1}{(1-\Delta)^2}\Phi'\bigg(\frac{y-\Delta\cos 8\pi x}{1-\Delta}\bigg)\\
%&\quad+\frac{1}{1-\Delta}\Phi''\bigg(\frac{y-\Delta\cos 8\pi x}{1-\Delta}\bigg)\frac{-\cos 8\pi x(1-\Delta)+(y-\Delta\cos 8\pi x)}{(1-\Delta)^2}\bigg]\bigg/\Phi'(y)\\
%L'(\mx;0) &=[\Phi'(y)+\Phi''(y)(y-\cos 8\pi x)]/\Phi'(y)=1-y(y-\cos 8\pi x)
%\end{align*}

For a local power analysis in any one of the {two} considered
alternative families, we examine the asymptotic power along a respective sequence of
alternatives obtained as
\begin{align}\label{eq:local-alternative}
H_{1,n}(\Delta_0):\Delta=\Delta_n,  \text{ where } \Delta_n\equiv n^{-1/2}\Delta_0
\end{align}
with some constant $\Delta_0> 0$.
% Testing the null
% hypothesis then reduces to testing 
% \[
% H_0: \Delta_0 = 0~~~\text{versus}~~~H_1: \Delta_0\ne 0.
% \]
% For any such sequence, the joint distribution of an $n$-sample
% determined by $\Delta_n$
% % in the family (A), (B), and (C)
% will be denoted by $\pr_{\Delta_n}.$
% $P_{\Delta_n}^{(A)}$, $P_{\Delta_n}^{(B)}$, and $P_{\Delta_n}^{(C)}$, respectively.  
We obtain the following results on %the power of
the discussed tests.

\begin{theorem}[Power analysis]\label{thm:power}
  Suppose the considered sequences of local alternatives are formed
  such that Assumption~\ref{asp:kon} {or} \ref{asp:far}
  holds when considering a family of type (A) {or} (B), respectively.  Then concerning any sequence  of alternatives given in \eqref{eq:local-alternative}, %with $\Delta_0= C_0$, %as $n\to\infty$:
% Concerning any one of the three local alternative families (A)--(C), as long as the corresponding assumption (Assumption~\ref{asp:kon}, \ref{asp:far}, or \ref{asp:gof}) holds, we have, 
\begin{enumerate}[label=(\roman*)]
\item for any one of the {two} types of alternatives (A) {or} (B), and
  any fixed constant $\Delta_0>0$,
\[
\lim_{n\to\infty}\pr\{{T}^{\xi_n}_{\alpha}=1\mid H_{1,n}(\Delta_0)\}=\alpha;
\yestag\label{eq:subopt}
\]
%where the probability is computed for the sequence of alternatives given by $\Delta_n$ with $\Delta_0= C_0$;
%\item for any one of local alternative families (A) and (B), and any fixed constant $\Delta_0>0$,
%\[
%\lim_{n\to\infty}\pr\{{T}^{\xi_n^*}_{\alpha}=1\mid H_{1,n}(\Delta_0)\}=\alpha;
%\yestag\label{eq:subopt-dette}
%\]
%%where the probability is computed for the sequence of alternatives given by $\Delta_n$ with $\Delta_0= C_0$;
%\item for local alternative family (C) and any number $\beta>0$, there exists some sufficiently large constant 
%$C_\beta>0$ only depending on $\beta$%{, but not, in specific, on $K,h_1,h_2$ in defining $\xi_n^*$ if all assumptions stated in Definition~\ref{def:dette} hold,}
%such that, as long as $\Delta_0>C_\beta$,
%\[
%\lim_{n\to\infty}\pr\{{T}^{\xi_n^*}_{\alpha}=1\mid H_{1,n}(\Delta_0)\}\ge1-\beta;
%\yestag\label{eq:opt-dette}
%\]
%{recall here that the tuning parameters $h_1,h_2$ in $\xi_n^*$ have been assumed to scale as in \eqref{eq:tuning};}
%%where the probability is computed for the sequence of alternatives given by $\Delta_n$ with any fixed $\Delta_0\ge C_\beta$; 
\item for any local alternative family and any number $\beta>0$, 
there exists some sufficiently large constant 
$C_\beta>0$ only depending on $\beta$ such that, as long as $\Delta_0>C_\beta$,
\[
\lim_{n\to\infty}\pr\{{T}^{\mu_n}_{\alpha}=1\mid H_{1,n}(\Delta_0)\}\ge1-\beta,
\yestag\label{eq:opt}
\]
where $\mu_n\in\{D_n, R_n, \tau^*_n\}$. %, 
%and the probability is computed for the sequence of alternatives given by $\Delta_n$ with any fixed $\Delta_0\ge C_\beta$. 
\end{enumerate}
\end{theorem}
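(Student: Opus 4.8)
The plan is to route everything through Le~Cam's asymptotic theory. First I would establish local asymptotic normality of both alternative families at $\Delta=0$: a computation of the Konijn type \citep{MR79384} shows that the rotation family~\eqref{eq:rotamodel} is quadratic-mean-differentiable at $0$ with score $\dot\ell(\mx)=-\{x^{(2)}\rho_1(x^{(1)})+x^{(1)}\rho_2(x^{(2)})\}$, while the mixture family~\eqref{eq:gof-model} is affine in $\Delta$ and hence trivially quadratic-mean-differentiable with score $\dot\ell=s$. In either case, under $H_0=P_0^{(n)}$ one obtains $\Lambda_n:=\log(\d P_{\Delta_n}^{(n)}/\d P_0^{(n)})=\Delta_0 n^{-1/2}\sum_{i=1}^n\dot\ell(\mX_i)-\tfrac12\Delta_0^2\cI_{\mX}(0)+o_{P_0}(1)$, with $\E_0[\dot\ell(\mX)]=0$ and $\cI_{\mX}(0)=\E_0[\dot\ell(\mX)^2]\in(0,\infty)$ guaranteed by Assumption~\ref{asp:kon}, respectively Assumption~\ref{asp:far} together with boundedness of $s$ (Remark~\ref{rmk:far}); consequently $P_{\Delta_n}^{(n)}$ and $P_0^{(n)}$ are mutually contiguous.

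For Claim~\eqref{eq:subopt}, the crux is an asymptotic representation of $n^{1/2}\xi_n$ under $H_0$. Following the projected-statistic device of \citet{MR1378827} (in the spirit of \citealp{shi2020rate}), I would establish $n^{1/2}\xi_n=\mathbb H_n+o_{P_0}(1)$ for a suitable projected statistic $\mathbb H_n$, prove $\mathbb H_n\to N(0,2/5)$ by a conditional multiplier central limit theorem given the order statistics of $X^{(1)}_1,\dots,X^{(1)}_n$ (cf.~Chapter~2.9 in \citealp{MR1385671}) --- thereby reproving Theorem~2.1 of \citet{chatterjee2020new} --- and, decisively, show that the H\'ajek projection of $\mathbb H_n$ onto the space of sums of single-coordinate functions is $o_{P_0}(1)$; heuristically, the two boundary-free contributions of each consecutive-rank increment $|r_{[i+1]}-r_{[i]}|$ that depend on $F_2(X^{(2)}_i)$ cancel, so $\xi_n$ carries no linear component at scale $n^{-1/2}$. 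By orthogonality of the Hoeffding components this forces $\cov_{P_0}(\mathbb H_n,\,n^{-1/2}\sum_i\dot\ell(\mX_i))\to0$, so $(n^{1/2}\xi_n,\Lambda_n)$ is jointly asymptotically Gaussian under $H_0$ with vanishing covariance (by Cram\'er--Wold, again via the conditional multiplier central limit theorem). Le~Cam's third lemma then gives $n^{1/2}\xi_n\to N(0,2/5)$ under $H_{1,n}(\Delta_0)$ for every $\Delta_0>0$ and either family, whence $\pr\{T^{\xi_n}_\alpha=1\mid H_{1,n}(\Delta_0)\}\to\pr\{|N(0,1)|>z_{1-\alpha/2}\}=\alpha$.

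For Claim~\eqref{eq:opt}, I would use that $D_n$, $R_n$, $\tau^*_n$ are U-statistics unbiased for $D$, $R$, $\tau^*$, so $\E_{H_{1,n}}[\mu_n]=\mu(F_n)$ with $F_n$ the joint law under $H_{1,n}(\Delta_0)$. Since each of $D$, $R$, $\tau^*$ is a quadratic functional (an integral of $(F-F_1F_2)^2$) that vanishes together with its first variation at every product law, expanding along the local path yields $n\,\mu(F_n)\to\Delta_0^2\kappa_\mu$, where $\kappa_D=\int H^2\,\d F_0$, $\kappa_R=\int H^2\,\d F_1\,\d F_2$, and $\kappa_{\tau^*}=12\kappa_D+24\kappa_R$ (Remark~\ref{remark:identity}), with $H$ the $\Delta$-linear part of $F-F_1F_2$ along the path; in case (B) the path is affine with frozen marginals, making this an exact identity, whereas in case (A) the remainder is controlled by the smoothness behind quadratic-mean-differentiability. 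Strict positivity $\kappa_\mu>0$ follows because $H$ is the iterated integral of the continuous, compactly supported signed density $f_0\{s-\E[s\mid X^{(2)}]\}$ in case (B) (with an analogous expression in case (A)), and Assumptions~\ref{asp:far}\ref{asp:far2},\ref{asp:far4}, respectively Assumption~\ref{asp:kon}\ref{asp:kon3}, force $H\not\equiv0$, while $f_0>0$ on its support. On the other hand, by Proposition~\ref{prop:cha}(iii) the sequence $n\mu_n$ is tight under $H_0$, hence tight under $H_{1,n}(\Delta_0)$ by the contiguity above; writing $n\mu_n=n\E_{H_{1,n}}[\mu_n]+R_{n,\mu}$ with $\E_{H_{1,n}}[R_{n,\mu}]=0$, the remainder $R_{n,\mu}$ is tight under $H_{1,n}(\Delta_0)$. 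Given $\beta>0$, choose $M_\beta$ with $\limsup_n\pr_{H_{1,n}}(R_{n,\mu}<-M_\beta)\le\beta$ for all $\mu\in\{D,R,\tau^*\}$ and set $C_\beta:=\big\{\max_\mu(q^\mu_{1-\alpha}+M_\beta)^+/\kappa_\mu\big\}^{1/2}$; for $\Delta_0>C_\beta$ one has $n\E_{H_{1,n}}[\mu_n]-M_\beta>q^\mu_{1-\alpha}$ for all large $n$, so $\pr\{T^{\mu_n}_\alpha=1\mid H_{1,n}(\Delta_0)\}=\pr_{H_{1,n}}(n\mu_n>q^\mu_{1-\alpha})\ge1-\beta$ in the limit, which is~\eqref{eq:opt}.

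The main obstacle is the representation behind~\eqref{eq:subopt}. Chatterjee's route to asymptotic normality, via the permutation central limit theorem of \citet{chao_bai_liang_1993}, does not exhibit $\xi_n$ as a conditionally linear statistic and so does not plug into Le~Cam's third lemma; the work lies in constructing the projected statistic $\mathbb H_n$, proving its conditional central limit theorem, and verifying the cancellation that annihilates its H\'ajek projection --- that is, that $\xi_n$ carries no usable linear signal at the $n^{-1/2}$ scale, which is precisely what makes its test rate sub-optimal while those based on $D_n$, $R_n$, $\tau^*_n$ are not. By comparison, Claim~\eqref{eq:opt} is essentially classical degenerate-U-statistic asymptotics under contiguous alternatives, the only delicate points being the remainder control in the Taylor expansion of $D$, $R$, $\tau^*$ in case (A) and the strict positivity of the coefficients $\kappa_\mu$.
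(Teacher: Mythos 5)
For Claim~\eqref{eq:subopt}, your plan is essentially the paper's plan, and the pieces are all present. Both proofs rest on the asymptotic representation $n^{1/2}\xi_n = -3\, n^{-1/2}\sum_{i=1}^{n-1}\Xi_{[i]}+o_{P_0}(1)$ with $\Xi_{[i]}$ the projected statistic from \citet{MR1378827}; a CLT for the $1$-dependent sequence $\{\Xi_{[i]}\}$ \emph{conditionally on the $X^{(1)}$-order} (this is what the conditional-multiplier CLT is doing); vanishing asymptotic covariance with the score $\Lambda_n$; and Le~Cam's third lemma. Your H\'ajek-projection framing of the covariance step is a correct and in fact illuminating reformulation of the paper's direct conditional-covariance computation: the identity
\[
\E\big[\Xi_{[i]}\,\big|\,Y^{(2)}_{[i]}\big]=\E\big[\Xi_{[i]}\,\big|\,Y^{(2)}_{[i+1]}\big]=0
\]
holds exactly (conditional on the $Y^{(1)}$'s) because $u^2/2+(1-u)^2/2+u(1-u)=1/2$ is constant, which annihilates the first-order projection of $\sum_i\Xi_{[i]}$ and hence its conditional covariance with any $\sum_j g_j(Y^{(2)}_{[j]})$. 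One refinement worth stating explicitly: Assumption~\ref{asp:kon} (via Remark~\ref{rmk:kon}) and Assumption~\ref{asp:far}\ref{asp:far2} give $\E\{L'(\mY;0)\mid Y^{(1)}\}=0$, and this is \emph{not} what makes the covariance vanish (that is the algebraic degeneracy of $\Xi_{[i]}$ just described); rather, it centers $T_n$ conditionally on $Y^{(1)}_1,\dots,Y^{(1)}_n$, which is what allows the conditional CLT to be stated with a zero-mean Gaussian limit in the first place.

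For Claim~\eqref{eq:opt}, your route is genuinely different from the paper's and, as written, has a gap. The paper obtains the joint limit of $(n\mu_n,\Lambda_n)$ under $H_0$ via the Hoeffding decomposition and the Fredholm eigen-expansion of the degenerate kernel, passes to the alternative by Le~Cam's third lemma, and then bounds the limiting type~II error explicitly by tracking a single eigen-coordinate $\xi_{v^*}$ with $d_{v^*}\ne 0$. You instead write $n\mu_n = n\,\E_{H_{1,n}}[\mu_n]+R_{n,\mu}$, argue $n\,\E_{H_{1,n}}[\mu_n]\to\Delta_0^2\kappa_\mu>0$, and then invoke tightness of $R_{n,\mu}$ under $H_{1,n}(\Delta_0)$. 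The difficulty is that the tightness constant $M_\beta$ you choose necessarily depends on $\Delta_0$: under $H_{1,n}(\Delta_0)$ the limiting law of $R_{n,\mu}$ (read off from the paper's Le~Cam computation) is $\sum_v\lambda_v\{(\xi_v+c_v(\Delta_0^2\cI)^{1/2})^2-1\}$ minus its mean, whose standard deviation grows like $\Delta_0$; thus there is no single $M_\beta$ valid for all $\Delta_0>C_\beta$, and the construction ``choose $M_\beta$, then set $C_\beta$ in terms of $M_\beta$'' is circular. To close this you would either need a quantitative fluctuation bound such as $\Var_{H_{1,n}}(n\mu_n)=O(\Delta_0^2)$ uniformly in $n$ (true, and extractable from the Hoeffding decomposition since the degeneracy is broken only at order $\Delta_n$, so the first-order component has variance of order $\Delta_n^2/n$), after which Chebyshev with $M_\beta\asymp\Delta_0$ and $\Delta_0^2\kappa_\mu\gg\Delta_0$ for large $\Delta_0$ does the job --- or you could follow the paper's approach, which sidesteps the issue entirely by identifying the alternative limit exactly. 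A secondary, more minor issue: in case (B) the second marginal of $F_n$ is generally \emph{not} frozen (Assumption~\ref{asp:far}\ref{asp:far2} pins down only the first marginal of $G$), so the identity $n\mu(F_n)=\Delta_0^2\kappa_\mu$ is not exact even along the affine path, and your reduction to an equality needs the same remainder control you already flag for case (A).
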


{In contrast to Theorem \ref{thm:power}, Proposition~\ref{thm:opt}
  below shows that the power of any size-$\alpha$ test can be
  arbitrarily close to $\alpha$ when $\Delta_0$ is sufficiently small
  in the local alternative model $H_{1,n}(\Delta_0)$. This result
  combined with \eqref{eq:subopt} and \eqref{eq:opt} manifests that %, and Claim~\eqref{eq:opt} shows that 
the size-$\alpha$ tests based on one of $D_n,R_n,\tau^*_n$ %can detect the local alternative $H_{1,n}(\Delta_0)$ from the null with power arbitrarily close to $1$ when $\Delta_0$ is sufficiently large. Hence we call these tests 
are rate-optimal against the considered local alternatives, %. In contrast, Claim~\eqref{eq:subopt} shows that 
while the size-$\alpha$ test based on Chatterjee's correlation coefficient, with only trivial power against the local alternative model $H_{1,n}(\Delta_0)$ for any fixed $\Delta_0$, %, and thus we call the test based on Chatterjee's correlation coefficient 
is rate sub-optimal. }

\begin{proposition}[Rate-optimality]\label{thm:opt}
Concerning any one of the {two} local alternative families and any
sequence  of alternatives given in \eqref{eq:local-alternative}, as
long as the corresponding %assumption 
Assumption~\ref{asp:kon} {or} \ref{asp:far} holds, we have that  for any number $\beta>0$ satisfying $\alpha+\beta<1$ there exists a constant $c_\beta>0$ only depending on $\beta$ such that
\[
\inf_{\overline{{T}}_{\alpha}\in\cT_{\alpha}}\pr\{\overline{{T}}_{\alpha}=0\mid H_{1,n}(c_\beta)\}\geq 1-\alpha-\beta
\]
for all sufficiently large $n$. Here the infimum is
taken over all size-$\alpha$
tests. %, and the supremum is taken over all distributions $\mX$ with $\Delta_n$ such that $|\Delta_0|\ge c_\beta$.
\end{proposition}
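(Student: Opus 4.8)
The plan is to establish Proposition~\ref{thm:opt} by a Le~Cam argument: the local alternatives $H_{1,n}(\Delta_0)$ are contiguous to the null, and in the limiting Gaussian-shift experiment the power of any level-$\alpha$ test at a shift proportional to $\Delta_0$ tends to $\alpha$ as $\Delta_0\downarrow0$. I would first record that, in each of the two families, the one-parameter model $\Delta\mapsto P_\Delta$, with $P_\Delta$ the law of a single observation $\mX$, is differentiable in quadratic mean at $\Delta=0$ with finite, strictly positive Fisher information $\cI_{\mX}(0)$. For the mixture family~(B) this is immediate: the density is $f_0(\mx)\{1+\Delta\, s(\mx)\}$, affine in $\Delta$ with bounded continuous score $s$ (Remark~\ref{rmk:far}), and $\cI_{\mX}(0)=\E_{F_0}\{s(Y)^2\}\in(0,\infty)$ by Assumption~\ref{asp:far}\ref{asp:far5}. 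For the rotation family~(A), substituting $\mY=\fA_\Delta^{-1}\mX$ gives $f_{\mX}(\mx;\Delta)=(1-\Delta^2)^{-1}f_1(y^{(1)})f_2(y^{(2)})$ with $y=\fA_\Delta^{-1}\mx$; differentiating $\log f_{\mX}$ at $\Delta=0$ produces the score $\dot\ell_0(\mx)=-x^{(2)}\rho_1(x^{(1)})-x^{(1)}\rho_2(x^{(2)})$, which is centred and square-integrable with $0<\cI_{\mX}(0)<\infty$ by Assumption~\ref{asp:kon}\ref{asp:kon2},\ref{asp:kon3} and Remark~\ref{rmk:kon}, and differentiability in quadratic mean then follows from absolute continuity of $f_1,f_2$ together with those moment conditions, by the standard argument for smoothly reparametrised location-type families. (This regularity is in any case already on hand, being exactly what underlies the use of Le~Cam's third lemma in the proof of Theorem~\ref{thm:power}.)

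Next I would carry out the power bound at the level of total variation, which keeps things self-contained. Fix a family, write $P_\Delta^{\otimes n}$ for the joint law of the sample, and set $\Delta_n=n^{-1/2}\Delta_0$. Since the $\Delta=0$ member of the family lies in $\cF^{\mc}$ and has independent coordinates, $P_0^{\otimes n}$ belongs to the null hypothesis, so every $\overline{T}_\alpha\in\cT_\alpha$ satisfies $\E_{P_0^{\otimes n}}[\overline{T}_\alpha]\le\alpha$ and hence
\[
\E_{P_{\Delta_n}^{\otimes n}}\bigl[\overline{T}_\alpha\bigr]\;\le\;\E_{P_0^{\otimes n}}\bigl[\overline{T}_\alpha\bigr]+\mathrm{TV}\bigl(P_{\Delta_n}^{\otimes n},P_0^{\otimes n}\bigr)\;\le\;\alpha+\mathrm{TV}\bigl(P_{\Delta_n}^{\otimes n},P_0^{\otimes n}\bigr).
\]
As the right-hand side is test-independent, $\sup_{\overline{T}_\alpha\in\cT_\alpha}\E_{P_{\Delta_n}^{\otimes n}}[\overline{T}_\alpha]\le\alpha+\mathrm{TV}(P_{\Delta_n}^{\otimes n},P_0^{\otimes n})$, and it remains to control the total-variation distance between the two product measures. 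Differentiability in quadratic mean gives $n\,H^2(P_{\Delta_n},P_0)\to\tfrac14\Delta_0^2\,\cI_{\mX}(0)$, where $H^2(P,Q)\equiv\int(\sqrt{\d P}-\sqrt{\d Q})^2$, so the Hellinger affinity of the $n$-fold products, $\{1-\tfrac12 H^2(P_{\Delta_n},P_0)\}^n$, converges to $\exp\{-\tfrac18\Delta_0^2\,\cI_{\mX}(0)\}$, and therefore
\[
\limsup_{n\to\infty}\mathrm{TV}\bigl(P_{\Delta_n}^{\otimes n},P_0^{\otimes n}\bigr)\;\le\;\Bigl\{1-\exp\bigl(-\tfrac14\Delta_0^2\,\cI_{\mX}(0)\bigr)\Bigr\}^{1/2}\;=:\;\pi(\Delta_0),
\]
with $\pi(\Delta_0)\downarrow0$ as $\Delta_0\downarrow0$. (One could instead invoke weak convergence of the experiments $(P_{n^{-1/2}h}^{\otimes n})_{h}$ to the Gaussian-shift experiment with information $\cI_{\mX}(0)$ and the asymptotic representation theorem, yielding the sharper $\limsup_n\sup_{\overline{T}_\alpha\in\cT_\alpha}\E_{P_{\Delta_n}^{\otimes n}}[\overline{T}_\alpha]\le 1-\Phi\{z_{1-\alpha}-\Delta_0\,\cI_{\mX}(0)^{1/2}\}$, whose right side decreases to $\alpha$ as $\Delta_0\downarrow0$; either version suffices.)

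To conclude, given $\beta>0$ with $\alpha+\beta<1$, I would choose $c_\beta>0$ small enough that $\pi(c_\beta)<\beta$, which is possible since $\pi(\Delta_0)\downarrow0$; this $c_\beta$ depends only on $\beta$ (for the given, fixed alternative family, through $\cI_{\mX}(0)$), not on $n$ or on the test. Taking $\Delta_0=c_\beta$ in the displays above, for all sufficiently large $n$ we obtain $\sup_{\overline{T}_\alpha\in\cT_\alpha}\E_{P_{\Delta_n}^{\otimes n}}[\overline{T}_\alpha]\le\alpha+\beta$, equivalently
\[
\inf_{\overline{T}_\alpha\in\cT_\alpha}\pr\bigl\{\overline{T}_\alpha=0\mid H_{1,n}(c_\beta)\bigr\}\;\ge\;1-\alpha-\beta,
\]
which is the claim. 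The only genuinely non-routine ingredient is the differentiability-in-quadratic-mean verification for family~(A) — it is trivial for family~(B) — but this is already subsumed in the LAN structure needed for Theorem~\ref{thm:power}, after which only elementary Hellinger bookkeeping remains; I do not anticipate real difficulty.
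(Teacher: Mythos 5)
Your proposal is correct and takes essentially the same route as the paper: bound $\mathrm{TV}(F^{(a)},F^{(0)})$ via a multiplicative $f$-divergence of the product measures and shrink $\Delta_0$. The paper does exactly this with the Hellinger distance for family (A), using $n\E[1-L^{1/2}(\mY_i;\Delta_n)]\to\Delta_0^2\cI_{\mX}(0)/8$ and the product identity for Hellinger affinity, which is your quadratic-mean-differentiability computation in different notation. The only divergence is cosmetic: for family (B) the paper switches from Hellinger to the $\chi^2$-distance, exploiting that the mixture score is exactly affine in $\Delta$, so $\chi^2\bigl(F^{(a)}_i,F^{(0)}_i\bigr)=\Delta_n^2\chi^2(G,F_0)$ holds exactly (not merely asymptotically) and the bookkeeping becomes an identity rather than a limit; your Hellinger route for (B) works equally well. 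Your remark that DQM for family (A) is the one non-trivial ingredient, but is already subsumed in the LAN structure used elsewhere, matches the paper's stance (it defers to the H\'ajek--\v{S}id\'ak--Sen style computation it cites).
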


{
  \begin{remark}\label{rmk:explain}
    Assumptions~\ref{asp:kon} and \ref{asp:far} are
technical conditions imposed to ensure that (i) the two considered
sequences of alternatives are all locally asymptotically normal
\citep[Chapter~7]{MR1652247}, i.e., the log likelihood ratio processes
admit a quadratic expansion; (ii) the conditional expectation of the
score function given the first margin is almost surely zero. Here the
second requirement was invoked to allow for a use of the conditional
multiplier central limit theorem (cf. Chapter~2.9 in \citealp{MR1385671})
that appears to be the key in
analysing the power of Chatterjee's correlation coefficient.  In
addition to their generality, we would like to emphasize that these
technical assumptions are indeed satisfied by important models such as
Gaussian rotation and Farlie alternatives, which are commonly used to
investigate local power of independence tests.
    \end{remark}
    }

% {
% \begin{remark}\label{rmk:explain}
% Assumptions~\ref{asp:kon}, \ref{asp:far}, and \ref{asp:gof} are technical conditions posed to ensure that (i) the three considered sequences of alternatives are all locally asymptotically normal \citep[Chapter~7]{MR1652247}, i.e., the log likelihood ratio processes admit a quadratic expansion; (ii) conditional expectation of the score function given the first margin is almost surely zero. Here the second requirement was invoked to allow for a use of the conditional multiplier central limit theorem that appears to be the key in analysing the power of Chatterjee's correlation coefficient. In addition to their generality, we would love to highlight that these technical assumptions are satisfied by important independence-testing models like Gaussian rotation and Farlie alternatives, where local powers are commonly compared on.
% \end{remark}
% }

{
\begin{remark}\label{rmk:morealter}
We note that the linear, step function, W-shaped,
sinusoid, and circular alternatives considered in
\citet[Section~4.3]{chatterjee2020new} can all be viewed as generalized rotation
alternatives. The proof techniques used in this paper are hence
directly applicable to these five alternatives by means of a re-parametrization. 
To illustrate this point, consider, for example, the following alternative motivated by \citet[Section~4.3]{chatterjee2020new}:
%\begin{equation}\label{eq:newmodel}
%X^{(2)} = \Delta g(X^{(1)})+ \epsilon,
%\end{equation}
%where $X^{(1)}\sim {\rm Unif}[-1,1]$ and $\epsilon\sim N(0,1)$ are independent, 
%and $g$ is a non-constant measurable function such that $\E[\{g(X^{(1)})\}^2] <\infty$. 
\begin{equation}\label{eq:chattmodel}
X^{(1)} = Y^{(1)}~~~\text{and}~~~
X^{(2)} = \Delta g(Y^{(1)})+ Y^{(2)},
\end{equation}
where $Y^{(1)}$ and $Y^{(2)}$ are independent and absolutely
continuous with respective densities $f_1,f_2$. {Notice that model \eqref{eq:chattmodel} and the one used in \citet[Section~4.3]{chatterjee2020new} are equivalent for rank-based tests as ranks are scale invariant.} Assume then that
\begin{enumerate}[label=(\roman*)]
\item the distributions of $\mX=(X^{(1)},X^{(2)})$ have a common support for all $\Delta\in(-1,1)$;
\item the density $f_2$ is absolutely continuous with non-constant logarithmic derivative $\rho_2\equiv f'_2/f_2$ with $0<\E[\{\rho_2(Y^{(2)})\}^2] <\infty$; 
\item the function $g$ is non-constant and measurable such that $0<\E[\{g(Y^{(1)})\}^2] <\infty$. 
\end{enumerate}
%We have
%\begin{align*}
%f_{ X}( x;0)      &= \frac12\ind(x^{(1)}\in[-1,1])\cdot \varphi(x^{(2)}), \\
%f_{ X}( x;\Delta) &= \frac12\ind(x^{(1)}\in[-1,1])\cdot \varphi(x^{(2)}-\Delta\cos 8\pi x^{(1)}), \\
%L ( x;\Delta) &=\varphi(x^{(2)}-\Delta\cos 8\pi x^{(1)})/\varphi(x^{(2)}), \\
%L'( x;\Delta) &=\varphi'(x^{(2)}-\Delta\cos 8\pi x^{(1)})(-\cos 8\pi x^{(1)})/\varphi(x^{(2)}), \\
%L'( x;0) &=x^{(2)}(\cos 8\pi x^{(1)}). 
%\end{align*}
%and examine the asymptotic power along a respective sequence of alternatives obtained as
%\[
%H_{1,n}(\Delta_0):\Delta=\Delta_n,  \text{ where } \Delta_n\equiv n^{-1/2}\Delta_0
%\]
%with some constant $\Delta_0> 0$. 
%The proof techniques for family~(A) can be adopted to analyse the alternatives \eqref{eq:newmodel}, and similar powerlessness/powerfulness conclusions of the considered five tests for family (A) stated in Theorem~\ref{thm:power} hold for the alternatives \eqref{eq:newmodel}. 
Claims~\eqref{eq:subopt} and \eqref{eq:opt} will then hold for the
alternatives \eqref{eq:chattmodel} in observation of arguments similar to
those made in the proof of Theorem~\ref{thm:power} for the rotation alternatives (A). 
%Claim~\eqref{eq:subopt-dette} holds for the alternatives \eqref{eq:newmodel} if $\E\{g(Y^{(1)})\}=0$; 
%Claim~\eqref{eq:opt-dette} holds for the alternatives \eqref{eq:newmodel} if $\E\{g(Y^{(1)})\}\ne0$ and $\E[\{F_{2}(Y^{(2)})\}^2\rho_2(Y^{(2)})]\ne 0$. 
%The claim about Chatterjee's correlation coefficient is also recorded in \citet[Remark~4.15]{cao2020correlations}. 
\end{remark}
}

%\begin{remark}\label{rmk:CB1}
%  In
%  particular, Assumption II on Page 24 of
%  \citet{cao2020correlations} appears to rule out some
%  interesting %and sufficiently general
%  % local
%  alternatives.  For instance, the mixture alternatives in our
%  Class II do not satisfy Assumption II (see, also, their Remark 4.19). %, and their Lemma~4.13 no
% % longer holds in this case.  
% In this class
%  we found the test using Chatterjee's $\xi_n$ to be rate
%  sub-optimal, recalling \eqref{eq:subopt}, while the one given by
%  Dette--Siburg--Stoimenov's $\xi_n^*$ is rate optimal, recalling
%  \eqref{eq:opt-dette}.
%\end{remark}

\begin{remark}\label{rmk:CB2}
  \citet[Section~4.4]{cao2020correlations} performed {a} local power
  analysis for Chatterjee's $\xi_n$ %and Dette--Siburg--Stoimenov's $\xi_n^*$ 
  under a set of assumptions that differs from ours.  
  The goal of our local power analysis was to exhibit explicitly the{,} 
  at times surprising{,} differences in power of the independence tests
  given by the {four} rank correlation coefficients from
  Definitions~\ref{eg:cha}{, \ref{eg:hd}}--\ref{eg:bdy}.  To this end, we focused on
  rotation and mixture 
  alternatives from the literature.  However, from the proof techniques in
  Section~\ref{subsec:proof-power} {of the supplementary material}, it is evident that
  Claims~\eqref{eq:subopt} and \eqref{eq:opt} hold for further types
  of local alternative families.  For the former claim, which concerns
  lack of power of
  Chatterjee's $\xi_n$, this point has been pursued in Section 4.4 of
  % ; the former observation has been
  % discussed in Section 4.4. (cf. Remark 4.15 therein) of
  \citet{cao2020correlations}.
  % In this paper, however, we only discuss
  % in depth the two types (rotation and mixture) of classical local
  % alternative families in order to present the rate sub-optimality as
  % well as rate optimality of the five rank correlation coefficients
  % defined in Definitions~\ref{eg:cha}--\ref{eg:bdy} explicitly.
\end{remark}

% \begin{remark}
% In view of the proofs in Section~\ref{subsec:proof-power}, it is evident that Claims~\eqref{eq:subopt} and \eqref{eq:opt} hold for more local alternative families; the former observation has been discussed in Section 4.4. (cf. Remark 4.15 therein) of \citet{cao2020correlations}. In this paper, however, we only discuss in depth the two types (rotation and mixture) of  classical local alternative families in order to present the rate sub-optimality as well as rate optimality of the five rank correlation coefficients defined in Definitions~\ref{eg:cha}--\ref{eg:bdy} explicitly. 
% \end{remark}

\section{Rank correlations for discontinuous distributions}\label{sec:newtie}

In this section, we drop the continuity assumption of $F$ made in Sections~\ref{sec:prelim}--\ref{sec:subopt}, and allow for ties to exist with a nonzero probability. 
Among the five correlation coefficients, $\xi_n^*$ is no longer an
appropriate estimator when $F$ is not continuous. 
We will only discuss the properties of the other four estimators $\xi_n$, $D_n$, $R_n$, and $\tau_n^*$.

Recall that the computation issue has been address in Remark \ref{remark:tie-computation}. Our first result in this section focuses on approximation consistency of the correlation coefficients $\xi_n$, $D_n$, $R_n$ and $\tau^*_n$ to their population quantities. % are still  consistent regardless of the continuity of the  distribution. 
To this end, we define 
the families of distribution more general than the ones considered so
far as follows:  
\begin{align*}
\cF&\equiv\bigl\{F: F\text{ is a bivariate distribution function}\bigr\},\\
\cF^{*}&\equiv\bigl\{F: F_k\text{ is not degenerate, i.e., $F_k(x)\ne I(x \ge x_0)$ for any real number $x_0$ for $k=1,2$}\bigr\},\\
\cF^{\tau^*}&\equiv\bigl\{F: F\text{ is discrete, continuous, or a mixture of}\\ 
&\qquad\qquad\text{discrete and jointly absolutely continuous distribution functions}\bigr\}.\yestag\label{eq:bifamily2}
\end{align*}

For the estimators $\xi_n$, $D_n$, $R_n$, and $\tau_n^*$, the following result on consistency can be given.

\begin{proposition}[Consistency of estimators]
%Proposition~\ref{prop:strong}(i) still holds if $X^{(2)}$ is not almost surely a constant (Theorem~1.1 in \citealp{chatterjee2020new}). 
%Proposition~\ref{prop:strong}(iii) still hold for all bivariate distributions
%(Proposition~1 in \citealp{MR3842884} and Theorem~5.4.A in \citealp{MR595165}) and no continuity assumption is required at all. 
As $n\to\infty$, we have
\begin{enumerate}[label=(\roman*)]
\item for $F\in\cF^{*}$, $\xi_n$ converges in probability to $\xi$ (Theorem~1.1 in \citealp{chatterjee2020new}); 
\item for $F\in\cF$, $\mu_n$ converges in probability to $\mu$ for $\mu\in\{D,R,\tau^*\}$ (Proposition~1 in \citealp{MR3842884}, Theorem~5.4.A in \citealp{MR595165}). 
\end{enumerate}
%It also holds that $\E \mu_n =\mu$ for $\mu\in\{D,R,\tau^*\}$ and $n\ge6$ (Proposition~1 in \citealp{MR3842884}, Section~5.1.1 in \citealp{MR595165}). 
\end{proposition}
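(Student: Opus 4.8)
The plan is to obtain both claims directly from existing results, since each coefficient is covered either by its originating paper or by classical U-statistic theory; there is essentially no new work to do, and the proposition functions as a consolidation step for the discontinuous regime.

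For Claim~(i) I would simply cite Theorem~1.1 of \citet{chatterjee2020new}, which already asserts $\xi_n\to\xi$ almost surely (hence in probability) as soon as the law of $X^{(2)}$ is non-degenerate; since $F\in\cF^{*}$ forces both margins to be non-degenerate, this hypothesis is met. The only points worth spelling out are that the limit $\xi$ is well defined on $\cF^{*}$ — the normalizing integral $\int\var\{\ind(X^{(2)}\ge x)\}\,\d F_2(x)$ in Definition~\ref{eg:cha} is strictly positive exactly when $X^{(2)}$ is not almost surely constant — and that, for all large $n$, the denominator $2\sum_{i=1}^{n}\ell_{[i]}(n-\ell_{[i]})$ in~\eqref{eq:chatt:ties} is almost surely nonzero, so $\xi_n$ is well defined along the sequence. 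No continuity of $F$ is needed here: ties are handled by the general formula~\eqref{eq:chatt:ties} together with the at-random tie-breaking in the ordering $X^{(1)}_{[1]}\le\cdots\le X^{(1)}_{[n]}$.

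For Claim~(ii) I would argue that $D_n$, $R_n$, and $\tau^*_n$ are, by Definitions~\ref{eg:hd}--\ref{eg:bdy}, U-statistics of fixed orders whose symmetrized kernels are finite linear combinations of products of indicator functions, hence bounded by an absolute constant uniformly over all $F\in\cF$. The moment condition $\E\lvert h\rvert<\infty$ of the strong law of large numbers for U-statistics (Theorem~5.4.A in \citealp{MR595165}; see also Proposition~1 in \citealp{MR3842884}) is therefore automatic for every bivariate distribution function, which yields almost sure — in particular in-probability — convergence of $D_n,R_n,\tau^*_n$ to their means. Since these estimators are unbiased by construction for the population functionals $D$, $R$, $\tau^*$ of Definitions~\ref{eg:hd}--\ref{eg:bdy}, and each such functional is well defined for arbitrary $F\in\cF$ (being the integral of a bounded function against a probability measure), the limits are precisely $D$, $R$, $\tau^*$.

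I do not anticipate any genuine obstacle. The only mild care points are (a) checking that $\xi_n$ and $\xi$ remain well defined outside the continuous setting, which reduces to the non-degeneracy built into $\cF^{*}$; and (b) observing that the U-statistic law of large numbers applies verbatim in the presence of ties, since boundedness of the kernels makes the moment hypothesis hold unconditionally and the sums over distinct indices in~\eqref{eq:Dn}, \eqref{eq:Rn}, \eqref{eq:taustarn} are insensitive to ties. Nothing needs to be said about $\xi^*_n$, which is explicitly excluded in the discontinuous case.
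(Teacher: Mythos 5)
Your proposal is correct and matches the paper's approach, which is simply to invoke Theorem~1.1 of \citet{chatterjee2020new} for $\xi_n$ and the U-statistic strong law (Theorem~5.4.A in \citealp{MR595165}, applied via Proposition~1 in \citealp{MR3842884}) for $D_n$, $R_n$, $\tau^*_n$ — exactly the references stated in the proposition. The extra care you take — noting that non-degeneracy of $F_2$ under $\cF^{*}$ makes both $\xi$ and, eventually, the denominator of $\xi_n$ in \eqref{eq:chatt:ties} well defined, and that boundedness of the U-statistic kernels makes the moment hypothesis automatic over all of $\cF$ — is sound and fills in precisely the details one would want when citing these results in the discontinuous regime.
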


The following proposition is a generalization of Proposition~\ref{prop:d-consistent}.

\begin{proposition}[Consistency of correlation measures]\label{prop:tie} 
%The following are true:
%\begin{enumerate}[label=(\roman*)]
%\item Proposition~\ref{prop:d-consistent}(i) still holds for $\cF^{\mc}$ replaced by $\cF^{*}$ (Theorem~1.1 in \citealp{chatterjee2020new}); 
%\item Proposition~\ref{prop:d-consistent}(ii)--(iii) still hold for $\cF^{\mc}$ replaced by $\cF$ (Theorem~3.1 in \citealp{MR0029139}, Proposition~3 in \citealp{yanagimoto1970measures}, page 490 of \citealp{MR0125690}); 
%\item Proposition~\ref{prop:d-consistent}(iv) still holds for $\cF^{\mc}$ replaced by $\cF^{\tau^*}$ (Theorem~1 in \citet{MR3178526}, Theorem~6.1 in \citealp{MR4185806}).
%\end{enumerate} 
The following are true:
\begin{enumerate}[label=(\roman*)]
\item for $F\in\cF^{*}$, $\xi\ge 0$ with equality if and only if the pair is independent (Theorem~1.1 in \citealp{chatterjee2020new});
\item for $F\in\cF$, $D\ge 0$; for $F\in\cFD$, $D=0$ if and only if the pair 
is independent (Theorem~3.1 in \citealp{MR0029139}, Proposition~3 in \citealp{yanagimoto1970measures});  
\item for $F\in\cF$, $R\ge 0$ with equality if and only if the pair is independent (page 490 of \citealp{MR0125690});
\item for $F\in\cF^{\tau^*}$, $\tau^*\ge0$ where equality holds
if and only if the variables are independent (Theorem~1 in \citealp{MR3178526}, Theorem~6.1 in \citealp{MR4185806}).
\end{enumerate}
\end{proposition}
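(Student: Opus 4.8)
The plan is to obtain the nonnegativity statements directly and then reduce each ``if and only if'' to one elementary device: an identity between distribution functions that holds almost everywhere with respect to a product of the marginals can be upgraded to an identity holding \emph{everywhere}, using only right-continuity/monotonicity and the remark that any set of full $F_k$-measure contains every atom of $F_k$. With this in hand, each of the four items follows the references cited in the statement, which I would invoke while spelling out the upgrade step for completeness.

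Parts (ii) and (iv) are short. Nonnegativity of $D$ is immediate, and for $F\in\cFD$ the equivalence $D=0\iff$ independence is Theorem~3.1 of \cite{MR0029139} sharpened by Proposition~3 of \cite{yanagimoto1970measures}; I would also note that the ``only if'' direction fails for general $F$ (a $2\times2$ counterexample), which is why the statement is restricted to $\cFD$. For $\tau^*$ with $F$ continuous, Remark~\ref{remark:identity} gives $\tau^*=12D+24R\ge0$, and $\tau^*=0$ forces $R=0$, which by part (iii) yields independence (hence also $D=0$); the converse is trivial. For $F$ discrete or a mixture of a discrete and a jointly absolutely continuous component, the claim is Theorem~1 of \cite{MR3178526}, refined as in Theorem~6.1 of \cite{MR4185806}.

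The substantive item is part (iii). Nonnegativity of $R$ is again clear, and $R=0$ is equivalent to $h\equiv 0$ holding $(F_1\!\otimes\! F_2)$-almost everywhere, where $h(x^{(1)},x^{(2)})\equiv F(x^{(1)},x^{(2)})-F_1(x^{(1)})F_2(x^{(2)})$; once $h$ vanishes identically, $F=F_1F_2$ gives independence, the converse being trivial. First I would use Fubini to produce a set $G_1$ with $F_1(G_1)=1$ such that for each $x^{(1)}\in G_1$ one has $h(x^{(1)},x^{(2)})=0$ for $F_2$-a.e.\ $x^{(2)}$. Fixing such an $x^{(1)}$, the map $x^{(2)}\mapsto h(x^{(1)},x^{(2)})$ is a difference of two right-continuous nondecreasing functions with common limits at $\pm\infty$, and it vanishes identically because: atoms of $F_2$ lie in the full-measure set and so are zeros of $h(x^{(1)},\cdot)$; at a non-atom $x^{(2)}_0\in\mathrm{supp}(F_2)$ the left jump of $h(x^{(1)},\cdot)$ equals $\pr(X^{(1)}\le x^{(1)},X^{(2)}=x^{(2)}_0)-F_1(x^{(1)})\pr(X^{(2)}=x^{(2)}_0)=0$, so approximating $x^{(2)}_0$ by good points from a side that $F_2$ charges yields $h(x^{(1)},x^{(2)}_0)=0$; and off $\mathrm{supp}(F_2)$ the function $h(x^{(1)},\cdot)$ is locally constant and connects to a support point. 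Hence $h(x^{(1)},\cdot)\equiv 0$ for every $x^{(1)}\in G_1$, and repeating the identical three-case argument for the right-continuous function $x^{(1)}\mapsto h(x^{(1)},x^{(2)})$, which vanishes on the full-measure set $G_1$, gives $h\equiv 0$. This is the reasoning behind p.~490 of \cite{MR0125690} and Theorem~2 of \cite{MR3842884}.

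Part (i) is handled in the same spirit, following \citet[Theorem~1.1]{chatterjee2020new}, and this is where I expect the only genuine delicacy to lie. The measure $\xi$ is well defined, nonnegative, and finite on $\cF^{*}$ because its numerator integrates conditional variances while its denominator equals $\int\pr(X^{(2)}\ge x)\{1-\pr(X^{(2)}\ge x)\}\,\d F_2(x)$, which is strictly positive exactly when $F_2$ is non-degenerate and is at most $1$. Then $\xi=0$ means $\E\{\ind(X^{(2)}\ge x)\mid X^{(1)}\}=\pr(X^{(2)}\ge x)$ almost surely for $F_2$-a.e.\ $x$; upgrading ``$F_2$-a.e.'' to ``all $x$'' via right-continuity and monotonicity in $x$, again using that atoms of $F_2$ are covered, yields $\pr(X^{(2)}\ge x\mid X^{(1)})=\pr(X^{(2)}\ge x)$ a.s.\ for every $x$, i.e.\ the conditional law of $X^{(2)}$ given $X^{(1)}$ is a.s.\ its unconditional law, which is independence; the converse is immediate. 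The hard part throughout is precisely this conversion of a product-almost-everywhere identity into an everywhere identity in the presence of arbitrary atoms and gaps in the marginal supports; everything else is either immediate or a direct appeal to the cited literature.
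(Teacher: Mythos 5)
The paper gives no standalone proof of Proposition~\ref{prop:tie}: the citations embedded in the statement (Chatterjee's Theorem~1.1, Hoeffding's Theorem~3.1, Yanagimoto's Proposition~3, Blum--Kiefer--Rosenblatt p.~490, Bergsma--Dassios Theorem~1, Drton--Han--Shi Theorem~6.1) \emph{are} the proof. Your write-up invokes exactly the same references while helpfully sketching the a.e.-to-everywhere upgrade that the references carry out, so the route is essentially the same, just more explicit. Two small technical slips are worth flagging. In part~(i) the numerator of $\xi$ integrates variances of conditional expectations, not ``conditional variances''; and the maps $x\mapsto \pr(X^{(2)}\ge x\mid X^{(1)})$ and $x\mapsto \pr(X^{(2)}\ge x)$ are \emph{left}-continuous survival functions, not right-continuous, so the one-sided approximation step should be stated accordingly (the argument still goes through, and one also needs to pass from ``a.s.\ for each $x$'' to ``a.s.\ for all $x$'' via a countable dense set, which you implicitly use). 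With those tweaks the argument is sound and aligns with the paper's citation-based treatment.
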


The asymptotic distribution theory from
Section~\ref{sec:tests} can also be extended.  
%\begin{remark}[Limiting null distributions]
As the continuity requirement is dropped, the central limit theorems for
Chatterjee's $\xi_n$ still holds.  However, the asymptotic variance
now has a more complicated form and is not necessarily constant across
the null hypothesis of independence (Theorem~2.2 in
\citealp{chatterjee2020new}).  A similar phenomenon arises for the
limiting null distributions of $D_n$, $R_n$ and $\tau_n^*$ when one or
two marginals are not continuous; see Theorem~4.5 and Corollary~4.1 in
\citet{MR3541972} for further discussion.  As a result, permutation
analysis, which is unfortunately computationally much more intensive, is typically invoked to implement a test outside the realm of
continuous distributions.
%\end{remark}

{
\section{Simulation results}
\label{sec:sim}

In order to further examine the power of the tests, we simulate data as  a sample comprised of $n$ independent copies of
$(X^{(1)},X^{(2)})$, for which we consider a suite of different
specifications based on mixture, rotation, and generalized rotation alternatives.

\begin{example}\label{ex:sim-power1}
  For the distribution of $(X^{(1)},X^{(2)})$ we choose the six
  alternatives.  In their specification, $Y^{(1)}$ and $Y^{(2)}$ are
  always independent random variables and
  $\Delta\equiv n^{-1/2}\Delta_0$.
\begin{enumerate}[label=(\alph*)]
\item\label{ex:sim-power1a}  The pair $(X^{(1)},X^{(2)})$ is given by the
  rotation alternative \eqref{eq:rotamodel}, where $Y^{(1)},Y^{(2)}$
  are both standard Gaussian and $\Delta_0=2$.  This is an instance of
  our Example~\ref{ex:kon}.
\item\label{ex:sim-power1b}  The pair $(X^{(1)},X^{(2)})$ is given by the
  mixture alternative \eqref{eq:gof-model}, where
\begin{align*}
F_0\big(x^{(1)},x^{(2)}\big)&\equiv \Psi\big(x^{(1)}\big)\Psi\big(x^{(2)}\big),\\
G\big(x^{(1)},x^{(2)}\big)&\equiv \Psi\big(x^{(1)}\big)\Psi\big(x^{(2)}\big)\big[1+\big\{1-\Psi\big(x^{(1)}\big)\big\}\big\{1-\Psi\big(x^{(2)}\big)\big\}\big],
\end{align*}
$\Psi(\cdot)$ denotes the distribution function of the uniform
distribution on $[-1,1]$, and $\Delta_0=10$.  This is in accordance
with our Example~\ref{ex:far}.
\item\label{ex:sim-power1c}  The pair $(X^{(1)},X^{(2)})$ is given by the
  mixture alternative \eqref{eq:gof-model}, where the density functions of $F$ and $G$, denoted by $f_0$ and $g$, are given by
\begin{align*}
f_0\big(x^{(1)},x^{(2)}\big)&\equiv \psi\big(x^{(1)}\big)\psi\big(x^{(2)}\big),\\
g\big(x^{(1)},x^{(2)}\big)&\equiv \psi\big(x^{(1)}\big)\psi\big(x^{(2)}\big)\big[1+\big|1-2\Psi\big(x^{(1)}\big)\big|\big\{1-2\Psi\big(x^{(2)}\big)\big\}\big],
\end{align*}
$\psi(t)\equiv1/2\times\ind(-1\le t\le 1)$, and $\Delta_0=20$.  This
is an instance of our Example~\ref{ex:gof}.
\item\label{ex:sim-power1d} The pair $(X^{(1)},X^{(2)})$ is given by the
  generalized rotation alternative \eqref{eq:chattmodel}, where $Y^{(1)}$ is uniformly distributed on $[-1,1]$, $Y^{(2)}$ is standard Gaussian, $g$ takes values $-3$, $2$, $-4$, and $-3$ in the intervals $[-1, -0.5)$, $[-0.5, 0)$, $[0, 0.5)$, and $[0.5, 1]$, respectively, and $\Delta_0=3$.
\item\label{ex:sim-power1e} The pair $(X^{(1)},X^{(2)})$ is given by \eqref{eq:chattmodel}, where $Y^{(1)}$ is uniformly distributed on $[-1,1]$, $Y^{(2)}$ is standard Gaussian, $g(t)\equiv |t+0.5|\ind(t<0)+|t-0.5|\ind(t\ge0)$, and $\Delta_0=60$.
\item\label{ex:sim-power1f} The pair $(X^{(1)},X^{(2)})$ is given by \eqref{eq:chattmodel}, where $Y^{(1)}$ is uniformly distributed on $[-1,1]$, $Y^{(2)}$ is standard Gaussian, $g(t)\equiv \cos(2\pi t)$, and $\Delta_0=12$. 
\end{enumerate}
\end{example}

As indicated, the first three simulation settings are taken from
Examples~\ref{ex:kon}--\ref{ex:gof}.  The latter three are motivated by 
step function, W-shaped, and 
sinusoid settings in which Chatterjee's 
correlation coefficient performs well; see 
\citet[Section~4.3]{chatterjee2020new}.

Our focus is on comparing the empirical performance of the five tests
${T}^{\xi_n}_{\alpha}$, ${T}^{\xi_n^*}_{\alpha}$,
${T}^{D_n}_{\alpha}$, ${T}^{R_n}_{\alpha}$,
${T}^{\tau^*_n}_{\alpha}$.  The first four tests are conducted using the
asymptotics from Proposition \ref{prop:cha}.  The last test is
implemented {with bandwidths chosen as $h_1=h_2=n^{-3/10}$ following the suggestion in Section~6.1 of \citet{MR3024030} and using a finite-sample critical value}, which we approximate
via 1000 Monte Carlo simulations. The nominal significance level
is set to $0.05$, and the sample size is chosen as
$n\in\{500,1000,5000,10000\}$.  For each of the six settings and four sample
sizes, we conduct $1000$ simulations.

Before turning to statistical properties, we contrast the computation 
times for calculating the five considered rank correlation coefficients first.  
Table~\ref{tab:time} shows times in the considered rotation setting (a); 
the results for other settings are essentially the same. 
The calculations of $\xi_n$ and $\xi_n^*$ are by our own implementation, 
and those of $D_n$, $R_n$, $\tau_n^*$ are made using the functions 
\texttt{.calc.hoeffding()}, \texttt{.calc.refined()}, 
and \texttt{.calc.taustar()} from R package \texttt{independence} 
\citep{R:independence}, respectively.  All experiments are conducted on a laptop with a 2.6
GHz Intel Core i5 processor and a 8 GB memory.
One observes the clear computational advantages
of $\xi_n$, $D_n$, $R_n$, and $ \tau_n^*$ over \citet{MR3024030}'s
estimator $\xi_n^*$.  
% Chatterjee's new correlation coefficient and three well-known rank
% correlation coefficients, namely, Hoeffding's $D$,
% Blum--Kiefer--Rosenblatt's $R$, and Bergsma--Dassios--Yanagimoto's
% $\tau^*$, over \citet{MR3024030}'s estimator.
The difference in computation time between Chatterjee's coefficient $\xi_n$ and
Hoeffding's $D_n$ is insignificant.   Both $\xi_n$ and $D_n$ are slightly
faster to compute than Blum--Kiefer--Rosenblatt's $R_n$ and
Bergsma--Dassios--Yanagimoto's $\tau^*_n$; computation times differ by
a factor less than 2.5.

Table \ref{tab:power1} shows the empirical powers %(rejection frequencies) 
of the five tests. %, based on~the $1000$ simulations
%conducted in each case.  
The results confirm our earlier theoretical
claims on the powers of the  different tests in the different models, 
that Hoeffding's $D$, Blum--Kiefer--Rosenblatt's $R$, and Bergsma--Dassios--Yanagimoto's $\tau^*$ outperform Chatterjee's correlation coefficient in all the settings considered. %To implement the test based on Dette--Siburg--Stoimenov's $\xi_n^*$, we use an exact critical value %for rejection of $H_0$ 
%approximated by Monte Carlo simulation
%since this estimator is rank-based and distribution-free.
Interestingly, the simulation results suggest that the test based on $\xi_n^*$ may have
non-trivial power against certain alternatives; 
see results for Example~\ref{ex:sim-power1}\ref{ex:sim-power1e},\ref{ex:sim-power1f} 
in Table~\ref{tab:power1}.
}

{
\renewcommand{\tabcolsep}{1.5pt}
\renewcommand{\arraystretch}{1.0}
\begin{table}[!htb]
\centering
\caption{\nb{A comparison of computation time for all the five correlation statistics. 
The computation time here is the total time in seconds of 1000 replicates.}}\label{tab:time}{\nb{
\begin{tabular}{cC{.75in}C{.75in}C{.75in}C{.75in}C{.75in}}
\toprule
  $n$  &  $\xi_n$  & $\xi_n^*$  &   $D_n$   &   $R_n$   & $\tau_n^*$ \\
\midrule
     500  &   0.157   &    12.57   &   0.158   &   0.263   &   0.253        \\
    1000  &   0.239   &    33.75   &   0.267   &   0.505   &   0.468        \\
    5000  &   1.655   &    401.4   &   1.823   &   3.601   &   3.087        \\
   10000  &   3.089   &   1152.6   &   3.315   &   7.607   &   7.132        \\
\bottomrule
\end{tabular}}}
%Results are averaged over $5000$ simulated data sets.
\end{table} 
}

{
\renewcommand{\tabcolsep}{1.5pt}
\renewcommand{\arraystretch}{1.0}
\begin{table}[!htb]
\centering
\caption{\nb{Empirical powers of the five competing tests in Example \ref{ex:sim-power1}. The empirical powers here are based on 1000 replicates.}}{\nb{
\begin{tabular}{cC{.45in}C{.45in}C{.45in}C{.45in}C{.45in}
         C{.15in}C{.45in}C{.45in}C{.45in}C{.45in}C{.45in}}
\toprule
  $n$  &  $\xi_n$  & $\xi_n^*$ &   $D_n$   &   $R_n$   & $\tau_n^*$ 
     & &  $\xi_n$  & $\xi_n^*$ &   $D_n$   &   $R_n$   & $\tau_n^*$ \\
\midrule
       &  \multicolumn{5}{c}{Results for Example \ref{ex:sim-power1}\ref{ex:sim-power1a}}  
     & &  \multicolumn{5}{c}{Results for Example \ref{ex:sim-power1}\ref{ex:sim-power1d}}  \\
  500  &   0.103   &   0.178   &   0.954   &   0.955   &   0.957        & &   0.443   &   0.122   &   0.913   &   0.921   &   0.919        \\
 1000  &   0.067   &   0.106   &   0.956   &   0.956   &   0.956        & &   0.285   &   0.111   &   0.923   &   0.928   &   0.927        \\
 5000  &   0.043   &   0.078   &   0.953   &   0.952   &   0.952        & &   0.081   &   0.083   &   0.936   &   0.936   &   0.937        \\
10000  &   0.045   &   0.058   &   0.951   &   0.952   &   0.952        & &   0.081   &   0.052   &   0.955   &   0.954   &   0.955        \\
\vspace{-.5em}\\
       &  \multicolumn{5}{c}{Results for Example \ref{ex:sim-power1}\ref{ex:sim-power1b}}  
     & &  \multicolumn{5}{c}{Results for Example \ref{ex:sim-power1}\ref{ex:sim-power1e}}  \\
  500  &   0.087   &   0.138   &   0.898   &   0.896   &   0.897        & &   0.719   &   1.000   &   0.654   &   0.635   &   0.643        \\
 1000  &   0.067   &   0.089   &   0.900   &   0.900   &   0.899        & &   0.486   &   1.000   &   0.700   &   0.682   &   0.692        \\
 5000  &   0.059   &   0.082   &   0.891   &   0.890   &   0.891        & &   0.146   &   1.000   &   0.735   &   0.735   &   0.736        \\
10000  &   0.052   &   0.045   &   0.911   &   0.914   &   0.915        & &   0.105   &   0.997   &   0.754   &   0.752   &   0.752        \\
\vspace{-.5em}\\
       &  \multicolumn{5}{c}{Results for Example \ref{ex:sim-power1}\ref{ex:sim-power1c}}  
     & &  \multicolumn{5}{c}{Results for Example \ref{ex:sim-power1}\ref{ex:sim-power1f}}  \\
  500  &   0.088   &   0.559   &   0.412   &   0.404   &   0.410        & &   0.688   &   1.000   &   0.635   &   0.603   &   0.611        \\
 1000  &   0.066   &   0.408   &   0.390   &   0.391   &   0.396        & &   0.459   &   1.000   &   0.669   &   0.655   &   0.660        \\
 5000  &   0.060   &   0.327   &   0.363   &   0.364   &   0.364        & &   0.141   &   1.000   &   0.717   &   0.712   &   0.713        \\
10000  &   0.048   &   0.248   &   0.392   &   0.395   &   0.396        & &   0.100   &   0.994   &   0.726   &   0.730   &   0.728        \\
\bottomrule 
\end{tabular}}}
\label{tab:power1}
%Results are averaged over $5000$ simulated data sets.
\end{table}
}

%}

\section{Discussion} 
\label{sec:dis}

In this paper we considered independence tests based on the five rank
correlations from Definitions~\ref{eg:cha}--\ref{eg:bdy}.  As we
surveyed in Section~\ref{sec:prelim}, recent advances lead to little
difference in the efficiency of known algorithms to compute these
correlation coefficients.  For continuous distributions, i.e., data
without ties, all correlations except for {Dette--Siburg--Stoimenov's} $\xi_n^*$ can be
computed in nearly linear time.  Moreover, all but Hoeffding's $D$
give consistent tests of independence for arbitrary continuous
distributions; consistency of $D$ can be established for all
absolutely continuous distributions.

Our main new contribution is a local power analysis for continuous
distributions that revealed interesting differences in the power of
the tests.  This analysis features subtle differences but the
take-away message is that $\xi_n$ {is} suboptimal for
testing independence, whereas the more classical {$D_n$, $R_n$, and
$\tau^*_n$} are rate optimal in the considered setup.  This said, $\xi_n$
and $\xi_n^*$ have very appealing properties that do not pertain to
independence but rather detection of perfect functional dependence.  We refer
the reader to \citet{MR3024030} and \citet{chatterjee2020new} as well as \citet{cao2020correlations}.

We summarize the properties discussed in our paper in
Table~\ref{tab:propof}.  When referring to independence tests in this
table we assume continuous observations, i.e., $F\in\cF^{c}$.
Moreover, when discussing $\xi_n^*$, we assume additionally that the
kernel $K$ and bandwidths $h_1,h_2$ satisfy all assumptions stated in
Definition~\ref{def:dette}.  The table features two rows for
computation, where the first pertains to continuous observations free
of ties and the second pertains to arbitrary observations. The third 
row of the table concerns consistency of correlation measures; refer to 
\eqref{eq:bifamily1} and \eqref{eq:bifamily2} for the definitions of 
table entries. The fourth row concerns consistency of independence 
tests assuming $F\in\cF^{c}$. Finally, we summarize the rate-optimality 
and rate sub-optimality of five independence tests under {two} 
local alternatives (A) {and} (B) considered in Section~\ref{sec:subopt}.

%\begin{landscape}
{
\renewcommand{\tabcolsep}{4pt}
\renewcommand{\arraystretch}{1}
\begin{table}[!htb]
\begin{center}
\caption{Properties of the five rank correlation coefficients defined in Definitions~\ref{eg:cha}--\ref{eg:bdy}.}\label{tab:propof}
\begin{tabular}{cccC{.8in}C{.8in}C{.8in}C{.8in}C{.8in}}
\toprule
\multicolumn{3}{c}{$\mu_n$}                             
        & $\xi_n$ 
        & $\xi_n^*$ 
        & $D_n$ 
        & $R_n$ 
        & $\tau_n^*$ \\
\midrule
        & \multicolumn{1}{C{0.7in}}{Computa-}
        & $F\in\cF^{c}$
        & $O(n\log n)$
        & $O(n^{5/3})$%{\color{blue}$^{(a)}\!\!\!$}
        & $O(n\log n)$ 
        & $O(n\log n)$ 
        & $O(n\log n)$ \\
%\hline
  (i)   & \multicolumn{1}{C{0.7in}}{tional} \\
        & \multicolumn{1}{C{0.7in}}{efficiency}
        & $F\in\cF$
        & $O(n\log n)$
        & ------ 
        & $O(n\log n)$ 
        & $O(n^2)$ 
        & $O(n^2)$ \\[1em]
%\hline
 (ii)   & \multicolumn{2}{C{1.2in}}{Consistency of correlation measures}        
        & $F\in\cF^{*}${\color{blue}$^{(a)}$} 
        & $F\in\cF^{*}$  
        & $F\in\cFD$  
        & $F\in\cF$   
        & $F\in\cF^{\tau^*}$ \\[1.5em]
%\cline{2-8}
 (ii')  & \multicolumn{2}{C{1.2in}}{Consistency of independence tests}  
        & $F\in\cF^{c}$
        & $F\in\cF^{\rm DSS}$
        & $F\in\cFD$
        & $F\in\cF^{c}$
        & $F\in\cF^{c}$ \\[1em]
%\hline
\multirow{2}{*}{\raisebox{-5mm}{(iii)}}
        & \multicolumn{1}{C{0.7in}}{\raisebox{-3.5mm}{Statistical}}
        & (A)  
        & rate sub-optimal 
        & \nb{------} 
        & rate-optimal 
        & rate-optimal 
        & rate-optimal \\[1em]
%\cline{3-8}
        & \multicolumn{1}{C{0.7in}}{\raisebox{4.5mm}{efficiency}}     
        & (B)  
        & rate sub-optimal 
        & \nb{------} 
        & rate-optimal 
        & rate-optimal 
        & rate-optimal \\
%\cline{2-8}
%        & 
%        & (C)
%        & rate sub-optimal 
%        & \nb{------} 
%        & rate-optimal 
%        & rate-optimal 
%        & rate-optimal \\
\bottomrule
\end{tabular}
\begin{minipage}{6in}
\vspace{0.2cm}
{\small 
%{\color{blue}$^{(a)}$} $\delta$ is any arbitrarily small constant \\
{\color{blue}$^{(a)}$} Recall the definitions of bivariate distribution families in \eqref{eq:bifamily1} and \eqref{eq:bifamily2} \\}
\vspace{-1cm}
\end{minipage}
\end{center}
\end{table}
}
%\end{landscape}

\appendix

\section{Proofs}
\label{sec:proofs}

%\subsection{Proofs for Section \ref{sec:alter-kon}}
Throughout the proofs below, all the claims regarding conditional
expectations, conditional variances, and conditional covariances are in the almost sure sense.

\subsection{Proof of Proposition~\ref{prop:strong} ($\xi^*_n$)}\label{subsec:proof:prop:strong}

\begin{proof}[Proof of Proposition~\ref{prop:strong} ($\xi^*$)]

Equation (21) in \citet{MR3024030} states that
\[\hat B_{2n}-\tilde B_{2n}-C_{1n}-C_{2n}=o_{p}(n^{-1/2}),\]
but tracking a glitch in signs the equation should in fact be \[\hat B_{2n}-\tilde B_{2n}+C_{1n}+C_{2n}=o_{p}(n^{-1/2}).\]  
Accordingly, a revised version of Equations~(24)--(26) in \citet{MR3024030} shows that, 
\[
n^{1/2}(\xi_n^*-\xi)=\frac{12}{n^{1/2}}\sum_{i=1}^{n}(Z_i-\E Z_i)+o_{p}(1)
\yestag\label{eq:detteasym1}
\]
where $Z_i\equiv Z_{i,1}-Z_{i,2}-Z_{i,3}$ with 
\begin{align*}
Z_{i,1}&\equiv \int_{0}^{1}\ind\Big\{F_{Y^{(2)}}\Big(Y^{(2)}_i\Big)\le u^{(2)}\Big\}\tau\Big(F_{Y^{(1)}}\Big(Y^{(1)}_i\Big), u^{(2)}\Big)\d u^{(2)},\\
Z_{i,2}&\equiv \int_{0}^{1}\int_{0}^{1}\ind\Big\{F_{Y^{(1)}}\Big(Y^{(1)}_i\Big)\le u^{(1)}\Big\}\tau\big(u^{(1)}, u^{(2)}\big)\frac{\partial}{\partial u^{(1)}}\tau\big(u^{(1)}, u^{(2)}\big)\d u^{(1)}\d u^{(2)},\\
Z_{i,3}&\equiv \int_{0}^{1}\int_{0}^{1}\ind\Big\{F_{Y^{(2)}}\Big(Y^{(2)}_i\Big)\le u^{(2)}\Big\}\tau\big(u^{(1)}, u^{(2)}\big)\frac{\partial}{\partial u^{(2)}}\tau\big(u^{(1)}, u^{(2)}\big)\d u^{(1)}\d u^{(2)},
\end{align*}
$\tau(u^{(1)}, u^{(2)})={\partial}C(u^{(1)}, u^{(2)})/{\partial u^{(1)}}$, 
and $C(u^{(1)}, u^{(2)})$ is the copula of $(Y^{(1)}, Y^{(2)})$. 
Since the first term on the right hand side of \eqref{eq:detteasym1} has finite variance (see computation on pages 34--35 of \citet{MR3024030}), we deduce that
\[\xi^*_n\longrightarrow_p \xi.
\]
This completes the proof.
\end{proof}

\subsection{Proof of Proposition~\ref{prop:cha}(ii)}\label{subsec:proof:prop:cha}

\begin{proof}[Proof of Proposition~\ref{prop:cha}(ii)]

%Equation (21) in \citet{MR3024030} states that
%\[\hat B_{2n}-\tilde B_{2n}-C_{1n}-C_{2n}=o_{\rm P}(n^{-1/2}),\]
%which should be \[\hat B_{2n}-\tilde B_{2n}+C_{1n}+C_{2n}=o_{\rm P}(n^{-1/2}).\]  
%Accordingly, revised Equations~(24)--(26) in \citet{MR3024030} show that, 
%\[
%n^{1/2}\xi_n^*-\frac{12}{n^{1/2}}\sum_{i=1}^{n}(Z_i-\E Z_i)\to 0 ~~~\text{in probability},
%\yestag\label{eq:detteasym1}
%\]
%where $Z_i\equiv Z_{i,1}-Z_{i,2}-Z_{i,3}$ with 
%\begin{align*}
%Z_{i,1}&\equiv \int_{0}^{1}\ind\Big\{F_{Y^{(2)}}\Big(Y^{(2)}_i\Big)\le u^{(2)}\Big\}\tau\Big(F_{Y^{(1)}}\Big(Y^{(1)}_i\Big), u^{(2)}\Big)\d u^{(2)},\\
%Z_{i,2}&\equiv \int_{0}^{1}\int_{0}^{1}\ind\Big\{F_{Y^{(1)}}\Big(Y^{(1)}_i\Big)\le u^{(1)}\Big\}\tau\big(u^{(1)}, u^{(2)}\big)\frac{\partial}{\partial u^{(1)}}\tau\big(u^{(1)}, u^{(2)}\big)\d u^{(1)}\d u^{(2)},\\
%Z_{i,3}&\equiv \int_{0}^{1}\int_{0}^{1}\ind\Big\{F_{Y^{(2)}}\Big(Y^{(2)}_i\Big)\le u^{(2)}\Big\}\tau\big(u^{(1)}, u^{(2)}\big)\frac{\partial}{\partial u^{(2)}}\tau\big(u^{(1)}, u^{(2)}\big)\d u^{(1)}\d u^{(2)},
%\end{align*}
%$\tau(u^{(1)}, u^{(2)})={\partial}C(u^{(1)}, u^{(2)})/{\partial u^{(1)}}$, 
%and $C(u^{(1)}, u^{(2)})$ is the copula of $(Y^{(1)}, Y^{(2)})$. 
Applying \eqref{eq:detteasym1}, 
it holds under the null that
\[
C(u^{(1)}, u^{(2)})=u^{(1)}u^{(2)}, ~~~\tau(u^{(1)}, u^{(2)})=u^{(2)}. 
\]
 Accordingly,
\[
Z_{i,1}=Z_{i,3}=\int_{0}^{1}\ind\Big\{F_{Y^{(2)}}\Big(Y^{(2)}_i\Big)\le u^{(2)}\Big\}u^{(2)}\d u^{(2)}=\frac{1}{2}\Big[1-\Big\{F_{Y^{(2)}}\Big(Y^{(2)}_i\Big)\Big\}^2\Big]
~~~\text{and}~~~Z_{i,2}=0,
\]
which yields
\[
n^{1/2}\xi_n^*\to 0 ~~~\text{in probability}.
\yestag\label{eq:detteasym2}
\]
This completes the proof.
\end{proof}

\subsection{Proof of Remark~\ref{rmk:kon}}\label{subsec:proof:rmk:kon}

\begin{proof}[Proof of Remark~\ref{rmk:kon}]
Recall that $f_{\mX}(\mx;\Delta)$ denotes the density of $\mX$ with $\Delta$. Denote
\[
L(\mx;\Delta)\equiv \frac{f_{\mX}(\mx;\Delta)}{f_{\mX}(\mx;0)}~~~\text{and}~~~
L'(\mx;\Delta)\equiv \frac{\partial}{\partial\Delta}L(\mx;\Delta).  
\]
These definitions make sense by
Assumption~\ref{asp:kon}\ref{asp:kon1},\ref{asp:kon2}, and we may write  $\cI_{\mX}(0)=\E[\{L'(\mY;0)\}^2]$. Notice that $\mY$ is distributed as $\mX$ with $\Delta=0$. 
Since $\mY=\fA_{\Delta}^{-1}\mX$ is an invertible linear transformation, the density of $\mX$ can be expressed as  
\[
f_{\mX}(\mx;\Delta)=|\det(\fA_{\Delta})|^{-1}f_{\mY}(\fA_{\Delta}^{-1}\mx),
\]
where 
$
f_{\mY}(\my)=f_{\mY}(y^{(1)},y^{(2)})=f_{1}(y^{(1)})f_{2}(y^{(2)}). 
$ 
Direct computation yields
\begin{align*}
&L(\mx;\Delta)=|\det(\fA_{\Delta})|^{-1}f_{\mY}(\fA_{\Delta}^{-1}\mx)\Big/f_{\mY}(\mx),\\
\text{and}~~~
&L'(\mx;0)=-x^{(1)}\Big\{\rho_{2}\Big(x^{(2)}\Big)\Big\}
           -x^{(2)}\Big\{\rho_{1}\Big(x^{(1)}\Big)\Big\}.\yestag\label{eq:whynontrivial}
\end{align*}
Thus $\E\{(Y^{(k)})^2\} <\infty$ and $\E[\{\rho_{k}(Y^{(k)})\}^2] <\infty$ for $k=1,2$ will imply $\cI_{\mX}(0)=\E[\{L'(\mY;0)\}^2]<\infty$ under the Konijn alternatives. Also, $\E[\{\rho_{k}(Y^{(k)})\}^2] <\infty$ implies that $\E\{\rho_{k}(Y^{(k)})\}=0$ by Lemma~A.1 (Part A) in \citet{MR2128239}. %and Lemma~I.2.4.a in \citet{MR0229351}. 
\end{proof}

\subsection{Proof of Example~\ref{ex:kon}}

\begin{proof}[Proof of Example~\ref{ex:kon}]
%\mbox{}
%\begin{enumerate}[label=(\alph*)]
%\item
Assumption~\ref{asp:kon}\ref{asp:kon1} is satisfied since $f_{k}(z)>0$,~$k=1,2$ for all real $z$. 
Assumption~\ref{asp:kon}\ref{asp:kon3} holds in view of
\eqref{eq:whynontrivial}; notice that $L'(\mx;0)$ can never always be $0$. 
For Assumption~\ref{asp:kon}\ref{asp:kon2}, if $\rho_k(z)$ is constant, then $f_{k}(z)$ is either constant or proportional to $e^{Cz}$ with some constant $C$ for all real $z$, which is impossible. Then Assumption~\ref{asp:kon} is satisfied. 

Regarding the special case, 
without loss of generality, we can assume $Y_1$ and $Y_2$ to be standard normal or standard $t$-distributed. For the standard normal, we have $\rho_{k}(z)=-t$ and thus \eqref{eq:kon1} is satisfied. For the standard $t$-distribution with $\nu_k$ degrees of freedom, we have 
$\rho_{k}(z)=-{z(1+1/\nu_k)}/{(1+z^2/\nu_k)}$. It is easy to check \eqref{eq:kon1} is satisfied when $\nu_k>2$.
%\end{enumerate}
%The proof is thus completed.
\end{proof}

\subsection{Proof of Remark~\ref{rmk:far}}\label{subsec:proof:rmk:far}

\begin{proof}[Proof of Remark~\ref{rmk:far}]

Let $f_{\mX}(\mx;\Delta)$ denote the density of $\mX$ with $\Delta$. Denote
\[
L(\mx;\Delta)\equiv \frac{f_{\mX}(\mx;\Delta)}{f_{\mX}(\mx;0)}~~~\text{and}~~~
L'(\mx;\Delta)\equiv \frac{\partial}{\partial\Delta}L(\mx;\Delta), 
\]
then we can write $\cI_{\mX}(0)=\E[\{L'(\mY;0)\}^2]$, where $\mY$ is distributed as $\mX$ with $\Delta=0$. 
Direct computation yields 
\[
L(\mx;\Delta)=\frac{(1-\Delta)f_{0}(\mx)+\Delta g(\mx)}{f_0(\mx)},~~~
L'(\mx;0)=\frac{g(\mx)-f_0(\mx)}{f_0(\mx)},
\]
and thus
\begin{align*}
\cI_{\mX}(0)&=\E[\{L'(\mY;0)\}^2]=\E[\{g(Y)/f_0(Y)-1\}^2]\\
&=\E[\{s(Y)\}^2]=\chi^2(G,F_0)\equiv \int (\d G/\d F_0-1)^2 \d F_0.
\end{align*}
Since $s(x)=g(x)/f_0(x)-1$ is continuous and both $g$ and $f_0$ have compact support, $s(x)$ is bounded. Hence $\cI_{\mX}(0)<\infty$. 
\end{proof}

\subsection{Proof of Example~\ref{ex:far}}

\begin{proof}[Proof of Example~\ref{ex:far}]

To verify Assumption~\ref{asp:far} for the Farlie alternatives, 
we first prove that $G$ is a bonafide joint distribution function.  The corresponding density $g$ is given by
\[
g(x^{(1)},x^{(2)})=f_1(x^{(1)})f_2(x^{(2)})[1+\{1-2F_1(x^{(1)})\}\{1-2F_2(x^{(2)})\}],
\]
which is a bonafide joint density function \citep[Sec.~1.1.5]{MR2288045}. 
Then we have 
\[s(x)=g(x)/f_0(x)-1=\{1-2F_1(x^{(1)})\}\{1-2F_2(x^{(2)})\}\]
and find that
\begin{align*}
&\E[s(Y)|Y^{(1)}]=\{1-2F_1(Y^{(1)})\}\times\E\{1-2F_2(Y^{(2)})\}=0\\
\text{and}~~~
&\E[s(Y)|Y^{(2)}]=\E\{1-2F_1(Y^{(1)})\}\times\{1-2F_2(Y^{(2)})\}=0.
\end{align*}
The proof is completed. 
\end{proof}

%\subsection{Proof of Remark~\ref{rmk:gof}}
%
%\begin{proof}[Proof of Remark~\ref{rmk:gof}]
%If $\E[s(Y)|Y^{(2)}]=0$ almost surely, then we have
%\begin{align*}
%\E\Big[\Big\{F_{2}(Y^{(2)})^2-\frac13\Big\}s(Y)\Big]
%&=\E\bigg(\E\Big[\Big\{F_{2}(Y^{(2)})^2-\frac13\Big\}s(Y)\Big\rvert Y^{(2)}\Big]\bigg)\\
%&=\E\bigg(\Big\{F_{2}(Y^{(2)})^2-\frac13\Big\}\cdot\E\Big[s(Y)\Big\rvert Y^{(2)}\Big]\bigg)=0.
%\end{align*}
%The proof is completed. 
%\end{proof}

\subsection{Proof of Example~\ref{ex:gof}}

\begin{proof}[Proof of Example~\ref{ex:gof}]
%Let $f_{\mX}(\mx;\Delta)$ denote the density of $\mX$ with $\Delta$. Denote
%\[
%L(\mx;\Delta)\equiv \frac{f_{\mX}(\mx;\Delta)}{f_{\mX}(\mx;0)}~~~\text{and}~~~
%L'(\mx;\Delta)\equiv \frac{\partial}{\partial\Delta}L(\mx;\Delta), 
%\]
%then we can write $\cI_{\mX}(0)=\E[\{L'(\mY;0)\}^2]$, where $\mY$ is distributed as $\mX$ with $\Delta=0$. Direct computation yields 
%\[
%L(\mx;\Delta)\equiv \frac{(1-\Delta)f_{0}(\mx)+\Delta g(\mx)}{f_0(\mx)},~~~
%L'(\mx;0)=\frac{g(\mx)-f_0(\mx)}{f_0(\mx)},
%\]
%and thus
%\[
%\cI_{\mX}(0)=\E[\{L'(\mY;0)\}^2]=\E[\{g(Y)/f_0(Y)-1\}^2]=\E[\{s(Y)\}^2]
%=\chi^2(G,F_0)\equiv \int (\d G/\d F_0-1)^2 \d F_0.
%\]
We first verify that $g$ is a bonafide joint density function. First since both $h_1$ and $h_2$ are bounded by $1$, 
\[\lvert g(x)/f_0(x)-1\rvert=\lvert h_1(x^{(1)})h_2(x^{(1)})\rvert\le1,\]
and thus $g(x)\ge0$. Then we write
\[
g(x^{(1)},x^{(2)})=
f_1(x^{(1)})f_2(x^{(2)}) +
f_1(x^{(1)})h_1(x^{(1)})f_2(x^{(2)})h_2(x^{(2)})
\]
and 
\begin{align*}
\int_{-\infty}^{\infty}\int_{-\infty}^{\infty} g(x^{(1)},x^{(2)})\d x^{(1)}\d x^{(2)}
=\;&
 \int_{-\infty}^{\infty}f_1(x^{(1)})\d x^{(1)}\times
 \int_{-\infty}^{\infty}f_2(x^{(2)})\d x^{(2)}\\
&+
 \int_{-\infty}^{\infty}f_1(x^{(1)})h_1(x^{(1)})\d x^{(1)}\times
 \int_{-\infty}^{\infty}f_2(x^{(2)})h_2(x^{(2)})\d x^{(2)}=1,
\end{align*}
where
\[
\int_{-\infty}^{\infty}f_1(x^{(1)})h_1(x^{(1)})\d x^{(1)}<\infty
~~~\text{and}~~~
\int_{-\infty}^{\infty}f_2(x^{(2)})h_2(x^{(2)})\d x^{(2)}=0
\]
since $h_1(x^{(1)}),h_2(x^{(2)})$ are bounded by $1$ and $f_2(x^{(2)})h_2(x^{(2)})=-f_2(-x^{(2)})h_2(-x^{(2)})$. We also have
\begin{align*}
&\E[s(Y)|Y^{(1)}]=h_1(Y^{(1)})\times\E[h_2(Y^{(2)})]=h_1(Y^{(1)})\int_{-\infty}^{\infty}f_2(x^{(2)})h_2(x^{(2)})\d x^{(2)}=0,\\
\text{and}~~~
&\E[s(Y)|Y^{(2)}]=\E[h_1(Y^{(1)})]\times h_2(Y^{(2)})~~~\text{with}~\E[h_1(Y^{(1)})]=\int_{-\infty}^{\infty}f_1(x^{(1)})h_1(x^{(1)})\d x^{(1)}\ne 0.
\end{align*}
The proof is completed. 
\end{proof}

\subsection{Proof of Theorem~\ref{thm:power}(i)}\label{subsec:proof-power}

\begin{proof}[Proof of Theorem~\ref{thm:power}(i)]
%$ $\newline
{\bf (A)} {\it This proof uses all of Assumption~\ref{asp:kon}.}
%This is a direct application of a corollary to Le~Cam's third lemma 
%(\citealp[Sec.~VI.1.3]{MR0229351} and \citealp[Example~6.7]{MR1652247}).
Let $\mY_{i}=(Y^{(1)}_{i},Y^{(2)}_{i})$, $i=1,\dots,n$ be independent copies of $\mY$. %, where $\mY$ is distributed as $\mX$ with $\Delta=0$.  
%Let $F^{(0)}$ and $F^{(a)}$ be the (joint) distribution functions of $(\mY_{1},\dots,\mY_{n})$ and $(\mX_{1},\dots,\mX_{n})$, respectively. % and define $\Lambda_n\equiv \log({\d F^{(a)}}/{\d F^{(0)}})\equiv \log({\d F^{(a)}(\mY_{1},\dots,\mY_{n})}/{\d F^{(0)}(\mY_{1},\dots,\mY_{n})})$.  
Recall that $f_{\mX}(\mx;\Delta)$ is the density of $\mX$ with $\Delta$. 
Denote
\[
L(\mx;\Delta)\equiv \frac{f_{\mX}(\mx;\Delta)}{f_{\mX}(\mx;0)},~~~
L'(\mx;\Delta)\equiv \frac{\partial}{\partial\Delta}L(\mx;\Delta), 
\]
and define $\Lambda_n=\sum_{i=1}^{n}\log L(\mY_i;\Delta_n)$ and $T_n\equiv \Delta_n\sum_{i=1}^{n}L'(\mY_{i};0)$. 
These definitions make sense by Assumption~\ref{asp:kon}\ref{asp:kon1},\ref{asp:kon2}. 

To employ a corollary to Le Cam's third lemma, 
we wish to derive the joint limiting null distribution of $(-n^{1/2}\xi_n/3,\Lambda_n)$.
Under the null hypothesis, it holds that $Y^{(2)}_{[1]},\dots,Y^{(2)}_{[n]}$ are still independent and identically distributed, where $[i]$ is  such that $Y^{(1)}_{[1]}<\cdots<Y^{(1)}_{[n]}$.
%We hence assume hereafter, without loss of generality, that $Y^{(1)}_{1}<\cdots<Y^{(1)}_{n}$. 
In view of \citet[Equation~(9)]{MR1378827}, we have that under the null, 
\[
\Big(-n^{1/2}\xi_n\Big/3\Big)
%=n^{1/2}\sum_{i=1}^{n}\frac{\lvert F^{(n)}_{Y^{(2)}}(Y^{(2)}_{i+1})-F^{(n)}_{Y^{(2)}}(Y^{(2)}_{i})\rvert}{n-1/n}-\frac13 
-n^{-1/2}\sum_{i=1}^{n-1}\Xi_{[i]}\to 0 ~~~\text{in probability},
\yestag\label{eq:Angus}
\]
where %$F^{(n)}_{Y^{(2)}}$ is the empirical cumulative distribution function for $Y^{(2)}_1,\dots,Y^{(2)}_n$,
\begin{align*}
\Xi_{[i]}\equiv \Big\lvert F_{Y^{(2)}}\Big(Y^{(2)}_{[i+1]}\Big)-F_{Y^{(2)}}\Big(Y^{(2)}_{[i]}\Big)\Big\rvert
&+F_{Y^{(2)}}\Big(Y^{(2)}_{[i+1]}\Big)\Big\{1-F_{Y^{(2)}}\Big(Y^{(2)}_{[i+1]}\Big)\Big\}\\
&+F_{Y^{(2)}}\Big(Y^{(2)}_{[i]}\Big)\Big\{1-F_{Y^{(2)}}\Big(Y^{(2)}_{[i]}\Big)\Big\}-\frac23,
\yestag\label{eq:xi}
\end{align*}
and $F_{Y^{(2)}}$ is the cumulative distribution function for $Y^{(2)}$. One readily verifies $\lvert \Xi_{[i]}\rvert\le 1$. 

Using \eqref{eq:Angus}, the limiting null distribution of $(-n^{1/2}\xi_n/3,\Lambda_n)$ will be the same as that of \\$(n^{-1/2}\sum_{i=1}^{n-1}\Xi_{[i]},\Lambda_n)$. 
To find the limiting null distribution of $(n^{-1/2}\sum_{i=1}^{n-1}\Xi_{[i]},\Lambda_n)$, using the idea from \citet[p.~210--214]{MR0229351}, we first find the limiting null distribution of 
\begin{align*}
\Big(n^{-1/2}\sum_{i=1}^{n-1}\Xi_{[i]},T_n\Big)
&=\Big(n^{-1/2}\sum_{i=1}^{n-1}\Xi_{[i]},n^{-1/2}\Delta_0\sum_{i=1}^{n}L'(\mY_i;0)\Big)\\
&=\Big(n^{-1/2}\sum_{i=1}^{n-1}\Xi_{[i]},n^{-1/2}\Delta_0\sum_{i=1}^{n}L'(\mY_{[i]};0)\Big),
\end{align*}
where $\mY_{[i]}\equiv (\mY^{(1)}_{[i]},\mY^{(2)}_{[i]})$. 
%Next we derive the joint distribution of 
%\[n^{-1/2}\sum_{i=1}^{n-1}\Xi_{[i]}~~~\text{and}~~~T_n=n^{-1/2}\Delta_0\sum_{i=1}^{n}L'(\mY_{[i]};0).\]
To employ the Cram\'er--Wold device, we aim to show that under the null, for any real numbers $a$ and $b$, 
\[
an^{-1/2}\sum_{i=1}^{n-1}\Xi_{[i]}+ bn^{-1/2}\Delta_0\sum_{i=1}^{n}L'(\mY_{[i]};0)
\to N\Big(0,2a^2/45+b^2\Delta_0^2\cI_{\mX}(0)\Big) ~~~\text{in distribution}.
\yestag\label{eq:cramerwold1}
\]
The idea of the proof is to first show a conditional central limit result
\begin{align*}
an^{-1/2}\sum_{i=1}^{n-1}\Xi_{[i]}+ bn^{-1/2}\Delta_0\sum_{i=1}^{n}L'(\mY_{[i]};0)\Big\vert Y^{(1)}_{1},\dots,Y^{(1)}_{n}
\to N\Big(0,2a^2/45+b^2\Delta_0^2\cI_{\mX}(0)\Big)\\
~~~\text{in distribution, for almost every sequence $Y^{(1)}_{1},\dots,Y^{(1)}_{n},\dots$},
\yestag\label{eq:condCLT}
\end{align*}
and secondly deduce the desired unconditional central limit result. 

To prove \eqref{eq:condCLT}, we follow the idea put forward in the proof of Lemma~2.9.5 in \citet{MR1385671}. 
According to the central limit theorem for $1$-dependent random
variables (see, e.g., the Corollary in \citealp[p.~546]{MR97841}), 
the statement \eqref{eq:condCLT} is true if the following conditions hold: for almost every sequence $Y^{(1)}_{1},\dots,Y^{(1)}_{n},\dots$, 
\begin{align*}
&\E^{(2)}\Big( W_{[i]} \Big)= 0, 
\yestag\label{eq:Lindeberg0}\\
&\frac{1}{n}\E^{(2)}\Big\{\Big(\sum_{i=1}^{n}
  W_{[i]} \Big)^2\Big\}\to 2a^2/45+b^2\Delta_0^2\cI_{\mX}(0), 
\yestag\label{eq:Lindeberg1}\\
&\sum_{i=1}^{n}\E^{(2)}\Big( W_{[i]} ^2\Big)\Big/\E^{(2)}\Big\{\Big(\sum_{i=1}^{n}
  W_{[i]} \Big)^2\Big\}~~~\text{is bounded},
\yestag\label{eq:Lindeberg3}\\
\text{and}~~~
&\frac{1}{n}\sum_{i=1}^{n}
\E^{(2)} \Big\{ W_{[i]} ^2
\times \ind\Big( n^{-1/2}\Big\lvert W_{[i]} \Big\rvert > \epsilon \Big)\Big\}
\to 0
~~~\text{for every $\epsilon>0$},
\yestag\label{eq:Lindeberg2}
\end{align*}
where $\E^{(2)}$ denotes the expectation conditionally on $Y^{(1)}_{1},\dots,Y^{(1)}_{n}$, and
\[
W_{[i]}\equiv a\Xi_{[i]}
 +b\Delta_0L'\Big(Y_{[i]};0\Big)
~~~\text{for}~i=1,\dots,n-1,
~~~\text{and}~~~
W_{[n]}\equiv 
  b\Delta_0L'\Big(Y_{[n]};0\Big).
\yestag\label{eq:defW}
\]

We verify conditions \eqref{eq:Lindeberg0}--\eqref{eq:Lindeberg2} as follows, starting from \eqref{eq:Lindeberg0}.
Under the null hypothesis, conditionally on $Y^{(1)}_{1},\dots,Y^{(1)}_{n}$, we have that $Y^{(2)}_{[1]},\dots,Y^{(2)}_{[n]}$ are still independent and identically distributed as $Y^{(2)}$, which implies that $\E^{(2)}(\Xi_{[i]})=0$. 
We also deduce, by \eqref{eq:whynontrivial} and Assumption~\ref{asp:kon}\ref{asp:kon2}, that 
\[
\E \Big\{L'\Big(Y;0\Big)\Big\vert Y^{(1)}\Big\}=0,
\yestag\label{eq:assumpmixed}
\]
and thus $\E^{(2)}\{L'(Y_{[i]};0)\}=0$. 
Then \eqref{eq:Lindeberg0} follows by noticing that
\[
\E^{(2)}(\Xi_{[i]})=0%$\E[L'(Y;0)|Y^{(1)}]=0$.  
~~~\text{and}~~~
\E^{(2)}\{L'(Y_{[i]};0)\}=0.
\yestag\label{eq:start}
\] 

For \eqref{eq:Lindeberg1} and \eqref{eq:Lindeberg3}, we first claim that 
\[
\Cov^{(2)}\Big\{n^{-1/2}\sum_{i=1}^{n-1}\Xi_{[i]},n^{-1/2}\Delta_0\sum_{i=1}^{n}L'(\mY_{[i]};0)\Big)\Big\}=0,
\yestag\label{eq:cov0}
\] 
where $\cov^{(2)}$ denotes the covariance conditionally on $Y^{(1)}_{1},\dots,Y^{(1)}_{n}$. 
Recall that, under the null hypothesis, $Y^{(2)}_{[1]},\dots,Y^{(2)}_{[n]}$ are still independent and identically distributed as $Y^{(2)}$, conditionally on $Y^{(1)}_{1},\dots,Y^{(1)}_{n}$.
We obtain
\begin{align*}
&\Cov^{(2)}\Big\{\Big|F_{Y^{(2)}}\Big(Y^{(2)}_{[i+1]}\Big)-F_{Y^{(2)}}\Big(Y^{(2)}_{[i]}\Big)\Big|, L'\Big(Y_{[i+1]};0\Big)\Big\}\\
=\;&\Cov^{(2)}\Big[\frac12\Big\{F_{Y^{(2)}}\Big(Y^{(2)}_{[i+1]}\Big)\Big\}^2
+\frac12\Big\{1-F_{Y^{(2)}}\Big(Y^{(2)}_{[i+1]}\Big)\Big\}^2, L'\Big(Y_{[i+1]};0\Big)\Big]\yestag\label{eq:ta1}
\end{align*}
by taking expectation with respect to $Y^{(2)}_{[i]}$,
\begin{align*}
&\Cov^{(2)}\Big\{\Big|F_{Y^{(2)}}\Big(Y^{(2)}_{[i+1]}\Big)-F_{Y^{(2)}}\Big(Y^{(2)}_{[i]}\Big)\Big|, L'\Big(Y_{[i]};0\Big)\Big\}\\
=\;&\Cov^{(2)}\Big[\frac12\Big\{F_{Y^{(2)}}\Big(Y^{(2)}_{[i]}\Big)\Big\}^2
+\frac12\Big\{1-F_{Y^{(2)}}\Big(Y^{(2)}_{[i]}\Big)\Big\}^2, L'\Big(Y_{[i]};0\Big)\Big]\yestag\label{eq:ta2}
\end{align*}
by taking expectation with respect to $Y^{(2)}_{[i+1]}$, and
\begin{align*}
&\Cov^{(2)}\Big\{\Big|F_{Y^{(2)}}\Big(Y^{(2)}_{[i+1]}\Big)-F_{Y^{(2)}}\Big(Y^{(2)}_{[i]}\Big)\Big|, L'\Big(Y_{[j]};0\Big)\Big\}=0~~~\text{for all}~j\ne i,~i+1,\yestag\label{eq:ta3}
\end{align*}
since $Y^{(2)}_{[i]},Y^{(2)}_{[i+1]}$ are independent of $Y^{(2)}_{[j]}$ with $j\ne i,~i+1$, conditionally on $Y^{(1)}_{1},\dots,Y^{(1)}_{n}$. 
Taking into account \eqref{eq:ta1}--\eqref{eq:ta3}, it follows that
\begin{align*}
&\Cov^{(2)}\Big\{n^{-1/2}\sum_{i=1}^{n-1}\Xi_{[i]},n^{-1/2}\Delta_0\sum_{i=1}^{n}L'\Big(Y_{[i]}\Big)\Big\}\\
=\;&n^{-1}\Delta_0\Big(\sum_{i=2}^{n}\Cov^{(2)}\Big[\frac12\Big\{F_{Y^{(2)}}\Big(Y^{(2)}_{[i]}\Big)\Big\}^2
+\frac12\Big\{1-F_{Y^{(2)}}\Big(Y^{(2)}_{[i]}\Big)\Big\}^2, L'\Big(Y_{[i]};0\Big)\Big]\\
&\qquad\quad+\sum_{i=1}^{n-1}\Cov^{(2)}\Big[\frac12\Big\{F_{Y^{(2)}}\Big(Y^{(2)}_{[i]}\Big)\Big\}^2
+\frac12\Big\{1-F_{Y^{(2)}}\Big(Y^{(2)}_{[i]}\Big)\Big\}^2, L'\Big(Y_{[i]};0\Big)\Big]\\
&\qquad\quad+\sum_{i=2}^{n}\Cov^{(2)}\Big[F_{Y^{(2)}}\Big(Y^{(2)}_{[i]}\Big)\Big\{1-F_{Y^{(2)}}\Big(Y^{(2)}_{[i]}\Big)\Big\}, L'\Big(Y_{[i]};0\Big)\Big]\\%\displaybreak[0]
&\qquad\quad+\sum_{i=1}^{n-1}\Cov^{(2)}\Big[F_{Y^{(2)}}\Big(Y^{(2)}_{[i]}\Big)\Big\{1-F_{Y^{(2)}}\Big(Y^{(2)}_{[i]}\Big)\Big\}, L'\Big(Y_{[i]};0\Big)\Big]\Big)\\
=\;&n^{-1}\Big[\sum_{i=2}^{n}\Cov^{(2)}\Big\{\frac12, L'\Big(Y_{[i]};0\Big)\Big\}+\sum_{i=1}^{n-1}\Cov^{(2)}\Big\{\frac12, L'\Big(Y_{[i]};0\Big)\Big\}\Big]\\
=\;&n^{-1}\Big[-\Cov^{(2)}\Big\{\frac12, L'\Big(Y_{[1]};0\Big)\Big\}-\Cov^{(2)}\Big\{\frac12, L'\Big(Y_{[n]};0\Big)\Big\}\Big] = 0,
%=\;&0,
\yestag\label{eq:jiewei1}
\end{align*}
where 
%the second last step holds due to
%\[
% n^{-1}\sum_{i=1}^{n}\E^{(2)} \Big\{L'\Big(Y_{[i]};0\Big)\Big\}
%=n^{-1}\sum_{i=1}^{n}\E^{(2)} \Big\{L'\Big(Y_{i};0\Big)\Big\}
%=\E \Big\{L'\Big(Y;0\Big)\Big\vert Y^{(1)}\Big\}=0
%\yestag\label{eq:jiewei2}
%\]
%by \eqref{eq:assumpmixed}; 
we notice that
\[
   \E \Big\{n^{-1}\Big\lvert L'\Big(Y_{[j]};0\Big)\Big\rvert\Big\} 
\le\E \Big\{n^{-1}\sum_{i=1}^{n}\Big\lvert L'\Big(Y_{i};0\Big)\Big\rvert\Big\}
=  \E \Big\lvert L'\Big(Y;0\Big)\Big\rvert
<  \infty,
\yestag\label{eq:jiewei2}
\] 
for any given $j$.
%and the last step holds due to \eqref{eq:formofLprime}. 
%\[
%   \E^{(2)} \Big\{n^{-1}\Big\lvert L'\Big(Y_{[j]};0\Big)\Big\rvert\Big\} 
%\le\E^{(2)} \Big\{n^{-1}\sum_{i=1}^{n}\Big\lvert L'\Big(Y_{i};0\Big)\Big\rvert\Big\}
%=  \E^{(2)} \Big\lvert L'\Big(Y;0\Big)\Big\rvert
%<  \infty
%\] 
%for any given $j$.
%Furthermore, recall that
%\[T_n= n^{-1/2}\Delta_0
%\sum_{i=1}^{n}\Big[-Y^{(1)}_i\Big\{f'_{Y^{(2)}}\Big(Y^{(2)}_i\Big)\Big/f_{Y^{(2)}}\Big(Y^{(2)}_i\Big)\Big\}
%     -Y^{(2)}_i\Big\{f'_{Y^{(1)}}\Big(Y^{(1)}_i\Big)\Big/f_{Y^{(1)}}\Big(Y^{(1)}_i\Big)\Big\}\Big].
%\]
%We thus have, under the null, 
%\[\Cov\Big(n^{-1/2}\sum_{i=1}^{n-1}\Xi_i,T_n\Big)=0.\]
Then using \eqref{eq:start}--\eqref{eq:cov0} we can prove \eqref{eq:Lindeberg1} as follows:
\begin{align*}
\frac{1}{n}\E^{(2)}\Big\{\Big(\sum_{i=1}^{n}W_{[i]}&\Big)^2\Big\}
=\frac{1}{n}\E^{(2)}\Big[\Big\{a\sum_{i=1}^{n-1}\Xi_{[i]}+ b\Delta_0\sum_{i=1}^{n}L'\Big(\mY_{[i]};0\Big)\Big\}^2\Big]\\
=\;&\frac{1}{n}\E^{(2)}\Big[\Big(a\sum_{i=1}^{n-1}\Xi_{[i]}\Big)^2+ \Big\{b\Delta_0\sum_{i=1}^{n}L'\Big(\mY_{[i]};0\Big)\Big\}^2\Big]\\
=\;&\frac{1}{n}\E^{(2)}\Big[\Big(a\sum_{i=1}^{n-1}\Xi_{[i]}\Big)^2+ \Big\{b\Delta_0\sum_{i=1}^{n}L'\Big(\mY_{i};0\Big)\Big\}^2\Big]\\
=\;&\frac{2a^2(n-1)}{45n}+ \frac{1}{n}\sum_{i=1}^{n}\E^{(2)}\Big[\Big\{b\Delta_0L'\Big(\mY_{i};0\Big)\Big\}^2\Big]
\to2a^2/45+b^2\Delta_0^2\cI_{\mX}(0),
\yestag\label{eq:cauchy1}
\end{align*}
where the last step holds for almost all sequences $Y^{(1)}_{1},\dots,Y^{(1)}_{n},\dots$ by the law of large numbers. 

%To verify \eqref{eq:Lindeberg3}, using the Cauchy--Schwarz inequality, we obtain
%\begin{align*}
%\frac1n\sum_{i=1}^{n}\E^{(2)}\Big( W_{[i]} ^2\Big)
%&=\frac1n\bigg(\sum_{i=1}^{n-1}\E^{(2)}\Big[\Big\{a\Xi_{[i]}
% +b\Delta_0L'\Big(Y_{[i]};0\Big)\Big\}^2\Big]
%+\E^{(2)}\Big[\Big\{b\Delta_0L'\Big(Y_{[n]};0\Big)\Big\}^2\Big]\bigg)\\
%&\le\frac1n\bigg(2\sum_{i=1}^{n-1}\E^{(2)}\Big\{\Big(a\Xi_{[i]}\Big)^2\Big\}
%+2\sum_{i=1}^{n}\E^{(2)}\Big[\Big\{b\Delta_0L'\Big(Y_{[i]};0\Big)\Big\}^2\Big]\bigg)\\
%&=\frac1n\bigg(\frac{4a^2(n-1)}{45}
%+2\sum_{i=1}^{n}\E^{(2)}\Big[\Big\{b\Delta_0L'\Big(Y_{i};0\Big)\Big\}^2\Big]\bigg).
%\end{align*}
%Hence we have, recalling \eqref{eq:cauchy1},
%\[
%\sum_{i=1}^{n}\E^{(2)}\Big( W_{[i]} ^2\Big)\Big/\E^{(2)}\Big\{\Big(\sum_{i=1}^{n}
%  W_{[i]} \Big)^2\Big\}\le 2.
%\yestag\label{eq:end}
%\]

To verify \eqref{eq:Lindeberg3}, recalling \eqref{eq:xi} and using \eqref{eq:start},\eqref{eq:ta2}, we obtain
\[\E^{(2)}\Big\{\Xi_{[i]}\times\Delta_0L'\Big(Y_{[i]};0\Big)\Big\}
=\Cov^{(2)}\Big\{\Xi_{[i]},\Delta_0L'\Big(Y_{[i]};0\Big)\Big\}=0,\]
and moreover,
\begin{align*}
\frac1n\sum_{i=1}^{n}\E^{(2)}\Big( W_{[i]} ^2\Big)
&=\frac1n\bigg(\sum_{i=1}^{n-1}\E^{(2)}\Big[\Big\{a\Xi_{[i]}
 +b\Delta_0L'\Big(Y_{[i]};0\Big)\Big\}^2\Big]
+\E^{(2)}\Big[\Big\{b\Delta_0L'\Big(Y_{[n]};0\Big)\Big\}^2\Big]\bigg)\\
&=\frac1n\bigg(\sum_{i=1}^{n-1}\E^{(2)}\Big\{\Big(a\Xi_{[i]}\Big)^2\Big\}
+\sum_{i=1}^{n}\E^{(2)}\Big[\Big\{b\Delta_0L'\Big(Y_{[i]};0\Big)\Big\}^2\Big]\bigg)\\
&=\frac1n\bigg(\frac{2a^2(n-1)}{45}
+\sum_{i=1}^{n}\E^{(2)}\Big[\Big\{b\Delta_0L'\Big(Y_{i};0\Big)\Big\}^2\Big]\bigg).
\end{align*}
Hence we have, recalling \eqref{eq:cauchy1},
\[
\sum_{i=1}^{n}\E^{(2)}\Big( W_{[i]} ^2\Big)\Big/\E^{(2)}\Big\{\Big(\sum_{i=1}^{n}
  W_{[i]} \Big)^2\Big\}=1.
\yestag\label{eq:end}
\]

For proving \eqref{eq:Lindeberg2}, we recall that as given in \eqref{eq:whynontrivial}
\[
L'\Big(Y_{[i]};0\Big)=
-Y_{[i]}^{(1)}\Big\{\rho_{2}\Big(Y_{[i]}^{(2)}\Big)\Big\}
-Y_{[i]}^{(2)}\Big\{\rho_{1}\Big(Y_{[i]}^{(1)}\Big)\Big\},
\yestag\label{eq:formofLprime}
\]
where $\rho_{k}(z)\equiv f'_{k}(z)/f_{k}(z)$. 
The existence of finite second moments assumed in Assumption~\ref{asp:kon}\ref{asp:kon3}, $\E\{(Y^{(1)})^2\} <\infty$ and $\E[\{\rho_{1}(Y^{(1)})\}^2] <\infty$, implies that
\[
\max_{1\le i\le n}n^{-1/2}\Big\lvert Y^{(1)}_{i}\Big\rvert\to 0
~~~\text{and}~~~ 
\max_{1\le i\le n}n^{-1/2}\Big\lvert \rho_{1}\Big(Y^{(1)}_{i}\Big) \Big\rvert\to 0
\yestag\label{eq:docov}
\]
for almost all sequences $Y^{(1)}_{1},\dots,Y^{(1)}_{n},\dots$ \citep[Theorem~5.2]{MR150889}. Since $\lvert \Xi_{[i]}\rvert\le 1$, we have
\begin{align*}
\ind\Big( n^{-1/2}\big\lvert W_{[i]} \big\rvert > \epsilon \Big)
\le \ind\Big( \lvert a\rvert n^{-1/2} > \epsilon/3 \Big)
&+  \ind\Big\{ \lvert b\rvert\times\Big(\max_{1\le i\le n}n^{-1/2}\Big\lvert Y^{(1)}_{i}\Big\rvert\Big)
                 \times\Big\lvert \rho_{2}\Big(Y_{[i]}^{(2)}\Big)\Big\rvert > \epsilon/3 \Big\}\\
&+  \ind\Big\{ \lvert b\rvert\times\Big(\max_{1\le i\le n}n^{-1/2}\Big\lvert \rho_{1}\Big(Y^{(1)}_{i}\Big) \Big\rvert\Big)
                 \times\Big\lvert Y_{[i]}^{(2)}\Big\rvert > \epsilon/3 \Big\}.
\end{align*}
Then for every $\epsilon>0$, 
\begin{align*}
&\frac{1}{n}\sum_{i=1}^{n}
\E^{(2)} \Big\{ W_{[i]} ^2
\times \ind\Big( n^{-1/2}\Big\lvert W_{[i]} \Big\rvert > \epsilon \Big)\Big\}\\
\le\;
&\frac{1}{n}\sum_{i=1}^{n}
\E^{(2)} \bigg( 3\Big[a^2 +
\Big\{Y_{[i]}^{(1)}\rho_{2}\Big(Y_{[i]}^{(2)}\Big)\Big\}^2 +
\Big\{Y_{[i]}^{(2)}\rho_{1}\Big(Y_{[i]}^{(1)}\Big)\Big\}^2\Big]\\
&\qquad\,\times\Big[
   \ind\Big( \lvert a\rvert n^{-1/2} > \epsilon/3 \Big)+  \ind\Big\{ \lvert b\rvert\times\Big(\max_{1\le i\le n}n^{-1/2}\Big\lvert Y^{(1)}_{i}\Big\rvert\Big)
                 \times\Big\lvert \rho_{2}\Big(Y_{[i]}^{(2)}\Big)\Big\rvert > \epsilon/3 \Big\}\\
&\qquad\qquad\qquad\qquad\qquad\qquad+  \ind\Big\{ \lvert b\rvert\times\Big(\max_{1\le i\le n}n^{-1/2}\Big\lvert \rho_{1}\Big(Y^{(1)}_{i}\Big) \Big\rvert\Big)
                 \times\Big\lvert Y_{[i]}^{(2)}\Big\rvert > \epsilon/3 \Big\}\Big]\bigg). \yestag\label{eq:zuihou}
\end{align*}
Here in \eqref{eq:zuihou} we have by \eqref{eq:docov} and dominated convergence theorem that
\begin{align*}
&\frac{1}{n}\sum_{i=1}^{n}\E^{(2)} 
\Big[\ind\Big\{ \lvert b\rvert\times\Big(\max_{1\le i\le n}n^{-1/2}\Big\lvert Y^{(1)}_{i}\Big\rvert\Big)
                 \times\Big\lvert \rho_{2}\Big(Y_{[i]}^{(2)}\Big)\Big\rvert > \epsilon/3 \Big\}\Big]\\
=\;&\E^{(2)} 
\Big[\ind\Big\{ \Big\lvert b\Big\rvert\times\Big(\max_{1\le i\le n}n^{-1/2}\Big\lvert Y^{(1)}_{i}\Big\rvert\Big)
                 \times\Big\lvert \rho_{2}\Big(Y_{1}^{(2)}\Big)\Big\rvert > \epsilon/3 \Big\}\Big]\to 0,
\end{align*}
where
\[
\ind\Big\{ \lvert b\rvert\times\Big(\max_{1\le i\le n}n^{-1/2}\Big\lvert Y^{(1)}_{i}\Big\rvert\Big)
                 \times\Big\lvert \rho_{2}\Big(Y_{1}^{(2)}\Big)\Big\rvert > \epsilon/3 \Big\}\Big]\to 0 ~~~\text{in probability},
\]
for almost all sequences $Y^{(1)}_{1},\dots,Y^{(1)}_{n},\dots$. We also have
\begin{align*}       
&\frac{1}{n}\sum_{i=1}^{n}\E^{(2)} 
\Big[\Big\{Y_{[i]}^{(1)}\rho_{2}\Big(Y_{[i]}^{(2)}\Big)\Big\}^2 
\times\ind\Big\{ \lvert b\rvert\times\Big(\max_{1\le i\le n}n^{-1/2}\Big\lvert Y^{(1)}_{i}\Big\rvert\Big)
                 \times\Big\lvert \rho_{2}\Big(Y_{[i]}^{(2)}\Big)\Big\rvert > \epsilon/3 \Big\}\Big]\\
=\;&\frac{1}{n}\sum_{i=1}^{n}\Big(Y_{[i]}^{(1)}\Big)^2\E^{(2)} 
\Big[\Big\{\rho_{2}\Big(Y_{[i]}^{(2)}\Big)\Big\}^2 
\times\ind\Big\{ \lvert b\rvert\times\Big(\max_{1\le i\le n}n^{-1/2}\Big\lvert Y^{(1)}_{i}\Big\rvert\Big)
                 \times\Big\lvert \rho_{2}\Big(Y_{[i]}^{(2)}\Big)\Big\rvert > \epsilon/3 \Big\}\Big]\\
=\;&\bigg(\frac{1}{n}\sum_{i=1}^{n}\Big(Y_{[i]}^{(1)}\Big)^2\bigg)\bigg(\E^{(2)} 
\Big[\Big\{\rho_{2}\Big(Y_{1}^{(2)}\Big)\Big\}^2 
\times\ind\Big\{ \lvert b\rvert\times\Big(\max_{1\le i\le n}n^{-1/2}\Big\lvert Y^{(1)}_{i}\Big\rvert\Big)
                 \times\Big\lvert \rho_{2}\Big(Y_{1}^{(2)}\Big)\Big\rvert > \epsilon/3 \Big\}\Big]\bigg)\\
=\;&\bigg(\frac{1}{n}\sum_{i=1}^{n}\Big(Y_{i}^{(1)}\Big)^2\bigg)\bigg(\E^{(2)} 
\Big[\Big\{\rho_{2}\Big(Y_{1}^{(2)}\Big)\Big\}^2 
\times\ind\Big\{ \lvert b\rvert\times\Big(\max_{1\le i\le n}n^{-1/2}\Big\lvert Y^{(1)}_{i}\Big\rvert\Big)
                 \times\Big\lvert \rho_{2}\Big(Y_{1}^{(2)}\Big)\Big\rvert > \epsilon/3 \Big\}\Big]\bigg)\\
\to\,&0,
\end{align*}
where for almost all sequences $Y^{(1)}_{1},\dots,Y^{(1)}_{n},\dots$,
\[
\frac{1}{n}\sum_{i=1}^{n}\Big(Y_{i}^{(1)}\Big)^2\to \E\Big\{\Big(Y^{(1)}\Big)^2\Big\}
\]
by the law of large numbers, and
\[
\E^{(2)} 
\Big[\Big\{\rho_{2}\Big(Y_{1}^{(2)}\Big)\Big\}^2 
\times\ind\Big\{ |b|\times\Big(\max_{1\le i\le n}n^{-1/2}\Big\lvert Y^{(1)}_{i}\Big\rvert\Big)
                 \times\Big\lvert \rho_{2}\Big(Y_{1}^{(2)}\Big)\Big\rvert > \epsilon/3 \Big\}\Big]\to 0
\]
by \eqref{eq:docov} and the dominated convergence theorem.  
We can deduce similar convergences for all the other summands in \eqref{eq:zuihou}. 
Hence for almost all sequences $Y^{(1)}_{1},\dots,Y^{(1)}_{n},\dots$, all conditions \eqref{eq:Lindeberg0}--\eqref{eq:Lindeberg2} are satisfied. This completes the proof of \eqref{eq:condCLT}. Moreover, the desired result \eqref{eq:cramerwold1} follows.

Finally, the Cram\'er--Wold device yields that under the null,
\[
\Big(n^{-1/2}\sum_{i=1}^{n-1}\Xi_{[i]},T_n\Big)
\to
N_{2}\bigg(\bigg(\begin{matrix}0\\0\end{matrix}\bigg),
\bigg(\begin{matrix}2/45 & 0\\
0 & \Delta_0^2\cI_{\mX}(0)\end{matrix}\bigg)\bigg) ~~~\text{in distribution}.
\yestag\label{eq:cramerwold2}
\]
Furthermore, using ideas from \citet[p.~210--214]{MR0229351}
(see also \citealp[Appx.~B]{MR2691505}), we have under the null, 
\[\Lambda_n-T_n+\Delta_0^2\cI_{\mX}(0)/2\to 0 ~~~\text{in probability},\]
and thus under the null, 
\[
\Big(n^{-1/2}\sum_{i=1}^{n-1}\Xi_{[i]},\Lambda_n\Big)
\to
N_{2}\bigg(\bigg(\begin{matrix}0\\-\Delta_0^2\cI_{\mX}(0)/2\end{matrix}\bigg),
\bigg(\begin{matrix}2/45 & 0\\
0 & \Delta_0^2\cI_{\mX}(0)\end{matrix}\bigg)\bigg) ~~~\text{in distribution},
\yestag\label{eq:cramerwold3}
\]
and $(-n^{1/2}\xi_n/3,\Lambda_n)$ has the same limiting null distribution by \eqref{eq:Angus}.  
Finally, we employ a corollary to Le~Cam's third lemma (\citealp[Example~6.7]{MR1652247}) to obtain that, under the considered local alternative $H_{1,n}(\Delta_0)$ with any fixed $\Delta_0>0$, $-n^{1/2}\xi_n/3\to N(0,2/45)$ in distribution, and thus
\[n^{1/2}\xi_n
\to
N(0,2/5) ~~~\text{in distribution}.
\yestag\label{eq:final}
\]
This completes the proof for family (A).

{\bf (B)} {\it This proof proceeds with only Assumption~\ref{asp:far}\ref{asp:far1},\ref{asp:far2},\ref{asp:far5}.}
Let $\mY_{i}=(Y^{(1)}_{i},Y^{(2)}_{i})$, $i=1,\dots,n$ be independent copies of $\mY$ (distributed as $\mX$ with $\Delta=0$).
%Let $F^{(0)}$ and $F^{(a)}$ be the (joint) distribution functions of $(\mY_{1},\dots,\mY_{n})$ and $(\mX_{1},\dots,\mX_{n})$, respectively. %, and define $\Lambda_n\equiv \log({\d F^{(a)}}/{\d F^{(0)}})$.
%Let $f_0$ and $g$ denote density functions of $F_0$ and $G$, respectively. 
Denote
\[
L(\mx;\Delta)\equiv \frac{f_{\mX}(\mx;\Delta)}{f_{\mX}(\mx;0)},~~~
L'(\mx;\Delta)\equiv \frac{\partial}{\partial\Delta}L(\mx;\Delta), 
\]
and define $\Lambda_n=\sum_{i=1}^{n}\log L(\mY_i;\Delta_n)$ and $T_n\equiv \Delta_n\sum_{i=1}^{n}L'(\mY_{i};0)$. Direct computation yields 
\[
L(\mx;\Delta)\equiv \frac{(1-\Delta)f_{0}(\mx)+\Delta g(\mx)}{f_0(\mx)},~~~
L'(\mx;0)=\frac{g(\mx)-f_0(\mx)}{f_0(\mx)},
\]
and thus
\begin{align*}
\cI_{\mX}(0)&=\E[\{L'(\mY;0)\}^2]=\E[\{g(Y)/f_0(Y)-1\}^2]\\
&=\E[\{s(Y)\}^2]=\chi^2(G,F_0)\equiv \int (\d G/\d F_0-1)^2 \d F_0.
\end{align*}
%Notice also 
%since $g(Y)/f_0(Y)=\{g_1(Y^{(1)})/f_1(Y^{(1)})\}\cdot\{g_2(Y^{(2)})/f_2(Y^{(2)})\}$, $\cI_{\mX}(0)<\infty$ implies that $\E[\{g_k(Y^{(k)})/f_k(Y^{(k)})\}^2]<\infty$ for $k=1,2$. 
%It holds by central limit theorem that $T_n$ is asymptotically normal with mean $0$ and variance $\Delta_0^2\cI_{\mX}(0)=\Delta_0^2\chi^2(G,F_0)$.

Similar to the proof for family (A), we proceed to determine the limiting
null distribution of $(-n^{1/2}\xi_n/3,\Lambda_n)$.  To this end, 
in view of the proof of Theorem 2 in \citet{MR3466185}, 
we first find the limiting null distribution of 
$(n^{-1/2}\sum_{i=1}^{n-1}\Xi_{[i]},T_n)$. 
The idea of deriving it is still to first show \eqref{eq:condCLT}, 
then \eqref{eq:cramerwold1}, and thus \eqref{eq:cramerwold2}.

Next we verify conditions \eqref{eq:Lindeberg0}--\eqref{eq:Lindeberg2}
for family (B). Notice that when we verify conditions
\eqref{eq:Lindeberg0}--\eqref{eq:Lindeberg3} for family (A) (from
\eqref{eq:start} to \eqref{eq:end}), we only use that 
\begin{enumerate}[label=(\arabic*)]
\item %the property that 
under the null hypothesis, $Y^{(2)}_{[1]},\dots,Y^{(2)}_{[n]}$ are still independent and identically distributed as $Y^{(2)}$, conditionally on $Y^{(1)}_{1},\dots,Y^{(1)}_{n}$, 
\item $\E\{L'(Y;0)\vert Y^{(1)}\}=0$, and
\item $0<\cI_{\mX}(0)<\infty$. 
\end{enumerate}
The first property always holds under the null hypothesis. 
The latter two are assumed or implied in Assumption~\ref{asp:far}\ref{asp:far2} and Assumption~\ref{asp:far}\ref{asp:far1},\ref{asp:far5}, respectively. 
Hence we can verify conditions \eqref{eq:Lindeberg0}--\eqref{eq:Lindeberg3} for family (B) using the same arguments. 
The only difference lies in proving \eqref{eq:Lindeberg2}. Since $s(x)=g(x)/f_0(x)-1$ is continuous and has compact support, it is bounded by some constant, say $C_s>0$. We have by definition of $W_{[i]}$ in \eqref{eq:defW}, 
\[\big\lvert W_{[i]} \big\rvert \le \lvert a\rvert + \lvert b\rvert \Delta_0 C_s,\]
and thus
\[\ind\Big( n^{-1/2}\Big\lvert W_{[i]} \Big\rvert > \epsilon \Big)=0
~~~\text{for all}~n>\Big(\frac{|a|+|b|\Delta_0 C_s}{\epsilon}\Big)^2.\]
Then \eqref{eq:Lindeberg2} follows by the dominated convergence theorem.

We have proven \eqref{eq:cramerwold2} for family (B). Furthermore, 
in the proof of Theorem 2 in \citet{MR3466185}, they showed that under the null,
\[
\Lambda_n-T_n+\Delta_0^2\cI_{\mX}(0)/2\to 0 ~~~\text{in probability}.
\yestag\label{eq:ddbasy}
\]
Thus under the null, we have \eqref{eq:cramerwold3} as well. The rest of the proof is 
to employ a corollary to Le~Cam's third lemma (\citealp[Example~6.7]{MR1652247}) 
to obtain \eqref{eq:final}. 
\end{proof}

\subsection{Proof of Theorem~\ref{thm:power}(ii)}

\begin{proof}[Proof of Theorem~\ref{thm:power}(ii)]
{\bf (A)} {\it This proof uses all of Assumption~\ref{asp:kon}.}
%This is an application of a corollary to Le~Cam's third lemma 
%(\citealp[Theorem~6.6]{MR1652247}). 
Let $\mY_{i}=(Y^{(1)}_{i},Y^{(2)}_{i})$ and $\mX_{i}=(X^{(1)}_{i},X^{(2)}_{i})$, $i=1,\dots,n$ be independent copies of $\mY$ and $\mX$, respectively.
Here $\mX$ depends on $n$ with $\Delta=\Delta_n=n^{-1/2}\Delta_0$. 
Let $F^{(0)}$ and $F^{(a)}$ be the (joint) distribution functions of $(\mY_{1},\dots,\mY_{n})$ and $(\mX_{1},\dots,\mX_{n})$, respectively. 
Denote
\[
L(\mx;\Delta)\equiv \frac{f_{\mX}(\mx;\Delta)}{f_{\mX}(\mx;0)},~~~
L'(\mx;\Delta)\equiv \frac{\partial}{\partial\Delta}L(\mx;\Delta), 
\]
and define $\Lambda_n=\sum_{i=1}^{n}\log L(\mY_i;\Delta_n)$ and $T_n\equiv \Delta_n\sum_{i=1}^{n}L'(\mY_{i};0)$. 
These definitions make sense by Assumption~\ref{asp:kon}\ref{asp:kon1},\ref{asp:kon2}. 

In this proof we will consider the Hoeffding decomposition of $\mu_n$ under the null:
\begin{equation}\label{eqn:Hdec-W}
 \mu_n
=\sum_{\ell=1}^{m^{\mu}}\underbrace{\binom{n}{\ell}^{-1}
  \sum_{1\le i_1<\cdots< i_{\ell}\le n}\binom{m^{\mu}}{\ell}\,\widetilde h^{\mu}_{\ell}\Big\{
  \Big(Y^{(1)}_{i_1},Y^{(2)}_{i_1}\Big),\dots,
  \Big(Y^{(1)}_{i_\ell},Y^{(2)}_{i_\ell}\Big)
  \Big\}}_{\displaystyle H^{\mu}_{n,\ell}}, 
\end{equation}
where
\begin{align*}
&\widetilde h^{\mu}_{\ell}(\my_1,\ldots,\my_{\ell}) 
\equiv h^{\mu}_{\ell}(\my_1,\ldots,\my_{\ell})-\E h^{\mu}-\sum_{k=1}^{\ell-1}\sum_{1\leq i_1<\cdots<i_k\leq\ell}\widetilde h^{\mu}_{k}(\my_{i_1},\ldots,\my_{i_k}),\\
&h^{\mu}_{\ell}(\my_1\ldots,\my_{\ell})\equiv \E h^{\mu}(\my_1\ldots,\my_{\ell},\mY_{\ell+1},\ldots,\mY_{m^{\mu}}),
~~~\E h^{\mu}\equiv \E h^{\mu}(\mY_1,\ldots,\mY_{m^{\mu}}), 
\end{align*}
and $\mY_1,\ldots,\mY_{m^{\mu}}$ are $m^{\mu}$ independent copies of $\mY$.  
Here $h^{\mu}$ is the ``symmetrized'' kernel 
and $m^{\mu}$ is the order of the kernel function $h^{\mu}$
for $\mu\in\{D,R,\tau^*\}$ related to \eqref{eq:Dn}, \eqref{eq:Rn}, or \eqref{eq:taustarn}:
\begin{align*}
h^{D}(\my_1,\dots,\my_5)
\equiv\;&\frac{1}{5!}\sum_{1\le i_1\ne \cdots\ne i_5\le 5}\frac14\\
&~~~\Big[\Big\{\ind\Big(y^{(1)}_{i_1}\leq y^{(1)}_{i_5}\Big)-\ind\Big(y^{(1)}_{i_2}\leq y^{(1)}_{i_5}\Big)\Big\}
         \Big\{\ind\Big(y^{(1)}_{i_3}\leq y^{(1)}_{i_5}\Big)-\ind\Big(y^{(1)}_{i_4}\leq y^{(1)}_{i_5}\Big)\Big\}\Big]\\%\displaybreak[0]
&~~~\Big[\Big\{\ind\Big(y^{(2)}_{i_1}\leq y^{(2)}_{i_5}\Big)-\ind\Big(y^{(2)}_{i_2}\leq y^{(2)}_{i_5}\Big)\Big\}
         \Big\{\ind\Big(y^{(2)}_{i_3}\leq y^{(2)}_{i_5}\Big)-\ind\Big(y^{(2)}_{i_4}\leq y^{(2)}_{i_5}\Big)\Big\}\Big],\\
h^{R}(\my_1,\dots,\my_6)
\equiv\;&\frac{1}{6!}\sum_{1\le i_1\ne \cdots\ne i_6\le 6}\frac14\\
&~~~\Big[\Big\{\ind\Big(y^{(1)}_{i_1}\leq y^{(1)}_{i_5}\Big)-\ind\Big(y^{(1)}_{i_2}\leq y^{(1)}_{i_5}\Big)\Big\}
         \Big\{\ind\Big(y^{(1)}_{i_3}\leq y^{(1)}_{i_5}\Big)-\ind\Big(y^{(1)}_{i_4}\leq y^{(1)}_{i_5}\Big)\Big\}\Big]\\
&~~~\Big[\Big\{\ind\Big(y^{(2)}_{i_1}\leq y^{(2)}_{i_6}\Big)-\ind\Big(y^{(2)}_{i_2}\leq y^{(2)}_{i_6}\Big)\Big\}
         \Big\{\ind\Big(y^{(2)}_{i_3}\leq y^{(2)}_{i_6}\Big)-\ind\Big(y^{(2)}_{i_4}\leq y^{(2)}_{i_6}\Big)\Big\}\Big],\\
h^{\tau^*}(\my_1,\dots,\my_4)
\equiv\;&\frac{1}{4!}\sum_{1\le i_1\ne \cdots\ne i_4\le 4}
    \Big\{\ind\Big(y^{(1)}_{i_1},y^{(1)}_{i_3}< y^{(1)}_{i_2},y^{(1)}_{i_4}\Big)
         +\ind\Big(y^{(1)}_{i_2},y^{(1)}_{i_4}< y^{(1)}_{i_1},y^{(1)}_{i_3}\Big)\\
&\mkern125mu
         -\ind\Big(y^{(1)}_{i_1},y^{(1)}_{i_4}< y^{(1)}_{i_2},y^{(1)}_{i_3}\Big)
         -\ind\Big(y^{(1)}_{i_2},y^{(1)}_{i_3}< y^{(1)}_{i_1},y^{(1)}_{i_4}\Big)\Big\}\\
&\mkern120mu
\Big\{\ind\Big(y^{(2)}_{i_1},y^{(2)}_{i_3}< y^{(2)}_{i_2},y^{(2)}_{i_4}\Big)
         +\ind\Big(y^{(2)}_{i_2},y^{(2)}_{i_4}< y^{(2)}_{i_1},y^{(2)}_{i_3}\Big)\\
&\mkern125mu
         -\ind\Big(y^{(2)}_{i_1},y^{(2)}_{i_4}< y^{(2)}_{i_2},y^{(2)}_{i_3}\Big)
         -\ind\Big(y^{(2)}_{i_2},y^{(2)}_{i_3}< y^{(2)}_{i_1},y^{(2)}_{i_4}\Big)\Big\},
\end{align*}
and $m^{D}=5$, $m^{R}=6$, $m^{\tau^*}=4$. 
We will omit the superscript $\mu$ in $m^{\mu}$, $h^{\mu}$,
$h^{\mu}_{\ell}$, $\widetilde h^{\mu}_{\ell}$, and $H^{\mu}_{n,\ell}$
hereafter if no confusion is possible.

The proof is split into three steps. 
First, we prove that $F^{(a)}$ is contiguous to $F^{(0)}$ in order to employ Le~Cam's third lemma \citep[Theorem~6.6]{MR1652247}.
Next, we find the limiting null distribution of $(n\mu_n, \Lambda_n)$. %\fbox{do not have to put $^\top$ in the new notation system?}. 
Lastly, we employ Le~Cam's third lemma to deduce the alternative distribution of $(n\mu_n, \Lambda_n)$.

{\bf Step I.} In view of \citet[Sec.~3.2.1]{MR2691505}, Assumption~\ref{asp:kon} is sufficient for the contiguity: we have that $F^{(a)}$ is contiguous to $F^{(0)}$.

{\bf Step II.} Next we need to derive the limiting distribution of $(n\mu_n, \Lambda_n)$ under null hypothesis. 
To this end, we first derive the limiting null distribution of $(nH_{n,2}, \Lambda_n)$, where $H_{n,2}$ is defined in \eqref{eqn:Hdec-W}. %at the beginning of Proof of Theorem~\ref{thm:smallop}(vi).
We write by the Fredholm theory of integral equations \citep[pages~1009,~1083,~1087]{MR0188745} that 
\[
H_{n,2}=
\frac{1}{n(n-1)}\sum_{i\ne j}
  \sum_{v=1}^{\infty}
  \lambda_{v}
  \psi_{v}\Big(Y^{(1)}_{i},Y^{(2)}_{i}\Big)
  \psi_{v}\Big(Y^{(1)}_{j},Y^{(2)}_{j}\Big), 
\] 
where $\{\lambda_{v},v=1,2,\dots\}$ is an arrangement of $\{\lambda_{v_1,v_2}, v_1,v_2=1,2,\dots\}$, and $\psi_{v}$ is the normalized eigenfunction associated with $\lambda_{v}$. 
For each positive integer $K$, define the ``truncated'' U-statistic as
\[
H_{n,2,K}\equiv 
\frac{1}{n(n-1)}\sum_{i\ne j}
  \sum_{v=1}^{K}
  \lambda_{v}
  \psi_{v}\Big(Y^{(1)}_{i},Y^{(2)}_{i}\Big)
  \psi_{v}\Big(Y^{(1)}_{j},Y^{(2)}_{j}\Big).
\]
%and derive the limiting distribution of $nH_{n,2,K}$ as $n\to\infty$.
Notice that $nH_{n,2}$ and $nH_{n,2,K}$ can be written as
\begin{align*}
n H_{n,2}&=\frac{n}{n-1}\Big(\sum_{v=1}^{\infty}\lambda_{v} \Big\{n^{-1/2}\sum_{i=1}^n \psi_{v}\Big(Y^{(1)}_{i},Y^{(2)}_{i}\Big)\Big\}^2
-\sum_{v=1}^{\infty}\lambda_{v} \Big[n^{-1}\sum_{i=1}^n \Big\{\psi_{v}\Big(Y^{(1)}_{i},Y^{(2)}_{i}\Big)\Big\}^2\Big]\Big),\\
n H_{n,2,K}&=\frac{n}{n-1}\Big(\sum_{v=1}^{K}\lambda_{v} \Big\{n^{-1/2}\sum_{i=1}^n \psi_{v}\Big(Y^{(1)}_{i},Y^{(2)}_{i}\Big)\Big\}^2
-\sum_{v=1}^{K}\lambda_{v} \Big[n^{-1}\sum_{i=1}^n \Big\{\psi_{v}\Big(Y^{(1)}_{i},Y^{(2)}_{i}\Big)\Big\}^2\Big]\Big).
%\yestag\label{eq:sqsum}
\end{align*}
For a simpler presentation, let $S_{n,v}$ denote $n^{-1/2}\sum_{i=1}^n {\psi_{v}(Y^{(1)}_{i},Y^{(2)}_{i})}$ 
%and let $\widetilde R_{n,k}$ denote $n^{-1}\sum_{i=1}^n \{\psi_{v}(Y^{(1)}_{i},Y^{(2)}_{i})\}^2$ 
hereafter.

To derive the limiting null distribution of $(nH_{n,2}, \Lambda_n)$, 
we first derive the limiting null distribution of $(nH_{n,2,K}, T_n)$ for each integer $K$.
Observe that
\begin{align*}
\E  (S_{n,v}) &= 0,~~~ 
\Var(S_{n,v})  = 1,~~~
\Cov(S_{n,v}, T_n)\to d_v\Delta_0,\\
\E  (T_n)     &= 0,~~~ 
\Var(T_n)      = \cI_{\mX}(0),~~~
\end{align*}
where $d_v\equiv \Cov\{\psi_{v}(\mY),L'(\mY;0)\}$
and $0<\cI_{X}(0)<\infty$ by Assumption~\ref{asp:kon}. 
There exists at least one $v\geq 1$ such that~$d_v\neq 0$.  
Indeed, applying Theorem~4.4 and Lemma~4.2 in \citet{MR3541972} yields 
\[
\Big\{\psi_{v}\Big(\mx\Big),v=1,2,\dots\Big\}
=\Big\{\psi_{v_1}^{(1)}\Big(x^{(1)}\Big)
       \psi_{v_2}^{(2)}\Big(x^{(2)}\Big),v_1,v_2=1,2,\dots\Big\},
\]
where
\[
\psi_{v_1}^{(1)}\Big(x^{(1)}\Big)\psi_{v_2}^{(2)}\Big(x^{(2)}\Big)
\equiv2\cos\Big\{\pi v_1 F_{Y^{(1)}}\Big(x^{(1)}\Big)\Big\}
       \cos\Big\{\pi v_2 F_{Y^{(2)}}\Big(x^{(2)}\Big)\Big\}
\]
is associated with eigenvalue $\lambda^{\mu}_{v_1,v_2}$ defined in Proposition~\ref{prop:cha}.
Since 
\[
\E Y^{(k)}=\E \Big\{\rho_{Y^{(k)}}\Big(Y^{(k)}\Big)\Big\}=0,
\] 
$\{\psi_{v}(\mx),v=1,2,\dots\}$ forms a complete orthogonal basis for the family of functions of the form \eqref{eq:whynontrivial}: $d_v=0$ for all $v$ thus entails 
\[
\cI_{X}(0)=\E[\{L^{\prime}(\mY;0)\}^2]=\E\Big[\Big\{\sum_{v=1}^{\infty}d_v\psi_{v}\Big(\mY^{(1)},\mY^{(2)}\Big)\Big\}^2\Big]=\sum_{v=1}^{\infty}d_v^2=0,
\]
which contradicts  Assumption~\ref{asp:kon}(iii). 
Therefore,  $d_{v^*}\ne0$ for some $v^*$. 
Applying the multivariate central limit theorem \citep[Equation~(18.24)]{MR855460}, 
we deduce that under the null,
\[
(S_{n,1}, \dots,S_{n,K}, T_n)
\to
(\xi_{1}, \dots,\xi_{K}, V_K)~~~\text{in distribution},
\]
where
\[
(\xi_{1}, \dots,\xi_{K}, V_K)
\sim N_{K+1}\bigg(\bigg(\begin{matrix}0_K\\ 0\end{matrix}\bigg),
\bigg(\begin{matrix}\fI_K & \Delta_0\mv\\
\Delta_0\mv^{\T} & \Delta_0^2\cI\end{matrix}\bigg)\bigg) . 
\]
Here $0_K$ denotes a zero vector of dimension $K$, $\fI_K$ denotes an identity matrix of dimension $K$, $\cI$ is short for $\cI_{\mX}(0)$, and $\mv=(d_1,\dots,d_K)$.
Thus $V_K$ can be expressed as 
\[
\Big(\Delta_0^2\cI\Big)^{1/2}\Big\{\sum_{v=1}^{K}c_v\xi_{v} + c_{0,K}\xi_0\Big\},
\]
where $c_v\equiv \cI^{-1/2}d_v$, $c_{0,K}\equiv (1-\sum_{v=1}^{K}c_v^2)^{1/2}$, 
and $\xi_0$ is standard Gaussian and independent of $\xi_{1},\dots,\xi_{K}$.
Then by the continuous mapping theorem \citep[Theorem~2.3]{MR1652247} 
and Slutsky's theorem \citep[Theorem~2.8]{MR1652247}, we have under the null,
\[
(nH_{n,2,K}, T_n)
\to
\bigg(\sum_{v=1}^{K}\lambda_v\Big(\xi_{v}^2-1\Big),~
\Big(\Delta_0^2\cI\Big)^{1/2}\Big(\sum_{v=1}^{K}c_v\xi_{v} + c_{0,K} \xi_0\Big)\bigg) ~~~\text{in distribution}.
\yestag\label{eq:jointKlimit}
\]
Moreover, we claim that under the null, 
\[
(nH_{n,2}, T_n)
\to
\bigg(\sum_{v=1}^{\infty}\lambda_v\Big(\xi_{v}^2-1\Big),~
\Big(\Delta_0^2\cI\Big)^{1/2}\Big(\sum_{v=1}^{\infty}c_v\xi_{v} + c_{0,\infty}\xi_0\Big)\bigg) ~~~\text{in distribution},
\yestag\label{eq:jointlimit}
\]
with $c_{0,\infty}\equiv (1-\sum_{v=1}^{\infty}c_v^2)^{1/2}$ via the following argument. 
Denote
\begin{align*}
M_K&\equiv  \sum_{v=1}^{K}\lambda_v\Big(\xi_{v}^2-1\Big), &  
V_K&\equiv  \Big(\Delta_0^2\cI\Big)^{1/2}\Big(\sum_{v=1}^{K}c_v \xi_{v} + c_{0,K} \xi_0\Big),\\
M&\equiv  \sum_{v=1}^{\infty}\lambda_v\Big(\xi_{v}^2-1\Big), &  \text{and}~~~
V&\equiv  \Big(\Delta_0^2\cI\Big)^{1/2}\Big(\sum_{v=1}^{\infty}c_v \xi_{v} + c_{0,\infty} \xi_0\Big).
\end{align*}
To prove \eqref{eq:jointlimit}, it suffices to prove that for any real numbers $a$ and $b$, 
\[
\Big\lvert\E\Big\{\exp\Big(\sfi anH_{n,2}+\sfi bT_n\Big)\Big\} -
\E\Big\{\exp\Big(\sfi aM+\sfi bV\Big)\Big\}\Big\rvert \to 0~~~\text{as}~n\to\infty,
\yestag\label{eq:charlimit}
\]
where $\sfi$ denotes the imaginary unit. 
We have
\begin{align*}
&\Big\lvert\E\Big\{\exp\Big(\sfi anH_{n,2}+\sfi bT_n\Big)\Big\} -
\E\Big\{\exp\Big(\sfi aM+\sfi bV\Big)\Big\}\Big\rvert \\
\le\;&\Big\lvert\E\Big\{\exp\Big(\sfi anH_{n,2}+\sfi bT_n\Big)\Big\} -
\E\Big\{\exp\Big(\sfi anH_{n,2,K}+\sfi bT_n\Big)\Big\}\Big\rvert \\
&+\Big\lvert\E\Big\{\exp\Big(\sfi anH_{n,2,K}+\sfi bT_n\Big)\Big\} -
\E\Big\{\exp\Big(\sfi aM_K+\sfi bV_K\Big)\Big\}\Big\rvert \\
&+\Big\lvert\E\Big\{\exp\Big(\sfi aM_K+\sfi bV_K\Big)\Big\} -
\E\Big\{\exp\Big(\sfi aM+\sfi bV\Big)\Big\}\Big\rvert\equiv I+I\!I+I\!I\!I,~~~\text{say,}
%\le\;&\E\Big\lvert e^{it_1n(H_{n,2}-H_{n,2,K})}-1\Big\rvert
%+\Big\lvert\E\Big[e^{it_1nH_{n,2,K}+it_2T_n}\Big] -
%\E\Big[e^{it_1M_K+it_2V_K}\Big]\Big\rvert 
%+\E\Big\lvert e^{it_1(M_K-M)+it_2(V_K-V)}-1\Big\rvert\\
%\le\;&\Big\{\E\Big\lvert t_1n(H_{n,2}-H_{n,2,K}\Big\rvert^2\Big\}^{1/2}
%+\Big\lvert\E\Big[e^{it_1nH_{n,2,K}+it_2T_n}\Big] -
%\E\Big[e^{it_1M_K+it_2V_K}\Big]\Big\rvert 
%+\Big\{\E\Big\lvert t_1(M_K-M)+t_2(V_K-V)\Big\rvert^2\Big\}^{1/2}\\
%\le\;&\Big\{\frac{2nt_1^2}{n-1}\sum_{v=K+1}^{\infty}\lambda_v^2\Big\}^{1/2}
%+\Big\lvert\E\Big[e^{it_1nH_{n,2,K}+it_2T_n}\Big] -
%\E\Big[e^{it_1M_K+it_2V_K}\Big]\Big\rvert 
%+\Big\{2\Big(2t_1^2\sum_{v=K+1}^{\infty}\lambda_v^2 + 
%2\Delta_0^2t_2^2\sum_{v=K+1}^{\infty}c_v^2\Big)\Big\}^{1/2}
\end{align*}
where in view of page 82 of \citet{MR1075417} and Equation~(4.3.10) in \citet{MR1472486}, 
\begin{align*}
I 
&\le\E\Big\lvert \exp\Big\{\sfi an\Big(H_{n,2}-H_{n,2,K}\Big)\Big\}-1\Big\rvert
\le\Big\{\E\Big\lvert an\Big(H_{n,2}-H_{n,2,K}\Big)\Big\rvert^2\Big\}^{1/2}
=\Big(\frac{2na^2}{n-1}\sum_{v=K+1}^{\infty}\lambda_v^2\Big)^{1/2}%,
\end{align*}
and
\begin{align*}
I\!I\!I 
&\le\E\Big\lvert \exp\Big\{\sfi a\Big(M_K-M\Big)+\sfi b\Big(V_K-V\Big)\Big\}-1\Big\rvert
\le\Big\{\E\Big\lvert a\Big(M_K-M\Big)+b\Big(V_K-V\Big)\Big\rvert^2\Big\}^{1/2}\\
&\le\Big\{2\Big(2a^2\sum_{v=K+1}^{\infty}\lambda_v^2 + 
2b^2\Delta_0^2\cI\sum_{v=K+1}^{\infty}c_v^2\Big)\Big\}^{1/2}.
\end{align*}
Since by Remark~3.1 in \citet{MR3541972}, 
\[
\sum_{v=1}^{\infty}\lambda_v^2=
\begin{cases}
{1}/{8100}  &\text{ when }\mu=D,R,\\
{1}/{225}   &\text{ when }\mu=\tau^*,
\end{cases}
~~~\text{and}~~~
\sum_{v=1}^{\infty}c_v^2=\cI^{-1}\sum_{v=1}^{\infty}d_v^2= 1, 
\]
we conclude that, for any $\epsilon>0$, there exists $K_0$ such that $I<\epsilon/3$ and $I\!I\!I<\epsilon/3$ for all $n$ and all $K\ge K_0$. For this $K_0$, we have $I\!I<\epsilon/3$ for all sufficiently large $n$ by \eqref{eq:jointKlimit}. These together prove \eqref{eq:charlimit}.
We also have, using the idea from \citet[p.~210--214]{MR0229351}
(see also \citealp[Appendix~B]{MR2691505}), that under the null
\[\Lambda_n-T_n+\Delta_0^2\cI/2\to 0 ~~~\text{in probability}.
\yestag\label{eq:firstapprox}\]
Combining \eqref{eq:jointlimit} and \eqref{eq:firstapprox} yields that under the null, 
\[
(nH_{n,2}, \Lambda_n)
\to
\bigg(\sum_{v=1}^{\infty}\lambda_v\Big(\xi_{v}^2-1\Big),~
\Big(\Delta_0^2\cI\Big)^{1/2}\Big(\sum_{v=1}^{\infty}c_v\xi_{v} + c_{0,\infty}\xi_0\Big)-\frac{\Delta_0^2\cI}{2}\bigg) ~~\text{in distribution}.
\yestag\label{eq:jointlimitL}
\]
Using the fact $H_{n,1}=0$ and Equation (1.6.7) in \citet[p.~30]{MR1075417} yields that $(n\mu_n, \Lambda_n)$ has the same limiting distribution as \eqref{eq:jointlimitL} under the null.

{\bf Step III.} %Noticing that
%$\widetilde H_{n,k}$, $k\in\Z_+$ are uniformly bounded by some absolute constant $b$ regardless of $k$ 
%since $\sup_v\lVert\psi_v\rVert<\infty$ \citep[Theorem~3.a.1]{MR0889455}, 
Finally employing Le Cam's third lemma \citep[Theorem~6.6]{MR1652247}
we obtain that under the local alternative
\begin{align*}
   \;& \pr\{n\mu_n\le q_{1-\alpha}\mid H_{1,n}(\Delta_0)\}\\
\to\;& \E \Big[\ind\Big\{\sum_{v=1}^{\infty}\lambda_v\Big(\xi_{v}^2-1\Big)\le q_{1-\alpha}\Big\} 
\times\exp\Big\{\Big(\Delta_0^2\cI\Big)^{1/2}\Big(\sum_{v=1}^{\infty}c_v \xi_{v} + c_{0,\infty} \xi_0\Big)-\frac{\Delta_0^2\cI}{2}\Big\}\Big]\\
\le\;& \E \Big[\ind\Big\{\Big\lvert \xi_{v^*}\Big\rvert\le \Big(\frac{q_{1-\alpha}+\sum_{v=1}^{\infty}\lambda_v}{\lambda_{v^*}}\Big)^{1/2}\Big\} 
\times\exp\Big\{\Big(\Delta_0^2\cI\Big)^{1/2}\Big(\sum_{v=1}^{\infty}c_v \xi_{v} + c_{0,\infty} \xi_0\Big)-\frac{\Delta_0^2\cI}{2}\Big\}\Big]\\
=\;& \E \Big[\ind\Big\{\Big\lvert \xi_{v^*}\Big\rvert\le \Big(\frac{q_{1-\alpha}+\sum_{v=1}^{\infty}\lambda_v}{\lambda_{v^*}}\Big)^{1/2}\Big\} 
\times\exp\Big\{\Big(\Delta_0^2\cI\Big)^{1/2}\Big(c_{v^*} \xi_{v^*} + (1-c_{v^*}^2)^{1/2} \xi_0\Big)-\frac{\Delta_0^2\cI}{2}\Big\}\Big]\\
=\;& \Phi\Big\{\Big(\frac{q_{1-\alpha}+\sum_{v=1}^{\infty}\lambda_v}{\lambda_{v^*}}\Big)^{1/2} -c_{v^*}\Big(\Delta_0^2\cI\Big)^{1/2}\Big\} 
- \Phi\Big\{-\Big(\frac{q_{1-\alpha}+\sum_{v=1}^{\infty}\lambda_v}{\lambda_{v^*}}\Big)^{1/2} -c_{v^*}\Big(\Delta_0^2\cI\Big)^{1/2}\Big\}\\
\le\;& 2\Big(\frac{q_{1-\alpha}+\sum_{v=1}^{\infty}\lambda_v}{\lambda_{v^*}}\Big)^{1/2} 
\varphi\Big\{\Big\lvert c_{v^*}\Big\rvert\Big(\Delta_0^2\cI\Big)^{1/2}-\Big(\frac{q_{1-\alpha}+\sum_{v=1}^{\infty}\lambda_v}{\lambda_{v^*}}\Big)^{1/2}\Big\},
\yestag\label{eq:linshi}
\end{align*}
for some $v^*$ such that  $c_{v^*}=\cI^{-1/2}d_{v^*}\ne0$ and
\[
\Delta_0 \ge \Big\lvert c_{v^*}\Big\rvert^{-1}\cI^{-1/2} \Big(\frac{q_{1-\alpha}+\sum_{v=1}^{\infty}\lambda_v}{\lambda_{v^*}}\Big)^{1/2},
\yestag\label{eq:bietai}
\]
where $\Phi$ and $\varphi$ are the distribution function and density function of the standard normal distribution, respectively. 
%The right-hand side of \eqref{eq:linshi} can be arbitrarily small as long as $\lvert\Delta_0\rvert$ is large enough, regardless of whether $c_{v^*}$ is positive or negative. 
Note that the right-hand side of \eqref{eq:linshi} is monotonically decreasing as $\Delta_0$ increases given \eqref{eq:bietai}. 
There exists a positive constant $C_\beta$ such that \eqref{eq:linshi} is smaller than $\beta/2$ as long as $\Delta_0\ge C_\beta$, regardless of whether $c_{v^*}$ is positive or negative. This concludes the proof.

{\bf (B)} {\it This proof uses Assumption~\ref{asp:far}\ref{asp:far1},\ref{asp:far4},\ref{asp:far5}.}
Let $\mY_{i}=(Y^{(1)}_{i},Y^{(2)}_{i})$, $i=1,\dots,n$ be independent copies of $\mY$ (distributed as $\mX$ with $\Delta=0$).
%Let $F^{(0)}$ and $F^{(a)}$ be the (joint) distribution functions of $(\mY_{1},\dots,\mY_{n})$ and $(\mX_{1},\dots,\mX_{n})$, respectively. %, and define $\Lambda_n\equiv \log({\d F^{(a)}}/{\d F^{(0)}})$.
%Let $f_0$ and $g$ denote density functions of $F_0$ and $G$, respectively. 
Denote
\[
L(\mx;\Delta)\equiv \frac{f_{\mX}(\mx;\Delta)}{f_{\mX}(\mx;0)},~~~
L'(\mx;\Delta)\equiv \frac{\partial}{\partial\Delta}L(\mx;\Delta), 
\]
and define $\Lambda_n=\sum_{i=1}^{n}\log L(\mY_i;\Delta_n)$ and $T_n\equiv \Delta_n\sum_{i=1}^{n}L'(\mY_{i};0)$. Direct computation yields 
\[
L(\mx;\Delta)\equiv \frac{(1-\Delta)f_{0}(\mx)+\Delta g(\mx)}{f_0(\mx)},~~~
L'(\mx;0)=\frac{g(\mx)-f_0(\mx)}{f_0(\mx)},
\]
and thus
\begin{align*}
\cI_{\mX}(0)&=\E[\{L'(\mY;0)\}^2]=\E[\{g(Y)/f_0(Y)-1\}^2]\\
&=\E[\{s(Y)\}^2]=\chi^2(G,F_0)\equiv \int (\d G/\d F_0-1)^2 \d F_0.
\end{align*}
%Notice also 
%since $g(Y)/f_0(Y)=\{g_1(Y^{(1)})/f_1(Y^{(1)})\}\cdot\{g_2(Y^{(2)})/f_2(Y^{(2)})\}$, $\cI_{\mX}(0)<\infty$ implies that $\E[\{g_k(Y^{(k)})/f_k(Y^{(k)})\}^2]<\infty$ for $k=1,2$. 
%It holds by central limit theorem that $T_n$ is asymptotically normal with mean $0$ and variance $\Delta_0^2\cI_{\mX}(0)=\Delta_0^2\chi^2(G,F_0)$.

This is similar to the proof for family (A). The only difference lies in
proving the existence of at least one $v\geq 1$ such that~$d_v\neq 0$, 
where $d_v\equiv \Cov[\psi_{v}(\mY),L'(\mY;0)]$. 
Now $L'(x;0)=s(x)$ is not of the form \eqref{eq:whynontrivial}, and 
$\{\psi_{v}(\mx),v=1,2,\dots\}$ does not necessarily form a complete orthogonal basis for the family of functions of $s(x)$. 
However, recall that
\[
\Big\{\psi_{v}\Big(\mx\Big),v=1,2,\dots\Big\}
=\Big\{\psi_{v_1}^{(1)}\Big(x^{(1)}\Big)
       \psi_{v_2}^{(2)}\Big(x^{(2)}\Big),v_1,v_2=1,2,\dots\Big\},
\]
where
\[
\psi_{v_1}^{(1)}\Big(x^{(1)}\Big)\psi_{v_2}^{(2)}\Big(x^{(2)}\Big)
\equiv2\cos\Big\{\pi v_1 F_{Y^{(1)}}\Big(x^{(1)}\Big)\Big\}
       \cos\Big\{\pi v_2 F_{Y^{(2)}}\Big(x^{(2)}\Big)\Big\}.
\]
Since
\[
\Big\{\psi_{v_1}^{(1)}\Big(x^{(1)}\Big)
      \psi_{v_2}^{(2)}\Big(x^{(2)}\Big),v_1,v_2=0,1,2,\dots\Big\}
\]
forms a complete orthogonal basis of the set of square integrable functions, $d_v=0$ for all $v\ge1$ thus entails $s(x)=h_1(x^{(1)})+h_2(x^{(2)})$ for some functions $h_1,h_2$, where $h_k(x^{(k)})$ depends only on $x^{(k)}$ for $k=1,2$. This contradicts Assumption~\ref{asp:far}\ref{asp:far4}. 
%
%
%
%
%
%
%
%{\bf (C)} {\it This proof uses Assumption~\ref{asp:gof}\ref{asp:gof1},\ref{asp:gof4},\ref{asp:gof5}.}
%The arguments are the same as for family (B) and hence omitted.  
%%by noticing $L'(\mx;0)=\Psi_1(x^{(1)})\Psi_2(x^{(2)})$ with 
%%$\E\{\Psi_1(X^{(1)})\}=\E\{\Psi_2(X^{(2)})\}=0$. 
\end{proof}

\subsection{Proof of Proposition~\ref{thm:opt}}

\begin{proof}[Proof of Proposition~\ref{thm:opt}]
{\bf (A)} {\it This proof uses all of Assumption~\ref{asp:kon}.}
Let $\mY_{i}=(Y^{(1)}_{i},Y^{(2)}_{i})$ and $\mX_{i}=(X^{(1)}_{i},X^{(2)}_{i})$, $i=1,\dots,n$ be independent copies of $\mY$ and $\mX$ with $\Delta=\Delta_n=n^{-1/2}\Delta_0$, respectively. 
Let $F^{(0)}$ and $F^{(a)}$ be the (joint) distribution functions of $(\mY_{1},\dots,\mY_{n})$ and $(\mX_{1},\dots,\mX_{n})$, respectively, 
and let $F^{(0)}_{i}$ and $F^{(a)}_{i}$ be the distribution functions of $\mY_{i}$ and $\mX_{i}$, respectively. 

The total variation distance between two distribution functions $G$ and $F$ on the same real probability space is defined as 
\[
\TV(G,F)\equiv\sup_{A}\Big|\pr_{G}(A)-\pr_{F}(A)\Big|,
\]
where $A$ is taken over the Borel field and $\pr_{G},\pr_{F}$ are respective probability measures induced by $G$ and $F$. Furthermore, if $G$ is absolutely continuous with respect to $F$, the Hellinger distance between $G$ and $F$ is defined as
\[
\HL(G,F)\equiv\Big[\int 2\Big\{1-({\d G}/{\d F})^{1/2}\Big\}\d F\Big]^{1/2}.
\]
By Assumption~\ref{asp:kon}\ref{asp:kon1}, $\HL(F^{(a)},F^{(0)})$ is well-defined. 
It suffices to prove that for any small $0<\beta<1-\alpha$, there exists $\Delta_0=c_\beta$ such that, for all sufficiently large $n$, $\TV(F^{(a)},F^{(0)})<\beta$, which is implied by 
$\HL(F^{(a)},F^{(0)})<\beta$ using the relation \citep[Equation~(2.20)]{MR2724359}
\[\TV\Big(F^{(a)},F^{(0)}\Big)\le\HL\Big(F^{(a)},F^{(0)}\Big).\]
We also know that \citep[page~83]{MR2724359}
\[
1-\frac{1}{2}\HL^2\Big(F^{(a)},F^{(0)}\Big)=\prod_{i=1}^{n}\Big\{1-\frac{1}{2}\HL^2\Big(F^{(a)}_{i},F^{(0)}_{i}\Big)\Big\}.
\]
We then aim to evaluate $\HL^2(F^{(a)},F^{(0)})$ in terms of $\cI_{\mX}(0)$ and $\Delta_0$. By definition,
\[
\frac12\HL^2\Big(F^{(a)}_{i},F^{(0)}_{i}\Big)
=\E\Big[1-\Big\{L\Big(\mY_{i};\Delta_n\Big)\Big\}^{1/2}\Big].
\]
Given Assumption~\ref{asp:kon}, we deduce in view of \citet[Appendix~B]{MR2691505} that
\[
n\E\Big[1-\Big\{L\Big(\mY_{i};\Delta_n\Big)\Big\}^{1/2}\Big]
=\E\Big(\sum_{i=1}^{n}\Big[1-\Big\{L\Big(\mY_{i};\Delta_n\Big)\Big\}^{1/2}\Big]\Big)
\to \frac{\Delta_0^2\cI_{\mX}(0)}{8}.
\]
Therefore, 
\[
1-\frac{1}{2}\HL^2\Big(F^{(a)},F^{(0)}\Big) \to \exp\Big\{-\frac{\Delta_0^2\cI_{\mX}(0)}{8}\Big\}.
\]
The desired result follows by taking $c_{\beta}>0$ such that
\[
\exp\Big\{-\frac{c_{\beta}^2\cI_{\mX}(0)}{8}\Big\}=1-\frac{\beta^2}{8}.
\]

{\bf (B)} {\it This proof requires Assumption~\ref{asp:far}\ref{asp:far1},\ref{asp:far5}.}
This is similar to the proof for family (A), but here we will use the relation \citep[Equation~(2.27)]{MR2724359}
\[
\TV\Big(F^{(a)},F^{(0)}\Big)\le\Big\{\chi^2\Big(F^{(a)},F^{(0)}\Big)\Big\}^{1/2},
\]
where the chi-square distance between two distribution functions $G$ and $F$ on the same real probability space such that $G$ is absolutely continuous with respect to $F$ is defined as 
\[
\chi^2(G,F)\equiv \int \Big(\d G/\d F-1\Big)^2 \d F.
\] 
Here $\chi^2(F^{(a)},F^{(0)})$ is well-defined by Assumption~\ref{asp:far}\ref{asp:far1}. 
We also know that \citep[page~86]{MR2724359}
\[
1+\chi^2\Big(F^{(a)},F^{(0)}\Big)
=\prod_{i=1}^{n}\Big\{1+\chi^2\Big(F^{(a)}_{i},F^{(0)}_{i}\Big)\Big\}.
\]
Next we aim to evaluate $\chi^2(F^{(a)},F^{(0)})$ in terms of
$\cI_{\mX}(0)=\chi^2(G,F_0)$ and $\Delta_0$. Here
$0<\cI_{\mX}(0)<\infty$ by
Assumption~\ref{asp:far}\ref{asp:far1},\ref{asp:far5}. We have by
definition that
\[
\chi^2\Big(F^{(a)}_{i},F^{(0)}_{i}\Big)=\chi^2\Big((1-\Delta_n)F_{0}+\Delta_n G, F_{0}\Big)=\Delta_n^2\chi^2(G,F_0)=n^{-1}\Delta_0^2\chi^2(G,F_0).
\]
Therefore, it holds that
\[
1+\chi^2\Big(F^{(a)},F^{(0)}\Big) \to \exp\Big\{\Delta_0^2\chi^2\Big(G,F_0\Big)\Big\}.
\]
The desired result follows by taking $c_{\beta}>0$ such that
\[
\exp\Big\{c_{\beta}^2\chi^2\Big(G,F_0\Big)\Big\}=1+\frac{\beta^2}{4}.
\]
%
%
%
%
%
%
%
%
%
%
%
%
%{\bf (C)} {\it This proof requires Assumption~\ref{asp:gof}\ref{asp:gof1},\ref{asp:gof5}.} 
%We omit any details as the arguments are exactly the same as in the proof for family (B).
%%This is also similar to the proof of Theorem~\ref{thm:opt-kon}, but here we will use 
%%using relation $\TV(F^{(a)},F^{(0)})\le\{\chi^2(F^{(a)},F^{(0)})\}^{1/2}$ \citep[Equation~(2.27)]{MR2724359}. We have also known that \citep[p.~86]{MR2724359}
%%\[
%%1+\chi^2(F^{(a)},F^{(0)})=\prod_{i=1}^{n}\Big(1+\chi^2(F^{(a)}_{i},F^{(0)}_{i})\Big).
%%\]
%%Next we aim to evaluate $\chi^2(\Q_{n},\P_{n})$ in terms of $\chi^2(G,F_0)$ and $\Delta_0$. We have by definition
%%\[
%%\chi^2(F^{(a)}_{i},F^{(0)}_{i})=\chi^2\Big((1-\Delta_n)F_{0}+\Delta_n G, F_{0}\Big)=\Delta_n^2\chi^2(G,F_0)=n^{-1}\Delta_0^2\chi^2(G,F_0).
%%\]
%%Therefore, we have
%%\[
%%1+\chi^2(F^{(a)},F^{(0)}) \to \exp\Big\{\Delta_0^2\chi^2(G,F_0)\Big\},
%%\]
%%and the result follows. 
This completes the proof.
\end{proof}

\section*{Acknowledgement}

We thank Holger Dette for pointing out related literature, and Sky Cao, Peter Bickel, and Bodhisattva Sen for many helpful comments. We thank the editor, the associate editor, and three anonymous referees for their very detailed and constructive comments and suggestions, which greatly helped improve the paper quality.
%This work was partially supported by NSF Grant SES-2019363 and xxx Grant XXX XXX.
This project has received funding support from the United States NSF Grant SES-2019363 
and European Research Council (ERC) 
under the European Union's Horizon 2020 research 
and innovation programme (grant agreement No 883818).

%\newpage
{%\small
\bibliographystyle{apalike}
\bibliography{AMS}
}

\end{document}